\let\amsamp=&
\newcommand{\colim@}[2]{%
  \vtop{\m@th\ialign{##\cr
    \hfil$#1\operator@font colim$\hfil\cr
    \noalign{\nointerlineskip\kern1.5\ex@}#2\cr
    \noalign{\nointerlineskip\kern-\ex@}\cr}}%
}
\newcommand{\colim}{%
  \mathop{\mathpalette\colim@{\rightarrowfill@\scriptscriptstyle}}\nmlimits@
}
\renewcommand{\varprojlim}{%
  \mathop{\mathpalette\varlim@{\leftarrowfill@\scriptscriptstyle}}\nmlimits@
}
\renewcommand{\varinjlim}{%
  \mathop{\mathpalette\varlim@{\rightarrowfill@\scriptscriptstyle}}\nmlimits@
}
\newtheorem{thm}{Theorem}[section]
\newtheorem{lem}[thm]{Lemma}
\newtheorem{prop}[thm]{Proposition}
\newtheorem{cor}[thm]{Corollary}
\newtheorem{definition}[thm]{Definition}
\theoremstyle{remark}
\newtheorem{rem}[thm]{Remark}
\newtheoremstyle{gabber}
   {6pt}                    
    {0pt}                    
    {\normalfont}                   
    {}                           
    {\normalfont}                   
    {.}                          
    {0.5em}                       
    {}  
\theoremstyle{gabber}
\newtheorem{sect}[thm]{}
\newcommand{\im}{\text{Im}}
\renewcommand{\ker}{\text{Ker}}
\newcommand{\ideal}[1]{\mathfrak{#1}}
\renewcommand{\lim}{\varprojlim}
\newcommand{\inj}{\colim}
\renewcommand{\cal}[1]{\mathcal{#1}}
\renewcommand{\mapsto}{\longmapsto}
\renewcommand{\to}{\longrightarrow}
\newcommand{\spec}{\text{Spec}}
\newcommand{\coker}{\text{Coker}}
\newcommand{\tor}[4]{\text{Tor}_{#1}^{#2}(#3, #4)}
\newcommand{\ext}[4]{\text{Ext}_{#2}^{#1}(#3, #4)}
\newcommand{\alext}[4]{\text{alExt}_{#2}^{#1}(#3, #4)}
\renewcommand{\hom}[3]{\text{Hom}_{#1}(#2, #3)}
\newcommand{\alhom}[3]{\text{alHom}_{#1}(#2, #3)}
\renewcommand{\cal}[1]{\mathcal{#1}}
\renewcommand{\tilde}[1]{\widetilde{#1}}
\numberwithin{equation}{subsection}
\title{Almost Witt vectors}
\author{Ivan Zelich}
\date{}
\begin{document}
\maketitle
\begin{abstract}
Our main goal in this paper is to prove results for Witt vectors in the almost category. We finish with an application to \textit{almost purity}, Theorem~\ref{thm:almpurchar0}.
\end{abstract}
\tableofcontents
\newpage
\section{Introduction} 
\begin{sect}Almost mathematics originated from the work of Faltings \cite{gerd} who took inspiration from Tate's work on ramification theory \cite{tatepdiv}. Faltings considered rings such $A_0 := \mathbb{Z}_p[T_1,...,T_n]$ and ramified extensions $A_k := \mathbb{Z}_p[p^{1/p^{k}}, T_1^{1/p^{k}}, ..., T_n^{1/p^{k}}]$. If we take a finite normal $A_0$-algebra $B_0$, supposing that it is finite \'{e}tale after inverting $p$, then in general $B_0$ could have some ramification in the special fibre above $p$. Motivated by Zariski-Negata purity, Faltings then considered the localised situation: $A_0':=\mathbb{Z}_p[T_1^{\pm 1},...,T_n^{\pm 1}]$ and $B_0'$ a finite normal algebra over $A_0'$ that is finite etale along the generic fibre. After constructing $A'_k$ and $B'_k$ analogously, the point is that the infinite extension $B'_{\infty} := \cup_{k} B'_k$ will be almost unramified over $A'_{\infty}:= \cup_k A'_k$ with respect to $(p^{1/p^{\infty}})$, and thus almost finite \'{e}tale. This result has since been called an `almost purity' theorem, but in the way that we have framed it, it could be seen also as a variant of Abhyankar's lemma for wild ramification, in the sense that adjoining enough $p^{\text{th}}$-power roots kills \text{almost all} ramification.\end{sect}
\begin{sect}Let us now recall some main constructs for almost mathematics. First, one fixes an almost setting $(R, \ideal{m})$ consisting of a ring $R$ with an ideal $\ideal{m} \subset R$ where $\ideal{m}^2 = \ideal{m}$.\footnote{One seemingly artificial condition that can be given to $\ideal{m}$ is that $\tilde{\ideal{m}} := \ideal{m} \otimes_R \ideal{m}$ be an $R$-flat module, and as shown in \cite[Remark 2.1.4]{gabram}, this condition is preserved by base-change. However, the results of thesis do not rely on this hypothesis\textemdash see \ref{dercatalmmod} below. Condition (B) holds under this flatness condition \cite[Proposition 2.1.7]{gabram}.} In many situations, for any integer $k>1$, the $k^{\text{th}}$ power elements of $\ideal{m}$ generate $\ideal{m}$, which is called Condition (B) in \cite{gabram}\textemdash we will assume $\ideal{m}$ has this property. An $R$-module $N$ is called \textit{almost zero} if $\ideal{m}N = 0$, and one checks that this property is closed under extensions, thus ensuring that the subcategory of almost zero modules forms a Serre subcategory. Briefly, if we have an exact sequence:
\[\begin{tikzcd}
0 \arrow[r]& M_1 \arrow[r]& M \arrow[r]& M_2 \arrow[r] & 0
\end{tikzcd}\]
and $M$ is almost zero, then it's clear that $M_1$ and $M_2$ are almost zero too. For the converse, $\ideal{m}\cdot M$ is in $M_1$ since $M_2$ is almost zero, so $\ideal{m} \cdot M = \ideal{m}^2 \cdot M \subset \ideal{m} \cdot M_1= 0$ since $M_1$ is almost zero.\\
\indent We call a morphism $f: M\to N$ an \textit{almost isomorphism} if $\ideal{m} \cdot \ker(f) = \ideal{m} \cdot \coker(f) =0$. Set $\tilde{\ideal{m}}:= \ideal{m} \otimes_R \ideal{m}$. \end{sect}
\begin{lem}\label{lem:almzero}
\begin{enumerate}[(i)]
\item An $R$-module $M$ is almost zero if and only if $\ideal{m} \otimes_R M \simeq 0$.
\item $f: M \to N$ is an almost isomorphism if and only if the induced morphism $ \tilde{\ideal{m}} \otimes_R M \to \tilde{\ideal{m}} \otimes_R N$ is an isomorphism.
\end{enumerate}
\end{lem}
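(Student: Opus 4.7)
For (i), the implication $\ideal{m} \otimes_R M = 0 \Rightarrow \ideal{m} M = 0$ is immediate from the multiplication map $\ideal{m} \otimes_R M \to M$, whose image is $\ideal{m} M$. For the converse, the idempotence $\ideal{m}^2 = \ideal{m}$ lets me write any $a \in \ideal{m}$ as $\sum b_i c_i$ with $b_i, c_i \in \ideal{m}$, so $a \otimes m = \sum b_i \otimes c_i m = 0$ whenever $\ideal{m} M = 0$. Part (ii) will bootstrap from (i) by careful tensoring around the canonical factorization $f : M \twoheadrightarrow \im(f) \hookrightarrow N$.

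For the forward direction of (ii), set $K = \ker(f)$, $I = \im(f)$, $C = \coker(f)$, with $K$ and $C$ almost zero by hypothesis. Tensoring $0 \to K \to M \to I \to 0$ with $\ideal{m}$ and killing $\ideal{m} \otimes K$ via (i) yields $\ideal{m} \otimes M \simeq \ideal{m} \otimes I$. Tensoring $0 \to I \to N \to C \to 0$ with $\ideal{m}$ makes $\ideal{m} \otimes I \to \ideal{m} \otimes N$ surjective with kernel a quotient of $\tor{1}{R}{\ideal{m}}{C}$; this Tor is annihilated by $\ideal{m}$ because $\ideal{m}$ annihilates $C$ (multiplication by $a \in \ideal{m}$ acts through either tensor argument), so the kernel is almost zero. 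A second tensoring by $\ideal{m}$ kills it via (i), yielding $\tilde{\ideal{m}} \otimes I \simeq \tilde{\ideal{m}} \otimes N$; composition with $\tilde{\ideal{m}} \otimes M \simeq \tilde{\ideal{m}} \otimes I$ finishes this direction.

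For the converse, factor $f = g \circ h$ with $h$ surjective and $g$ injective. A short diagram chase (right-exactness makes $\tilde{\ideal{m}} \otimes h$ surjective, and bijectivity of $\tilde{\ideal{m}} \otimes f$ then forces $\tilde{\ideal{m}} \otimes g$ to be bijective, hence so is $\tilde{\ideal{m}} \otimes h$) shows both middle maps become isomorphisms after $\tilde{\ideal{m}}\otimes-$. The cokernel half is then easy: right-exactness gives $\tilde{\ideal{m}} \otimes C = 0$, and the multiplication map $\tilde{\ideal{m}} \otimes C \to C$ has image $\ideal{m}^2 C = \ideal{m} C$, so $\ideal{m} C = 0$. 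For the kernel, exploit that $K \hookrightarrow M \twoheadrightarrow I$ is the zero map: tensoring with $\tilde{\ideal{m}}$ and using injectivity of $\tilde{\ideal{m}} \otimes h$ shows the induced map $\tilde{\ideal{m}} \otimes K \to \tilde{\ideal{m}} \otimes M$ vanishes. Then $b \otimes c \otimes k = 0$ in $\tilde{\ideal{m}} \otimes M$ for all $b, c \in \ideal{m}$ and $k \in K$; pushing through $\tilde{\ideal{m}} \otimes M \to M$, $a \otimes b \otimes m \mapsto abm$, yields $bck = 0$ in $M$, and $\ideal{m}^2 = \ideal{m}$ upgrades this to $\ideal{m} K = 0$.

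The main obstacle is the kernel half of the converse: the naive hope that $\tilde{\ideal{m}} \otimes K = 0$ runs into a potential $\tor{1}{R}{\tilde{\ideal{m}}}{I}$ term that cannot be killed without flatness of $\tilde{\ideal{m}}$. The workaround is to argue at the level of morphisms---showing the map $\tilde{\ideal{m}} \otimes K \to \tilde{\ideal{m}} \otimes M$ is the zero map rather than trying to vanish its domain---and then transport vanishing back to $M$ via the multiplication map, where $\ideal{m}^2 = \ideal{m}$ closes the argument.
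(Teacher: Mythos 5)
Your proof is correct and follows essentially the same route as the paper's: part (i) via idempotence of $\ideal{m}$, the forward half of (ii) via the two short exact sequences through $\im(f)$ together with the annihilation of $\tor{1}{R}{\ideal{m}}{\coker(f)}$, and the converse via right-exactness for the cokernel and, for the kernel, killing the elements $b \otimes c \otimes k$ in $\tilde{\ideal{m}} \otimes_R M$ and pushing them through the multiplication map with $\ideal{m}^2 = \ideal{m}$. The only cosmetic difference is that you route the kernel step through injectivity of $\tilde{\ideal{m}} \otimes h$ for the factorization $f = g \circ h$, whereas the paper uses injectivity of $\tilde{\ideal{m}} \otimes f$ directly; both close the argument identically.
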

\begin{proof}See~\cite[Remark 2.1.4]{gabram}.
\end{proof}
\begin{rem}\label{rem:iter}
We note that the natural inclusion $\ideal{m} \hookrightarrow R$ is an almost isomorphism since the kernel and cokernel are trivially killed by $\ideal{m}$. As a direct result of (ii), it follows that $\ideal{m} \otimes_R \tilde{\ideal{m}} \to \tilde{\ideal{m}}$ is an isomorphism\textemdash in other words, repetitive tensoring $\ideal{m} \otimes_R \ideal{m} \otimes_R ...$ stops after the first iteration. Crucially, $\tilde{\ideal{m}} \otimes_R \tilde{\ideal{m}} \simeq \tilde{\ideal{m}}$.
\end{rem}
\begin{sect}Formally, the almost category $R^a\text{-Mod}$ is then the quotient of $R\text{-Mod}$ by the Serre subcategory of almost zero modules, which is an abelian category itself that is equipped with a localisation functor $(-)^a: R\text{-Mod} \to R^a\text{-Mod}$. \cite[02MN]{stacks} One typically constructs $R^a\text{-Mod}$ as the localised category at the multiplicative system of \text{almost isomorphisms}, and with this characterisation, we may understand the morphisms in $R^a\text{-Mod}$ via a calculus of fractions.\end{sect}
\begin{lem}\label{lem:initalm}
We have $\hom{R^a}{M^a}{N^a} = \hom{R}{\tilde{\ideal{m}} \otimes_R M}{N}$.
\end{lem}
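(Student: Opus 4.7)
The plan is to exhibit a direct bijection $\Phi$ from the right-hand side to the left-hand side. The key preliminary is that the multiplication map $\mu_M : \tilde{\ideal{m}} \otimes_R M \to M$ is an almost isomorphism for every $R$-module $M$. To see this, first observe that $\tilde{\ideal{m}} \to R$ is itself an almost isomorphism: its cokernel is $R/\ideal{m}$ (killed by $\ideal{m}$), and its kernel is $\tor{1}{R}{R/\ideal{m}}{\ideal{m}}$ (killed by $\ideal{m}$ by Remark~\ref{rem:homalmzero}). By Lemma~\ref{lem:almzero}(ii) this yields $\tilde{\ideal{m}} \otimes_R \tilde{\ideal{m}} \simeq \tilde{\ideal{m}}$ (cf.\ Remark~\ref{rem:iter}); tensoring further with $M$ shows $\tilde{\ideal{m}} \otimes \mu_M$ is an isomorphism, whence Lemma~\ref{lem:almzero}(ii) in reverse gives $\mu_M$ almost iso. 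Since $\mu_M^a$ is therefore an isomorphism in $R^a\text{-Mod}$, I define $\Phi(g) := g^a \circ (\mu_M^a)^{-1}$.

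For injectivity of $\Phi$: if $\Phi(g) = 0$ then $g^a = 0$, i.e.\ $\ideal{m} \cdot \im(g) = 0$. On a simple tensor $(a_1 \otimes a_2) \otimes m \in (\ideal{m} \otimes_R \ideal{m}) \otimes_R M$, Condition~(B) lets us write $a_2 = \sum_j b_j c_j$ with $b_j, c_j \in \ideal{m}$, so
\[
(a_1 \otimes a_2) \otimes m = \sum_j b_j \cdot \bigl((a_1 \otimes c_j) \otimes m\bigr)
\]
in $\tilde{\ideal{m}} \otimes_R M$. Pulling each $b_j \in \ideal{m}$ through $g$ and using $\ideal{m} \cdot \im(g) = 0$ gives $g((a_1 \otimes a_2) \otimes m) = 0$; by linearity $g = 0$.

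For surjectivity I invoke the calculus of fractions: any $f : M^a \to N^a$ is represented by a roof $M \xleftarrow{s} M' \xrightarrow{h} N$ with $s$ an almost isomorphism, and $\tilde{\ideal{m}} \otimes s$ is then a genuine isomorphism by Lemma~\ref{lem:almzero}(ii). Set $\phi := \mu_{M'} \circ (\tilde{\ideal{m}} \otimes s)^{-1} : \tilde{\ideal{m}} \otimes_R M \to M'$ and $g := h \circ \phi$. Naturality of $\mu$ gives $s \circ \phi = \mu_M \circ (\tilde{\ideal{m}} \otimes s) \circ (\tilde{\ideal{m}} \otimes s)^{-1} = \mu_M$, and $h \circ \phi = g$ by construction, so $\phi$ witnesses the equivalence of roofs $(\mu_M, g) \sim (s, h)$, giving $\Phi(g) = f$.

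The main obstacle I anticipate is injectivity, which relies crucially on the $\ideal{m}$-divisible nature of $\tilde{\ideal{m}} \otimes_R M$ together with Condition~(B). Intuitively it is precisely this property that makes $\tilde{\ideal{m}} \otimes_R M$—rather than $M$ itself—the correct model for the source in the almost $\hom$: it guarantees that no non-zero morphism out of it can have almost-zero image, so the localisation functor $(-)^a$ does not collapse any of its morphisms to $N$. This also sidesteps the subtlety that on iterated tensor products the maps $\mu_{\tilde{\ideal{m}} \otimes M}$ and $\tilde{\ideal{m}} \otimes \mu_M$ differ by a swap, which would otherwise obstruct a naive two-sided-inverse argument.
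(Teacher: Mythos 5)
Your proof is correct and takes essentially the same route as the paper: both arguments rest on the calculus of fractions together with the two key facts that $\mu_M : \tilde{\ideal{m}} \otimes_R M \to M$ is an almost isomorphism and that $\ideal{m} \cdot (\tilde{\ideal{m}} \otimes_R M) = \tilde{\ideal{m}} \otimes_R M$ — your injectivity step is the paper's uniqueness argument for the factorisation $\psi$, and your surjectivity construction $\phi = \mu_{M'} \circ (\tilde{\ideal{m}} \otimes s)^{-1}$ is precisely the paper's ``initial'' map built from the inverse of $\tilde{\ideal{m}} \otimes_R s$. One cosmetic remark: to write $a_2 = \sum_j b_j c_j$ you only need the idempotency $\ideal{m}^2 = \ideal{m}$, not Condition (B).
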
 
\begin{proof}
See~\cite[2.2.2]{gabram}.
\end{proof}
Combining this with Lemma~\ref{lem:almzero}, we see that if $f: M \to N$ is an almost isomorphism, then the induced morphism $f^a: M^a \to N^a$ is an isomorphism in the almost category. Moreover, an isomorphism $f: M^a \to N^a$in $R^a\text{-mod}$ can be represented uniquely by a morphism $\tilde{\ideal{m}} \otimes_R M \to \tilde{\ideal{m}} \otimes_R N$ in $R\text{-mod}$.\\
\indent There are two adjoints to $(-)^a$: for an almost module $M^a$ in $R^a\text{-mod}$, consider the following functors:
\begin{itemize} \item the functor of \textit{almost elements} $(-)_*: R^a\text{-mod} \to R\text{-mod}$ which takes $M^a \mapsto \hom{R}{\tilde{\ideal{m}}}{M}$. 
\item The functor $(-)_!: R^a\text{-mod} \to R\text{-mod}$ which takes $M^a \mapsto \tilde{\ideal{m}} \otimes_R M_*$. 
\end{itemize}
This is the right (resp. left adjoint) of $(-)^a$, and the counit (resp. unit) is a natual isomorphism in $R^a\text{-mod}$. \\
\indent There is a lot more foundational material for almost mathematics that can be found in \cite{gabram}, and we refer the reader there for more details. It may be useful for the reader to note while reading \cite[2.2]{gabram}, that due to Remark~\ref{rem:iter}, $\tilde{\ideal{m}}$ acts as a unit object on the essential image of $(-)_*$, so the theory of abelian tensor categories applies here, concretely. We will not discuss these foundations in this paper further, other than explaining key some key concepts.
\begin{sect}\label{fibalm} In particular, we would like a formalism for base-change in almost mathematics, which we can achieve by defining the following category $\cal{B}$ of `almost set-ups': objects are pairs $(R, \ideal{m})$ of almost set-ups, and morphisms $(R, \ideal{m}_R) \to (S, \ideal{m}_S)$ between two objects are ring homomorphisms $f: R \to S$ such that $\ideal{m}_S = f(\ideal{m}_R) \cdot S$.\footnote{This latter condition is precisely what will ensure that the almost structures after extension of scalars are compatible.} We have fibered and cofibered categories $\cal{B}\text{-Mod} \to \cal{B}$, where objects in the former category are pairs $((R, \ideal{m}_R), M)$ with $M$ an $R$-module and morphisms between $((R, \ideal{m}_R), M)$ and $((S, \ideal{m}_S), N)$ are pairs $(f,g)$ where $f: (R, \ideal{m}_R) \to (S, \ideal{m}_S)$ is a morphism in $\cal{B}$ and $g: M \to N$ is $f$-linear. We also denote $\cal{B}\text{-Alg}$ and $\cal{B}\text{-Mon}$ to be the category of algebras and non-unital monoids of $\cal{B}$, which are fibered/cofibered over $\cal{B}$.
\indent The almost isomorphisms in the fibers of $\cal{B}\text{-Mod} \to \cal{B}$ give a multiplicative system $\Sigma$ in $\cal{B}\text{-Mod}$. This allows us to form the localised category $\cal{B}^a\text{-Mod} := \Sigma^{-1}(\cal{B}\text{-Mod})$. The fibers of this localised category over the objects $(R, \ideal{m})$ are precisely the almost modules $R^a\text{-Mod}$, and similar assertions hold when restricting to $\cal{B}\text{-Alg}$ and $\cal{B}\text{-Mon}$. One can form adjoints to $(-)^a$ whose restrictions on the fibres induce the previously considered left and right adjoints $(-)_{!}$ and $(-)_*$.\\
\indent We can consider the category $\cal{B}/A$ defined to be the full subcategory of objects $(B, \ideal{m})$ where $B$ is an $A$-algebra. When $(R, \ideal{m})$ is an object of $\cal{B}/\mathbb{F}_p$, we can define a \textit{Frobenius} endomorphism $\Phi_R: (R,\ideal{m}) \to (R, \ideal{m})$ (which is a morphism in $\cal{B}$ as $\ideal{m}$ satisfies Condition (B)). For any $B \in \cal{B}$-$\text{Alg}/\mathbb{F}_p$ (resp. $\cal{B}$-$\text{Mon}/\mathbb{F}_p$), the Frobenius map induces a morphism $\Phi_B: B \to B$ over $\Phi_R$. The collection of Frobenius maps give us a natural transformation from the identity functor of $\cal{B}\text{-Alg}/\mathbb{F}_p$ (resp. $\cal{B}\text{-Mon}/\mathbb{F}_p$) to itself that induces a natural transformation on the identity functor of $\cal{B}^a\text{-Alg}/\mathbb{F}_p$ (resp. $\cal{B}\text{-Mon}/\mathbb{F}_p$). Using pull-back functors, any object of $\cal{B}\text{-Alg}$ over $R$ defines new objects $B_{(m)}$ of $\cal{B}\text{-Alg}$ $(m \in \mathbb{N})$ over $R$, where $B_{(m)} := (\Phi_R^m)_{*}(B)$.\end{sect}

\begin{sect}The main outcome of this thesis was to find an alternative proof of almost purity for perfectoid valuation rings of rank one, without relying on too much analytic contexts, ultimately in the hope of potentially extending the proof to the general case. Throughout this discussion, we fix a perfectoid ring $R$ (in mixed characteristic) with an element $\varpi \in R$ admitting a compatible system of $p$-power roots \cite[Lemma 3.2]{perfectoid}; let us recall what we mean by this:\end{sect}
\begin{definition}
A complete, non-discrete valuation ring $R$ of rank $1$ is a \textit{perfectoid valuation ring} if the Frobenius $\Phi:R/pR \twoheadrightarrow R/pR$ is surjective.
\end{definition}
\noindent Using the theory of perfectoid spaces, one can prove almost purity for arbitrary perfectoid rings via descent to perfectoid valuation rings of rank one, using the fact that the fibered category of finite \'{e}tale covers of adic spaces forms a stack~\cite[Corollary 15.7.26]{gabram2}, as done in Scholze's thesis \cite{perfectoid}. Let us finally state the theorem we want to prove:
\begin{thm}[Almost purity in Characteristic 0]\label{thminto:almpurchar0}
Fix a perfectoid valuation ring $R$ of rank one. Let $S$ be a finitely presented $R$-algebra, and suppose $S[1/p]$ is a finite \'{e}tale algebra over $R[1/p]$. Then $\cal{O}_S$, the integral closure of $R$ in $S[1/p]$, is almost finitely presented \'{e}tale over $R$.
\end{thm}
\noindent Our approach is to reframe the usual `untilting' functor from characteristic $p$ to characteristic $0$ via finite length Witt vectors. In particular, we prove the following (see Theorem~\ref{thm:wittdescent2}):
\begin{thm}\label{thminto:wittdescent}
Let $A \to B$ be a weakly \'{e}tale flat morphism of $R^a$-algebras. Then,  $W_n(f): W_n(A) \to W_n(B)$ is weakly \'{e}tale.
\end{thm}
\noindent See \ref{def:weaketale} for the appropriate definitions. Armed with this theorem, the following result isn't difficult (see Lemma~\ref{lem:absabh}):
\begin{thm}
Let $\overline{R}$ be the absolute closure of a perfectoid valuation ring $R$ of rank one. Then $\overline{R}/p^n$ is almost weakly \'{e}tale $(R/p^n, (\varpi^{1/p^{\infty}}))$-algebra for all $n \ge 1$.
\end{thm}
\noindent Almost purity then follows via descent arguments.\\
\begin{ackn}
The author is incredibly grateful to Dr James Borger and Dr Lance Gurney for the insightful discussions, support, and supervision. The author was funded by an Australian Government Research Training Program Scholarship (2020-2021) during which this paper was completed for partial fulfillment of an MPhil degree.
\end{ackn}
\section{Technical results}
\subsection{Derived categories of almost modules}\label{dercatalmmod}Here we shall extend some results from \cite[Section 2.4]{gabram} by removing the hypothesis that $\tilde{\ideal{m}}$ be a flat $R$-module. We note also that more is accomplished towards this aim in~\cite[14.1]{gabram2}.
\begin{definition}
Let $A$ be an $R^a$-algebra, and $M$ an $A$-module.
\begin{enumerate}[(i)]
\item We say that $M$ is flat (resp. faithfully flat) if the functor $N \mapsto M \otimes_A N$, from the category of $A$-modules to itself is exact (resp. exact and faithful).
\item We say that $M$ is almost projective if the functor $N \mapsto \alhom{A}{M}{N}$ is exact.
\end{enumerate}
\end{definition}
\begin{lem}\label{lem:abcatprop}
Let $A$ be an $R^a$-algebra.
\begin{enumerate}[(i)]
\item $A\text{-Mod}$ and $A\text{-Alg}$ are both complete and cocomplete, with exact colimits.
\item $(-)^a: A_*\text{-Mod} \to A\text{-Mod}$ preserves flat (resp. faithfully flat) $A$-modules, and sends projective objects to almost projective objects.
\end{enumerate}
\end{lem}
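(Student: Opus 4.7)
The plan is to leverage the chain of adjunctions $(-)_{!} \dashv (-)^a \dashv (-)_*$ (and its algebra analogue with $(-)_{!!}$), whose unit and co-unit are natural isomorphisms, together with the equivalence $A\text{-Mod} \simeq (A_*, \ideal{m}A_*)^a\text{-Mod}$ realising the almost category as the Serre quotient of the classical module category by the almost zero objects.

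For (i), I would first recall that $A_*\text{-Mod}$ and $A_*\text{-Alg}$ are complete and cocomplete. Since $(-)^a$ has both a left and a right adjoint, it preserves all limits and colimits, both for modules and for algebras. Hence, given a diagram $F: I \to A\text{-Mod}$ (or $I \to A\text{-Alg}$), I would form the limit or colimit of $(-)_* \circ F$ in the classical category and apply $(-)^a$; by the co-unit isomorphism $((-)_*)^a \simeq \id$ this yields a limit or colimit of $F$ itself. For exactness of filtered colimits, I would combine the AB5 property of $A_*\text{-Mod}$ with the exactness of the Serre-quotient functor $(-)^a$.

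For (ii), the flatness and faithful flatness claims reduce cleanly via the left adjoint $(-)_!$, which is exact since $\tilde{\ideal{m}}$ is flat. Given $0 \to N_1 \to N_2 \to N_3 \to 0$ exact in $A\text{-Mod}$, I would apply $(-)_!$ to land in $A_*\text{-Mod}$, tensor with the flat $M$, and then apply $(-)^a$; using the identity $M^a \otimes_A N \simeq (M \otimes_{A_*} N_!)^a$ (which follows from the unit isomorphism $N \simeq (N_!)^a$) one recovers the desired exact sequence in $A\text{-Mod}$. For faithful flatness, if $M^a \otimes_A N = 0$ then $M \otimes_{A_*} N_!$ is almost zero, so $M \otimes_{A_*} \ideal{m}N_! = 0$; faithful flatness of $M$ then forces $\ideal{m}N_! = 0$, hence $N_!$ is almost zero and $N = (N_!)^a = 0$.

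The main subtlety is the projectivity claim, where $(-)_*$ is only left exact. The key identity is $\alhom{A}{M^a}{N} \simeq \hom{A_*}{M}{N_*}^a$, which follows from the almost isomorphism $(M^a)_* \simeq M$ of Lemma~\ref{lem:homalmiso} (an almost isomorphism in the first slot of Hom produces an almost isomorphism of Hom modules). Given $0 \to N_1 \to N_2 \to N_3 \to 0$ exact in $A\text{-Mod}$, applying $(-)_*$ yields a four-term exact sequence $0 \to N_{1*} \to N_{2*} \to N_{3*} \to C \to 0$ with $C$ almost zero, since the surjectivity of $N_2 \to N_3$ is only witnessed after almostification. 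Applying the exact functor $\hom{A_*}{M}{-}$ and then $(-)^a$ kills the final term because $\hom{A_*}{M}{C}$ is almost zero by Remark~\ref{rem:homalmzero}, leaving precisely the short exact sequence of $\alhom{A}{M^a}{N_i}$ that certifies almost projectivity of $M^a$.
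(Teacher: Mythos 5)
Your proposal is correct in substance and, for part (i), follows the paper's own route: (co)limits are produced by applying $(-)_*$ termwise, taking the (co)limit in $A_*\text{-Mod}$ (resp.\ $A_*\text{-Alg}$) and almostifying, using $((-)_*)^a \simeq \id$; for exactness of colimits the paper reruns this construction through $(-)_!$ to get right exactness after getting left exactness from $(-)_*$, whereas you invoke AB5 upstairs together with exactness of the quotient functor, which is equally valid and arguably cleaner. The real divergence is in (ii). For flatness the paper argues without any auxiliary hypothesis: $M \otimes_{A_*} -$ is exact and preserves almost zero modules, hence descends to an exact endofunctor of the Serre quotient, which is $M^a \otimes_A -$; this uses only $\ideal{m}^2 = \ideal{m}$. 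Your route through $(-)_!$ buys the explicit identification $M^a \otimes_A N \simeq (M \otimes_{A_*} N_!)^a$, but it costs the exactness of $(-)_!$, i.e.\ the hypothesis that $\tilde{\ideal{m}}$ be flat\textemdash a hypothesis the paper's proof avoids, consistent with its later insistence that these basic homological statements need only $\ideal{m}^2=\ideal{m}$. Relatedly, in your faithful-flatness step the inference ``$M \otimes_{A_*} N_!$ almost zero, so $M \otimes_{A_*} \ideal{m}N_! = 0$'' is not immediate, since $M \otimes_{A_*} \ideal{m}N_!$ need not inject into $M \otimes_{A_*} N_!$; the quick repair is to note $\ideal{m}N_! = N_!$ (because $\ideal{m}\tilde{\ideal{m}} = \tilde{\ideal{m}}$), so that $M \otimes_{A_*} N_! = \ideal{m}\,(M \otimes_{A_*} N_!) = 0$ and faithful flatness applies directly; the paper's version of the same move is to tensor with $\ideal{m}A_*$ and use $N \simeq (N_*)^a$. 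Finally, your treatment of almost projectivity\textemdash the identification $\alhom{A}{M^a}{N} \simeq \hom{A_*}{M}{N_*}^a$ via Lemma~\ref{lem:homalmiso}, the four-term exact sequence $0 \to N_{1*} \to N_{2*} \to N_{3*} \to C \to 0$ with $C$ almost zero, and the vanishing $\ideal{m}\cdot\hom{A_*}{M}{C}=0$ from Remark~\ref{rem:homalmzero}\textemdash is correct (note the first identification also quietly uses the Ext part of that remark) and is a genuine improvement in rigor over the paper, which dismisses this assertion as clear by definition.
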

\begin{proof}
(i): We note that $A_*\text{-Mod}$ and $A_*\text{-Alg}$ are both complete and cocomplete. We show that $A\text{-Mod}$ is cocomplete, and the other assertions follow similarly. Suppose $I$ is a small indexing category and set $M := \inj_{i \in I} M(i)_*$; we see $M^a := \inj_{i \in I} M(i)$ as $(-)^a$ commutes with colimits (and limits) and the unit $(-)_*^a $ of the corresponding adjunction is naturally isomorphic to the identity.\\
\indent Note that this argument says that colimits are left exact since $(-)_*$ is; so by repeating the same argument with instead $(-)_{!}$ we get that colimits are right exact too, whence the assertion.\\
\indent (ii): Suppose that $M$ is a flat $A_*$-module i.e. the endofunctor $M \otimes_{A_*} -$ is exact. Since the aforementioned functor preserves the Serre subcategory of almost zero $A_*$-modules, $M^a \otimes_A -$ will too be an exact endofunctor on $A\text{-Mod}$. For the faithfully flat assertion, we only must show that if $M^a \otimes_{A} N \simeq 0$ then $N \simeq 0$. We're given that $M \otimes_{A_*} N_*$ is almost zero, so that $(M \otimes_{A_*} N_*) \otimes_{A_*} \ideal{m}A_* \simeq 0$. By faithful flatness, $N_* \otimes_{A_*} \ideal{m}A_* \simeq 0$, so that $N_*$ is almost zero, or indeed $N \simeq (N_*)^a \simeq 0$. The assertion for projective objects is clear by definition.
\end{proof}
Now let $B$ be any $R$-algebra; for any interval $I \subset \mathbb{N}$, we may extend $(-)^a$ termwise to a functor:
\[(-)^a: C^I(B\text{-Mod}) \to C^I(B^a\text{-Mod}).\]
We may consider the class of maps $\Sigma$ as those for which the modules $H^i(\text{Cone}(\phi))$ are almost zero. Then by the exactness of $(-)^a$, we see that it descends to a functor:
\[(-)^a: \Sigma^{-1}D^I(B\text{-Mod}) \to D^I(B^a\text{-Mod}).\]
We're interested in having a right/left adjoint on these derived categories:
\begin{lem}\label{lem:deralmloc}
The functor $(-)^a: \Sigma^{-1}D^I(B\text{-Mod}) \to D^I(B^a\text{-Mod})$ is an equivalence of categories with quasi-inverse $(-)_{!}$ or $(-)_*$.
\end{lem}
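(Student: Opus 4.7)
Since $\tilde{\ideal{m}}$ is flat (a standing hypothesis in $\cal{B}$), Remark~\ref{rem:initrem}(iii) ensures that $(-)_!: B^a\text{-Mod} \to B\text{-Mod}$ is exact; it therefore extends termwise to a triangulated functor $(-)_!: D^I(B^a\text{-Mod}) \to D^I(B\text{-Mod})$ that preserves quasi-isomorphisms, with no derived-functor construction required. The functor $(-)^a$ is also exact (as discussed before Lemma~\ref{lem:abcatprop}), extends termwise, and by construction sends the class $\Sigma$ to isomorphisms, so it factors through $\Sigma^{-1}D^I(B\text{-Mod})$. My plan is to verify that the unit and counit of the adjunction $(-)_! \dashv (-)^a$, viewed termwise, induce natural isomorphisms between the two compositions and the identity functors.

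First I would handle the composition $((-)_!)^a$. The unit $\eta_M: M \to (M_!)^a$ was already noted to be a natural isomorphism at the module level in the paragraph just above Lemma~\ref{lem:homalmiso}. Applied termwise to a complex $M^\bullet$, this gives an isomorphism of complexes, a fortiori an isomorphism in $D^I(B^a\text{-Mod})$. Hence $((-)_!)^a \simeq \id$ as endofunctors of $D^I(B^a\text{-Mod})$.

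Next, for the composition $((-)^a)_!$ I would show that the counit $\epsilon_X: (X^a)_! \to X$ at the module level is an almost isomorphism. Unravelling, $(X^a)_! = \tilde{\ideal{m}} \otimes_R (X^a)_* = \tilde{\ideal{m}} \otimes_R \hom{R}{\tilde{\ideal{m}}}{X}$, and $\epsilon_X$ factors through $\tilde{\ideal{m}} \otimes_R X$; the map $\hom{R}{\tilde{\ideal{m}}}{X} \to X$ is an almost isomorphism by Lemma~\ref{lem:homalmiso}, and tensoring with $\tilde{\ideal{m}}$ preserves almost isomorphisms (Lemma~\ref{lem:almzero}(ii)), while $\tilde{\ideal{m}} \otimes_R X \to X$ is an almost isomorphism by Remark~\ref{rem:iter}. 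Applied termwise to a complex $X^\bullet$, $\epsilon$ produces a map whose cone has termwise almost-zero cohomology, hence lies in $\Sigma$; so $\epsilon$ becomes a natural isomorphism in $\Sigma^{-1}D^I(B\text{-Mod})$. Together these verifications show $(-)_!$ is quasi-inverse to $(-)^a$.

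For the $(-)_*$ variant, since $(-)_*$ is only left exact in general I would instead work with the right derived functor $R(-)_*$, constructed via K-injective resolutions in the Grothendieck abelian category $B^a\text{-Mod}$ (cocomplete by Lemma~\ref{lem:abcatprop}(i)). The main, and essentially only, obstacle is organisational: one must verify that the unit $X \to R(X^a)_*$ and counit $(R(M_*))^a \to M$ of the derived adjunction $(-)^a \dashv R(-)_*$ remain (almost-)isomorphisms after localisation at $\Sigma$. Both statements reduce termwise, on injective representatives, to the almost-isomorphism assertion of Lemma~\ref{lem:homalmiso}, so the same argument as for $(-)_!$ then goes through to produce a second quasi-inverse.
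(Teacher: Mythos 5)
Your proposal is correct in substance, and for the $(-)_!$ composite it runs on the same engine as the paper's own proof: the unit is an isomorphism already at the module level, while the counit is a termwise almost isomorphism, so its cone has almost-zero cohomology and becomes invertible after localising at $\Sigma$. You diverge from the paper on two points worth flagging. First, you invoke flatness of $\tilde{\ideal{m}}$ to make $(-)_!$ exact. The paper deliberately avoids this: flatness is built into the definition of the category $\cal{B}$, but not into the ambient almost setting $(R,\ideal{m})$ used here (only $\ideal{m}^2=\ideal{m}$ and Condition (B)), and the paper later stresses, at the start of the subsection on almost homological algebra, that this lemma needs no flatness hypothesis. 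The hypothesis is in fact unnecessary by your own trick: if $f$ is a quasi-isomorphism in $C^I(B^a\text{-Mod})$, then applying the exact functor $(-)^a$ to the termwise complex $f_!$ (or $f_*$) recovers $f$ up to the unit/counit isomorphisms, so $H^i(\text{Cone}(f_!))$ is almost zero and $f_!$ already lies in $\Sigma$; hence the \emph{underived, termwise} $(-)_!$ and $(-)_*$ descend to functors $D^I(B^a\text{-Mod}) \to \Sigma^{-1}D^I(B\text{-Mod})$ with no exactness needed. This is precisely the paper's proof, and is the point of taking $\Sigma^{-1}D^I$ as the target: the localisation absorbs the failure of exactness.

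Second, for the $(-)_*$ variant you pass to the derived functor $R(-)_*$ via K-injective resolutions. This can be made to work, but it is heavier machinery than the statement requires, and there is a genuine wrinkle you do not address: for a bounded interval $I$ the higher derived functors $R^i(-)_*$ are only almost zero, not zero, so $R(-)_*$ of an object of $D^I(B^a\text{-Mod})$ need not lie in $D^I(B\text{-Mod})$; you would have to compose with a truncation (or restrict to intervals unbounded above) before localising. Also, the reduction of the unit/counit claims "termwise on injective representatives to Lemma~\ref{lem:homalmiso}" is looser than needed; the clean argument is again the $(-)^a$-and-cone computation, using that the counit $((-)_*)^a \to \text{id}$ is a natural isomorphism on all modules. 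The paper's termwise $(-)_*$ sidesteps both issues at once, since termwise application preserves $C^I$ and the left-exactness defect is killed by $\Sigma$.
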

\begin{proof}
Extend $(-)_{*}$ termwise to a functor $(-)_{*}: C^I(B^a\text{-Mod}) \to C^I(B\text{-Mod})$. We note then that by definition $(-)_{*}$ does indeed descend to a functor $D^I(B^a\text{-Mod}) \to \Sigma^{-1} D^I(B\text{-Mod})$ such that $(-)_{*}^a: D^I(B^a\text{-Mod}) \to D^I(B^a\text{-Mod})$ is naturally isomorphic to the identity.\footnote{It is not necessarily true that $(-)_{*}$ is a functor with target $D^I(B\text{-Mod})$, because it is only left-exact in general.} Now by construction $(-)^a_{*}: \Sigma^{-1} D^I(B\text{-Mod}) \to \Sigma^{-1} D^I(B\text{-Mod})$ is too naturally isomorphic to the identity, as desired. The same argument applies for $(-)_!$.
\end{proof}
By Lemma~\ref{lem:abcatprop}, we see that $A\text{-Mod}$ satisfies the AB5 axiom for abelian categories, and as the category has a generator, namely $A$, we can conclude that $A\text{-Mod}$ has enough injectives. From the same lemma, we may also conclude that $A\text{-Mod}$ has enough flat/almost projective objects.\\
\indent Given an $A$-module $M$, we can derive the functors $M\otimes_A -$ (resp. $\alhom{A}{M}{-}$, resp. $\alhom{A}{-}{N}$) by taking flat (resp. almost projective, resp. injective) resolutions. Indeed, bounded above exact complexes of flat (resp. almost projective) modules are acyclic for the functor $M \otimes_A -$ (resp. $\alhom{A}{-}{N}$), which enables us to compute the derived functors like usual. Let us sketch why in the case of $\alhom{A}{-}{N}$: suppose $P_{\bullet}$ is a bounded above complex of projective objects, and $J^{\bullet}$ an injective resolution for $N$. Note that, as discussed (see Lemma~\ref{lem:initalm}), for any $A$-module $N$ and a complex $\mathscr{C}$ of $A_*$-modules, $\alhom{A}{\mathscr{C}^a}{N} =\hom{A_*}{\mathscr{C}}{N_*}^a$. Thus, by considering when $H^i(\mathscr{C})^a\simeq 0$ and using that $(-)_{*}$ preserves injectives, we see that $\alhom{A}{-}{J^i}$ is exact. Considering $C$, the double complex $C_{ij}:=\alhom{A}{P_{i}}{J^j}$, we see firstly that $\text{Tot}(C)$ is quasi-isomorphic to the complex $\alhom{A}{P_{\bullet}}{M}$ by the vertically filtered spectral sequence, and secondly that $\text{Tot}(C)$ is quasi-isomorphic to $0$ by the horizontally filtered spectral sequence. Thus, the class of almost projective objects is `adapted' to $\alhom{A}{-}{M}$, in the sense of \cite[4.3]{algebragel}, and similarly the flat objects will be adapted to $M \otimes_A -$. \cite[Theorem 4.8]{algebragel} or \cite[Theorem 10.5.9]{weibel} gives us a way of deriving these functors.\\
\indent Combining Lemmas~\ref{lem:abcatprop}, \ref{lem:deralmloc}, we see that for $A$-modules $S,T$:
\[\tor{n}{A}{S}{T} = \tor{n}{A_*}{S_*}{T_*}^a, \; \; \alext{n}{A}{S}{T} = \ext{n}{A_*}{S_*}{T_*}^a,\]
where alExt is the derived functor of alHom. Thus, an $A$-module $M$ is flat/almost projective if and only if the the higher Tor/Ext groups for the pair $(M_*,N_*)$ are almost zero, for any $R^a$-module $N$.
\begin{rem}
We briefly remark that, due to the fact that filtered colimits of flat objects are flat, Tor commutes with filtered colimits.
\end{rem}
\subsection{Flatness criteria}
We begin with a generalisation of a usual criterion for flatness in the almost category. In what follows, we will use the fact that any almost module can be written as a colimit of its finitely generated sub-modules, which follows from Lemma~\ref{lem:abcatprop} and the corresponding statement for modules. We fix throughout this section an almost set-up $(R, \ideal{m})$.
\begin{thm}\label{thm:finflatness}
Let $M$ be an $R^a$-module; $M$ is $R^a$-flat if and only if, for every finitely generated ideal $I \subset R^a$ we have $I \otimes_{R^a} M \to M$ is a monomorphism.
\end{thm}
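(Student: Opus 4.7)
The forward direction is immediate: if $M$ is flat, then tensoring the monomorphism $I \hookrightarrow R^a$ with $M$ yields a monomorphism $I \otimes_{R^a} M \to R^a \otimes_{R^a} M \simeq M$.

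For the converse, the plan is to transcribe the classical proof of the analogous criterion, using that $R^a$-Mod is AB5 (Lemma~\ref{lem:abcatprop}), that every almost module is a filtered colimit of its finitely generated sub-modules, and that Tor commutes with filtered colimits (the remark following Lemma~\ref{lem:deralmloc}). Since flatness of $M$ is equivalent to the vanishing of $\tor{1}{R^a}{N}{M}$ for every $R^a$-module $N$, I will establish this vanishing in three stages. The hypothesis, combined with the long exact sequence attached to $0 \to I \to R^a \to R^a/I \to 0$, is exactly $\tor{1}{R^a}{R^a/I}{M} = 0$ for every finitely generated ideal $I$.

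In the first stage I extend to an arbitrary ideal $J \subset R^a$: write $J$ as the filtered colimit of its finitely generated sub-ideals, and commute the Tor through the colimit to conclude $\tor{1}{R^a}{R^a/J}{M} = 0$. In the second stage I extend to any finitely generated $R^a$-module $N$ by induction on the number of almost generators. For a single generator $x$ one has $N \simeq R^a/\text{Ann}_{R^a}(x)$, covered by stage one. For generators $x_1, \dots, x_n$, let $N_1 \subset N$ be the cyclic sub-module generated by $x_1$; the long exact sequence associated to $0 \to N_1 \to N \to N/N_1 \to 0$, together with the inductive hypothesis applied to $N/N_1$ (which needs only $n-1$ generators), forces $\tor{1}{R^a}{N}{M} = 0$. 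In the third stage I extend to an arbitrary $N$ by writing it as a filtered colimit of its finitely generated sub-modules and invoking Tor-commutation with filtered colimits once more.

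The step requiring the most care is stage one, where one must justify that the finitely generated almost sub-ideals of $J$ form a directed system whose colimit in $R^a$-Mod is $J$; this follows from the compatibility of the functors $(-)_*$ and $(-)_!$ with finitely generated sub-modules and filtered colimits, as used throughout subsection~1.1. Every remaining step is a direct translation of the classical argument into the abelian category $R^a$-Mod.
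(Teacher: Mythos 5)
Your argument is correct, and every ingredient you lean on is available in the paper: that $R^a$-Mod satisfies AB5, that any almost module is the filtered colimit of its finitely generated submodules (stated at the start of the flatness section), that Tor commutes with filtered colimits, and the Tor formalism with its long exact sequences. The route differs from the paper's only in the d\'evissage for finitely generated modules. You reduce to cyclic modules, identified as $R^a/J$ for an arbitrary ideal $J$ (covered by your stage one), and induct on the number of generators via the long exact sequence for $\text{Tor}$; the paper instead writes a finitely generated module as $(R^a)^n/L$, compares the sequence $0 \to L\times_{(R^a)^n}(R^a)^{n-1} \to L \to I' \to 0$ (with $I'$ the image ideal under projection to the last coordinate) against $0 \to (R^a)^{n-1} \to (R^a)^n \to R^a \to 0$, tensors with $M$, and concludes by the snake lemma, inducting on $n$; the hypothesis, extended to all ideals exactly as in your stage one, enters only through $I'$. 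The outer colimit reductions are identical in both proofs. Your version (the Matsumura-style argument) uses a bit more of the Tor machinery in $R^a$-Mod, in particular the identification of one-generator modules with $R^a/\ker(R^a \to N)$, whereas the paper's snake-lemma variant works directly with monomorphisms into free modules and needs little beyond right-exactness of $\otimes_{R^a}$; both are complete, and neither is materially more general than the other.
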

\begin{proof}
The `only if' direction is clear. For the converse, note that as tensoring commutes with colimits and colimits are exact, we may assume the supposition for any ideal $I \subset R^a$. Given an exact sequence of $R^a$-module $\mathscr{E}:=0 \to N_1 \to N_2 \to N_3 \to 0$, we need to show that $\mathscr{E} \otimes_{R^a} M$ is exact, for which it suffices to show that $\tor{1}{R^a}{N_3}{M}=0$. Now, write $N_3$ as a colimit of its finitely generated $R^a$-modules; as Tor commutes with colimits, we reduce to the case when $N_3 = (R^{a})^{n}/L$ where $L \subset (R^a)^n$ an $R^a$-submodule. Next, consider the following diagram:
\[
\begin{tikzcd}
0 \arrow[r] & (R^{a})^{n-1} \arrow[r] & (R^a)^n \arrow[r] & R^a \arrow[r] & 0\\
0 \arrow[r] & L \times_{(R^a)^n} (R^{a})^{n-1} \arrow[r] \arrow[hookrightarrow]{u} & L \arrow[r] \arrow[hookrightarrow]{u} & I' \arrow[r] \arrow[hookrightarrow]{u} & 0
\end{tikzcd}\]
Here, $I'$ is the image of $L \subset (R^a)^n \to R^a$ (projection onto the last coordinate). After applying $-\otimes_{R^a} M$ to the diagram, we see that the bottom row remains right exact, and the top row remains exact as everything is free. By induction on $n$, we may assume that the left-up arrow is monomorphism, and by the hypothesis we know that the right-up arrow is. Snake lemma concludes.
\end{proof}
\begin{sect}\label{colimflat}Let $I$ be a (small) filtered category, and $F:I \to R^a\text{-Alg}$ a functor. Set $A_i := F(i)$, and for each $i \in I$, let $M_i, N_i$ be an $A_i$-modules. Set $A:= \colim_{i} A_i, M := \colim_i M_i$ and $N := \colim_i N_i$. \end{sect}
\begin{lem}\label{lem:colimflat}
In the situation of \ref{colimflat}:
\begin{enumerate}[(i)]
\item We have $\colim_{i} N_i \otimes_{A_i} M_i \simeq N \otimes_{A} M$.
\item If $M_i$ is $A_i$-flat, then $M$ is $A$-flat.
\end{enumerate}
\end{lem}
\begin{proof}
(i): By applying $(-)_{*}$ to both sides we reduce to the statement for ordinary modules via the same argument as in Lemma~\ref{lem:abcatprop} (i).\\
\indent (ii): Let $I \subset A$ be a finitely generated ideal. There is some index $i$ such that $I' \subset A_i$ and $I=I'A$; by Theorem~\ref{thm:finflatness}, we need to show $I \otimes_{A} M \to M$ is a monomorphism. Note that $I'A_j \otimes_{A_j} M_j \to M_j$ is monomorphism for each $j \ge i$, so by passing to the colimit and using (i), we get that $I \otimes_{A} M \to M$ is monomorphism by the exactness of colimits.
\end{proof}
\begin{cor}\label{cor:colimwet}
Let $A$ be an $R^a$-algebra and $B_i$ be weakly \'{e}tale algebras over $A$. Then, $\colim_{i} B_i$ is weakly \'{e}tale over $A$.
\end{cor}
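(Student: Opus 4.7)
The plan is to verify the two conditions in the definition of weakly \'{e}tale for $B := \colim_{i} B_i$ over $A$: first that $B$ is $A$-flat, and second that $B$ is flat as a module over $B \otimes_A B$ via the multiplication map. Both are direct applications of Lemma~\ref{lem:colimflat}, once the correct indexing system of almost set-ups is identified. (Implicit throughout, as in Lemma~\ref{lem:colimflat} itself, is that the indexing category $I$ is filtered, so that in particular the forgetful functor from $A$-algebras to $A$-modules preserves the colimit and so that every finitely generated ideal of the colimit ring comes from a single stage.)

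For the first condition, I would apply Lemma~\ref{lem:colimflat} to the constant diagram of almost set-ups $(R_i, \ideal{m}_i) = (A_*, \ideal{m} A_*)$, whose colimit is $(A_*, \ideal{m} A_*)$ itself (since $I$ is connected), and to the family of modules $M_i = B_i$. Each $B_i$ is $A$-flat by the weakly \'{e}tale hypothesis, so Lemma~\ref{lem:colimflat} yields that $B = \colim_i B_i$ is flat over $A$.

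For the weakly unramified condition, the key observation is that tensor product commutes with colimits in each variable, and that for a filtered category $I$ the diagonal functor $I \to I \times I$ is cofinal, so
\[
B \otimes_A B \;=\; \bigl(\colim_i B_i\bigr) \otimes_A \bigl(\colim_j B_j\bigr) \;=\; \colim_{(i,j) \in I \times I} B_i \otimes_A B_j \;=\; \colim_i \bigl(B_i \otimes_A B_i\bigr).
\]
Taking this colimit of rings as our new system of almost set-ups $(R_i, \ideal{m}_i) = ((B_i \otimes_A B_i)_*, \ideal{m}(B_i \otimes_A B_i)_*)$, with transition maps visibly compatible with the almost structure since they are $R^a$-algebra maps, Lemma~\ref{lem:colimflat} applied to $M_i = B_i$ (which is flat over $B_i \otimes_A B_i$ by weak \'{e}tale-ness of each $B_i$) gives that $B = \colim_i B_i$ is flat over $\colim_i (B_i \otimes_A B_i) = B \otimes_A B$, as desired.

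The main technical point to take care over is the identification $B \otimes_A B = \colim_i (B_i \otimes_A B_i)$ and the verification that the hypotheses of Lemma~\ref{lem:colimflat} are satisfied for the system $\{(B_i \otimes_A B_i)\}_i$; once this bookkeeping is done, the result is immediate. This is also the place where the filtered hypothesis on $I$ is essential (for a non-filtered colimit the diagonal is no longer cofinal, and coproducts of weakly \'{e}tale algebras need separate treatment via the base-change and composition assertions of Proposition~\ref{prop:assortlem}).
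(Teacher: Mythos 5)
Your proposal is correct and follows essentially the same route as the paper: the paper also notes that flatness of $A \to \colim_i B_i$ is immediate, identifies $B \otimes_A B \simeq \colim_i (B_i \otimes_A B_i)$ (citing Lemma~\ref{lem:colimflatiso}, where you instead argue via cofinality of the diagonal in a filtered index category), and then applies Lemma~\ref{lem:colimflat} to the system of almost set-ups built from the $B_i \otimes_A B_i$. Your explicit remark that the indexing category must be filtered makes precise a hypothesis the paper leaves implicit, but the argument is the same.
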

\begin{proof}
Clearly, $A \to B$ is flat, so we need to show that it is weakly unramified. Indeed, we need to ensure $B \otimes_{A} B \to B$ is flat; we note $\colim_{i}B_i \otimes_{A} B_i  \simeq B \otimes_A B$ via Lemma~\ref{lem:colimflat} (i). Then by Lemma~\ref{lem:colimflat} (ii), we're done.
\end{proof}
We have the following almost version of the local flatness criterion, which we state more generally as follows:
\begin{thm}\label{thm:genlocflat}
Let $A$ be an $R^a$-algebra. Suppose that we are given morphisms $A \to A_i$ of (not necessarily distinct) $R^{a}$-algebras so that for every $A$-module $M$, one has a filtation $0 = F_{0}(M) \subset F_{1}(M) \subset ... \subset F_{n}(M) = M$ such that each graded piece $\text{gr}_{i}(M) := F_{i+1}(M)/F_i(M)$ is an $A_{i}$-module. Then the following are equivalent:
\begin{enumerate}[(i)]
\item $M$ is $A$-flat
\item $M_i := M \otimes_{A} A_i$ is $A_i$-flat and $\tor{1}{A}{A_i}{M} = 0$ for each $i=1,...,n-1$
\end{enumerate}
\end{thm}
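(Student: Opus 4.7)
The plan is to prove (i) $\Rightarrow$ (ii) by base-change and (ii) $\Rightarrow$ (i) by combining the supplied filtration with a Grothendieck change-of-rings spectral sequence for Tor. The easy direction is routine: if $M$ is $A$-flat then $M_i \otimes_{A_i} N \simeq M \otimes_A N$ for any $A_i$-module $N$ shows $M_i$ is $A_i$-flat, and $\tor{j}{A}{A_i}{M} = 0$ for $j \ge 1$ is immediate from flatness.

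For the converse, it suffices to establish $\tor{1}{A}{N}{M} = 0$ for every $A$-module $N$; the long exact sequence of Tor then ensures that $M \otimes_A -$ is exact, i.e.\ $M$ is flat. Apply the hypothesised filtration to $N$, obtaining short exact sequences $0 \to F_i(N) \to F_{i+1}(N) \to \mathrm{gr}_i(N) \to 0$ for each $i$. Using the long exact Tor sequences and inducting on $i$ (base case $F_0(N) = 0$), the problem reduces to verifying $\tor{1}{A}{G}{M} = 0$ for any $A_i$-module $G$ (namely $G = \mathrm{gr}_i(N)$).

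For this final step I would invoke the change-of-rings spectral sequence
\[E^2_{p,q} = \tor{p}{A_i}{\tor{q}{A}{A_i}{M}}{G} \Longrightarrow \tor{p+q}{A}{M}{G},\]
constructed from an $A$-flat resolution $P_\bullet \to M$ and an $A_i$-flat resolution $Q_\bullet \to G$ by running the two filtrations on the bicomplex $(P_\bullet \otimes_A A_i) \otimes_{A_i} Q_\bullet$; existence of such resolutions is guaranteed by Lemma~\ref{lem:abcatprop}, and $-\otimes_A A_i$ carries $A$-flats to $A_i$-flats. The associated low-degree exact sequence in total degree $1$ reads
\[\tor{1}{A}{A_i}{M} \otimes_{A_i} G \longrightarrow \tor{1}{A}{M}{G} \longrightarrow \tor{1}{A_i}{M_i}{G} \longrightarrow 0,\]
and both outer terms vanish by hypothesis (ii) (the left because $\tor{1}{A}{A_i}{M} = 0$, the right because $M_i$ is $A_i$-flat), so $\tor{1}{A}{M}{G} = 0$, closing the induction.

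The main obstacle is not logical but formal: verifying that the change-of-rings spectral sequence and its low-degree exact sequence transfer to the almost category. The crucial input is the preservation of flat objects under $-\otimes_A A_i$, which makes $P_\bullet \otimes_A A_i$ a complex of $A_i$-flat objects computing $\tor{\bullet}{A}{A_i}{M}$; from here the bicomplex argument is formally identical to the classical one. All other steps are routine long-exact-sequence bookkeeping.
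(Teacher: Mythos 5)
Your proposal is correct and follows essentially the same route as the paper: the same reduction, via the filtration and long exact Tor sequences, to showing $\tor{1}{A}{M}{G}=0$ for $G$ an $A_i$-module (the paper's Lemma~\ref{lem:filtorext}), and then the same base-change spectral sequence $\tor{p}{A_i}{\tor{q}{A}{M}{A_i}}{G} \Rightarrow \tor{p+q}{A}{M}{G}$ with its low-degree exact sequence, whose outer terms vanish by hypothesis (ii). Your remark that the only work is transporting this spectral sequence to the almost category via flat resolutions (Lemma~\ref{lem:abcatprop}) matches what the paper implicitly relies on.
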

\begin{proof}
We first begin with a lemma.
\begin{lem}\label{lem:filtorext}
In the situation above, to check that $\tor{1}{A}{M}{N}=0$ (or $\alext{1}{A}{M}{N}=0$) for arbitrary $A$-modules $N$, it is sufficient to do so for when $N$ is an $A_i$-module, for any index $i=1,...,n-1$.
\end{lem}
\begin{proof}
By the supposition, we know that each module $N$ adopts a filtration $F_{i}(N)$ such that $\text{gr}_{i}(N)$ is an $A_i$-module. Assuming $\tor{1}{A}{M}{K}=0$ for any $A_i$-module $K$, we conclude by using Tor sequences applied to the exact sequences:
\[\begin{tikzcd}
0 \arrow[r] & F_{i}(N) \arrow[r]& F_{i+1}(N) \arrow[r] & \text{gr}_{i}(N) \arrow[r] & 0
\end{tikzcd}\]
that $\tor{1}{A}{M}{F_{i}(N)}=0 \Rightarrow \tor{1}{A}{M}{F_{i+1}(N)}=0$. By definition, $F_{n}(N) := N$ and $F_{1}$ is an $A_0$-module, so we eventually get that $\tor{1}{A}{M}{N} = \tor{1}{A}{M}{F_1} = 0$ via an induction, as desired. The statement for alExt follows with minor changes.
\end{proof}
To get the result, we utilise the Tor base change spectral sequence; for each index $i=1,...,n-1$, and $A_i$-modules $N$:
\[E^{2}_{pq} := \tor{p}{A_i}{\tor{q}{A}{M}{A_i}}{N} \Rightarrow \tor{p+q}{A}{M}{N}.\]
Now, by assumption, along $q=1$ and $p=1$ we have $0$'s, which implies that $\tor{1}{A}{M}{N}=0$ for any $A_i$-module $N$. By the lemma, this is enough to prove the theorem.
\end{proof}
\begin{cor}[Local flatness criterion]\label{cor:almlocflat}
Suppose $A$ is an $R^a$-algebra, $M$ an $A$-module, and $I$ a nilpotent ideal. The following are equivalent:
\begin{enumerate}[(i)]
\item $M$ is $A$-flat
\item $M_0:= M/IM$ is $A_0 := A/I$-flat and $I^k/I^{k+1} \otimes_{A_0} M_0 \to I^kM/I^{k+1}M$ is an isomorphism for each $k\ge 1$.
\end{enumerate}
\end{cor}
\begin{proof}
We prove the non-obvious direction. Indeed, the previous theorem applies by considering the filtation $F_{n-k}(M) := I^kM$. So, we only need to show that $\tor{1}{A}{A/I}{M}=0$. Consider exact sequences:
\[\alpha_k := \begin{tikzcd} 0 \rar & \tor{1}{A}{A/I^k}{M} \rar & I^k \otimes_A M \rar & I^kM \rar & 0 \end{tikzcd}\]
Applying Snake lemma to $\alpha_{k+1} \to \alpha_k$, one obtains that 
\[\coker\left(\tor{1}{A}{A/I^{k+1}}{M} \to \tor{1}{A}{A/I^k}{M}\right) = 0.\]
Since $I$ is nilpotent, descending induction gives that $\tor{1}{A}{A/I}{M} = 0$
\end{proof}

\subsection{Almost homological algebra}
Here we collect some results on homological algebra in the almost category following \cite[Chapter 2.4]{gabram}. We note that Lemma~\ref{lem:deralmloc} and our subsequent results on deriving alHom and $\otimes$ in the almost category do not require the condition that $\tilde{\ideal{m}}$ be flat, but merely that $\ideal{m}^2=\ideal{m}$, in contrast to \textit{loc. cit}.
\begin{prop}Let $M$ be an $A$-module.
\begin{enumerate}[(i)]
\item $M$ is almost finitely generated if and only if for every generated ideal $\ideal{m}_0 \subset \ideal{m}$ there exists a finitely generated submodule $M_0 \subset M$ such that $\ideal{m}_0M \subset M_0$.
\item The following are equivalent:
\begin{enumerate}[(a)]
\item $M$ is almost finitely presented
\item for arbitrary $\epsilon, \delta \in \ideal{m}$, there exists positive integers $n = n(\epsilon), m = m(\epsilon)$ and a three term complex
\[\begin{tikzcd} A^m \arrow[r, "\psi_{\epsilon}"] & A^n \arrow[r, "\phi_{\epsilon}"] & M \end{tikzcd}\]
with $\epsilon \cdot \coker(\phi_{\epsilon})=0$ and $\delta \cdot \ker(\phi_{\epsilon}) \subset \im(\psi_{\epsilon})$. 
\item For every finitely generated ideal $\ideal{m}_0 \subset \ideal{m}$, there is a complex 
\[\begin{tikzcd} A^m \arrow[r, "\psi"] & A^n \arrow[r, "\phi"] & M \end{tikzcd}\]
with $\ideal{m}_0 \cdot \coker(\phi)=0$ and $\ideal{m}_0 \cdot \ker(\phi) \subset \im(\psi)$
\end{enumerate}
\end{enumerate}
\end{prop}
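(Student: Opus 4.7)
Part (i) is handled by a direct argument. The reverse implication is immediate: if $\ideal{m}_0 M \subset M_0$ for some finitely generated $M_0 \subset M$, then the roof consisting of the inclusion $M_0 \hookrightarrow M$ together with the identity $M_0 \to M_0$ has kernels and cokernels killed by $\ideal{m}_0$, showing $(M, M_0) \in E_{\cal{M}}(\ideal{m}_0)$. For the forward direction, use Condition (B) combined with $\ideal{m} = \ideal{m}^2$ to find a finitely generated $\ideal{m}_1 \subset \ideal{m}$ with $\ideal{m}_1^3 \supset \ideal{m}_0$. By almost finite generation at scale $\ideal{m}_1$, there exist a finitely generated $N$ with generators $n_1, \ldots, n_k$, an auxiliary $P$, and morphisms $\phi: P \to M$, $\psi: P \to N$ whose kernels and cokernels are killed by $\ideal{m}_1$. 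For each generator $\epsilon_a$ of $\ideal{m}_1$ and each $n_i$, lift $\epsilon_a n_i$ to $p_{a,i} \in P$ using $\ideal{m}_1 N \subset \psi(P)$, and set $M_0 := \sum_{a,i} A \cdot \phi(p_{a,i}) \subset M$, which is finitely generated. A diagram chase shows $\ideal{m}_1^3 M \subset M_0$: given $m \in M$ and $\epsilon_a\epsilon_b\epsilon_c \in \ideal{m}_1^3$, write $\epsilon_a m = \phi(q)$ for some $q \in P$ and $\psi(q) = \sum c_i n_i$; then $\psi(\epsilon_b q - \sum c_i p_{b,i}) = 0$ forces this element into $\ker(\psi)$, hence it is killed by $\epsilon_c$, so applying $\phi$ gives $\epsilon_a\epsilon_b\epsilon_c m \in M_0$.

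For part (ii), the easy implications are (c) $\Rightarrow$ (b), by specializing to $\ideal{m}_0 = (\epsilon, \delta)$, and (c) $\Rightarrow$ (a): the module $N := \coker(\psi) = A^n / \im(\psi)$ is finitely presented, the induced $\bar\phi: N \to M$ has $\ker(\bar\phi) = \ker(\phi)/\im(\psi)$ killed by $\ideal{m}_0$ (by hypothesis) and $\coker(\bar\phi) = \coker(\phi)$ killed by $\ideal{m}_0$, so $(M, N) \in E_{\cal{M}}(\ideal{m}_0)$ with the third module being $N$ itself.

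The main work is (a) $\Rightarrow$ (c); the direction (b) $\Rightarrow$ (c) then follows by a parallel argument. Given finitely generated $\ideal{m}_0$, use Condition (B) to choose finitely generated $\ideal{m}_1 \subset \ideal{m}$ with a suitable power of $\ideal{m}_1$ containing $\ideal{m}_0$. By (a), obtain finitely presented $N$ with presentation $A^m \stackrel{\beta}{\to} A^n \stackrel{\alpha}{\to} N \to 0$ and $P$, $\phi: P \to M$, $\psi: P \to N$ with kernels and cokernels killed by $\ideal{m}_1$. Lift the presentation to $M$: for each generator $\epsilon$ of $\ideal{m}_1$, define $\phi_\epsilon: A^n \to M$ by $e_i \mapsto \phi(p_i^\epsilon)$ where $\psi(p_i^\epsilon) = \epsilon \alpha(e_i)$, and use $\epsilon' \beta$ as the left map (for $\epsilon' \in \ideal{m}_1$) so that the composition vanishes identically. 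Direct summation over generators of $\ideal{m}_1$ then yields a map $\bigoplus A^n \to M$ whose cokernel is killed by $\ideal{m}_0$, by a chase analogous to that of (i).

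The main obstacle is controlling the kernel: the kernel of the summed map contains ``cross-kernel'' elements which are not simply direct sums of the individual kernels. To fix this, enlarge the first map by adjoining explicit cross-relations of the form $\epsilon_j e_i^{(k)} - y_{j,i,k}^{(j)}$, chosen so that $\phi_{\epsilon_j}(y_{j,i,k}) = \epsilon_j \phi_{\epsilon_k}(e_i^{(k)})$; these lie in the kernel of the summed map, and after multiplication by generators of $\ideal{m}_1$ they span the cross-kernel modulo the original image. A final bookkeeping, using that $\ideal{m}_1^k \supset \ideal{m}_0$ for the chosen power and that kernels of $\phi, \psi$ are killed by $\ideal{m}_1$, produces the three-term complex demanded by (c). For (b) $\Rightarrow$ (c), the same pattern applies: given finitely generated $\ideal{m}_0$, use $\ideal{m} = \ideal{m}^2$ to produce finitely generated $\ideal{m}_0' \subset \ideal{m}$ with $\ideal{m}_0 \subset \ideal{m} \cdot \ideal{m}_0'$, combine the per-generator complexes supplied by (b) via the same cross-relation enlargement, and verify both the cokernel and kernel conditions.
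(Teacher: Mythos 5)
Your proposal is essentially correct, but it follows a genuinely different route from the paper: the paper does not prove the proposition directly, deferring to Gabber--Ramero (Proposition 2.3.10) and recording only the tools used there, namely the Exchange Lemma and Lemma~\ref{lem:exchange2}, which transfer ``the kernel is almost finitely generated'' between two approximate free covers of $M$ related by maps $\phi,\psi$ with $q\circ\phi=\epsilon\cdot p$ and $p\circ\psi=\epsilon'\cdot q$; that packaging is what the paper reuses later (in Theorem~\ref{thm:presented}(i) and Theorem~\ref{thm:descent1}(ii)). You instead build the three-term complex by hand: approximate $M$ by a finitely presented $N$ through a roof, lift a presentation of $N$ to maps $\phi_{\epsilon}:A^n\to M$ for each generator $\epsilon$ of an auxiliary ideal $\ideal{m}_1$, sum these up, and tame the cross-kernel by adjoining explicit cross-relations. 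This is more self-contained and elementary, at the price of heavier bookkeeping; the exchange-lemma route hides exactly this bookkeeping in a reusable lemma. Two points in your write-up need tightening, though neither is fatal. First, in the almost category a map with cokernel killed by $\ideal{m}_1$ only permits lifting after multiplication by a further element of $\ideal{m}$ (Remark~\ref{rem:initrem}(ii)), so equalities such as $\psi(p_i^{\epsilon})=\epsilon\alpha(e_i)$ and $\phi_{\epsilon_j}(y_{j,i,k})=\epsilon_j\phi_{\epsilon_k}(e_i^{(k)})$ should carry an extra factor from $\ideal{m}_1$; in particular the cross-relations that genuinely lie in the kernel are of the shape $\epsilon_l\epsilon_j e_i^{(k)}-\epsilon_l\epsilon_k e_i^{(j)}$, since $\epsilon_j p_i^{\epsilon_k}-\epsilon_k p_i^{\epsilon_j}\in\ker(\psi)$ is killed by $\epsilon_l$. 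This only raises the power of $\ideal{m}_1$ you must arrange to contain $\ideal{m}_0$, which is harmless. Second, the assertion that the cross-relations ``span the cross-kernel'' is the crux and deserves a proof; it does hold, by a collection argument: if $(v^{(1)},\dots,v^{(r)})$ kills the summed map, then $q:=\sum_{j,i}v^{(j)}_i p_i^{\epsilon_j}\in\ker(\phi)$ is killed by $\ideal{m}_1$, so applying $\psi$ shows $\epsilon_a\sum_j\epsilon_j v^{(j)}\in\ker(\alpha)=\im(\beta)$; multiplying the vector by $\epsilon_a\epsilon_{j_0}$ and using the cross-relations to move every component into the summand $j_0$ leaves exactly this collected element there modulo the image of your relation matrix, and the per-summand columns $\epsilon'\beta$ then absorb it, giving $\ideal{m}_1^3\cdot\ker\subset\im$ (up to the extra factors above). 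With those repairs, and the same collection argument for (b)$\Rightarrow$(c) (where the cross-relation lifts come from the cokernel conditions of the individual complexes), your proof goes through; parts (i), (c)$\Rightarrow$(a) and (c)$\Rightarrow$(b) are fine as written.
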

\begin{proof}
See \cite[Proposition 2.3.10]{gabram}. 
\end{proof}
\begin{lem}
Let $M$ be an almost finitely generated $A$-module and $B$ a flat $A$-algebra. Then $\text{Ann}_B(B \otimes_A M) \simeq B \otimes_A \text{Ann}_A(M)$.
\end{lem}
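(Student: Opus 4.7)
The plan is to first establish the equality for finitely generated $M$ by a direct kernel computation, and then extend to almost finite generation by a uniform-continuity approximation.

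\textbf{Finitely generated case.} Suppose $M \in \cal{M}_n(A)$ with a surjection $A^n \twoheadrightarrow M$ coming from almost elements $m_1, \ldots, m_n \in M_*$. The kernel of the map $\mu : A \to M^n$, $a \mapsto (a m_1, \ldots, a m_n)$, coincides with $\text{Ann}_A(M)$: on almost elements, if $a \in A_*$ kills every $m_i$ then $a \cdot \ideal{m} M_* = 0$ (since $\ideal{m} M_* \subset \sum_i A_* m_i$ because the cokernel of $A_*^n \to M_*$ is $\ideal{m}$-torsion), so $aM_*$ is $\ideal{m}$-torsion and hence zero by Remark~\ref{rem:initrem}(iv). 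Tensoring the exact sequence $0 \to \text{Ann}_A(M) \to A \to M^n$ with the flat $A$-algebra $B$ gives $0 \to B \otimes_A \text{Ann}_A(M) \to B \to (B \otimes_A M)^n$; the images $1 \otimes m_i$ generate $B \otimes_A M$ over $B$, so the same argument run over $B$ identifies this kernel with $\text{Ann}_B(B \otimes_A M)$.

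\textbf{Approximation.} Given any finitely generated $\ideal{m}_0 \subset \ideal{m}$, pick a finitely generated submodule $M_0 \subset M$ with $\ideal{m}_0 M \subset M_0$ (characterization of almost finite generation). Then $(M_0, M) \in E_{\cal{M}}(\ideal{m}_0)$ in $\cal{M}(A)$, witnessed by $M_0$ itself as the intermediate module. I would transport this closeness to $\cal{J}_B(B)$ along two routes. First, Lemma~\ref{lem:unicont}(v) pushes the pair into $\cal{M}(B)$ and Lemma~\ref{lem:unicont}(i) applied over $B$ then places $\text{Ann}_B(B \otimes_A M_0)$ near $\text{Ann}_B(B \otimes_A M)$. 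Second, Lemma~\ref{lem:unicont}(i) applied over $A$ places $\text{Ann}_A(M_0)$ near $\text{Ann}_A(M)$ in $\cal{J}_A(A)$; tensoring the defining containment relations with the flat $B$ preserves them, giving closeness of $B \otimes_A \text{Ann}_A(M_0)$ and $B \otimes_A \text{Ann}_A(M)$ in $\cal{J}_B(B)$. The finitely generated case applied to $M_0$ equates the two intermediate ideals $\text{Ann}_B(B \otimes_A M_0) = B \otimes_A \text{Ann}_A(M_0)$, so $\text{Ann}_B(B \otimes_A M)$ and $B \otimes_A \text{Ann}_A(M)$ lie in a common entourage of $\cal{J}_B(B)$ controlled by $\ideal{m}_0$.

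\textbf{Conclusion and main obstacle.} As $\ideal{m}_0$ ranges over finitely generated sub-ideals of $\ideal{m}$, the resulting entourages on $\cal{J}_B(B)$ form a cofinal family—using Condition (B) together with $\ideal{m} B = \ideal{m}_B$ to dominate any finitely generated sub-ideal of $\ideal{m}_B$ by some $\ideal{m}_0^k B$—so separatedness of $\cal{J}_B(B)$ forces the two ideals to coincide as subsets of $B$. The natural map $B \otimes_A \text{Ann}_A(M) \to B$, injective by flatness of $B$ and landing in $\text{Ann}_B(B \otimes_A M)$, then realises the asserted isomorphism. The principal obstacle is precisely the uniform-continuity bookkeeping in the second step: each application of Lemma~\ref{lem:unicont} degrades the closeness parameter (typical squarings appear in its proof), and one must verify the resulting family is cofinal in the target uniform structure before invoking separatedness.
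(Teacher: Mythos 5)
Your proposal is correct and follows essentially the same route as the paper: identify $\text{Ann}_A(M)=\ker\bigl(A \to M^n\bigr)$ for finitely generated $M$ and use flatness of $B$ to commute this kernel with $B\otimes_A -$, then pass to the almost finitely generated case via the uniform continuity of $\text{Ann}_B(B\otimes_A -)$ and $B\otimes_A \text{Ann}_A(-)$ from Lemma~\ref{lem:unicont}. Your second and third paragraphs merely spell out the entourage bookkeeping and the appeal to separatedness of $\cal{J}_B(B)$ that the paper compresses into ``by approximating $M$ by finitely generated modules,'' so there is no substantive difference.
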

\begin{proof}
See~\cite[Lemma 2.4.6]{gabram}.
\end{proof}
\begin{lem}\label{lem:almprojsplit}
Let $M$ be an almost finitely generated $A$-module, $A$ an $R^a$-module. Then $M$ is almost finitely projective if and only if, for arbitrary $\epsilon \in \ideal{m}$, there exists $n(\epsilon) \in \mathbb{N}$ and $A$-linear morphisms
\[\begin{tikzcd}M \arrow[r, "u_{\epsilon}"] & A^{n(\epsilon)} \arrow[r, "v_{\epsilon}"] & M \end{tikzcd}\]
such that $v_{\epsilon} \circ u_{\epsilon} = \epsilon \cdot 1_{M}$.
\end{lem}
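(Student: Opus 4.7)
The plan is to move between almost projectivity (exactness of $\alhom{A}{M}{-}$, equivalently vanishing of $\alext{1}{A}{M}{-}$) and the explicit $\epsilon$-splitting through a finitely generated free $A$-module, using almost finite generation together with Condition~(B) to absorb all constants into the single prescribed parameter $\epsilon$.

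For the forward direction, I would fix $\epsilon \in \ideal{m}$ and, using $\ideal{m} = \ideal{m}^2$, write $\epsilon = \epsilon_1 \epsilon_2$ with $\epsilon_i \in \ideal{m}$. Almost finite generation applied to the ideal $(\epsilon_1) \subset \ideal{m}$ yields a finitely generated submodule $M_0 \subset M$ with $\epsilon_1 M \subset M_0$; choosing generators of $M_0$ gives an $A$-linear $v\colon A^n \to M$ with image $M_0$ whose corestriction $\bar v\colon A^n \twoheadrightarrow M_0$ is surjective. The containment $\epsilon_1 M \subset M_0$ lets me factor multiplication by $\epsilon_1$ on $M$ as $M \stackrel{g}{\longrightarrow} M_0 \stackrel{\iota}{\hookrightarrow} M$. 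Almost projectivity makes $\alhom{A}{M}{-}$ exact, so $\bar v$ induces an epimorphism $\alhom{A}{M}{A^n} \twoheadrightarrow \alhom{A}{M}{M_0}$; applying Remark~\ref{rem:initrem}(ii) to this epimorphism and the element $g$, I obtain $u\colon M \to A^n$ with $\bar v \circ u = \epsilon_2 g$, and composing with $\iota$ gives $v \circ u = \epsilon_2 \epsilon_1 \cdot 1_M = \epsilon \cdot 1_M$, as required.

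For the reverse direction, I would use the splitting property to show $\alext{i}{A}{M}{N} = 0$ for every $A$-module $N$ and $i \ge 1$. Contravariant functoriality of $\alext{i}{A}{-}{N}$ applied to the identity $v_\epsilon \circ u_\epsilon = \epsilon \cdot 1_M$ factors multiplication by $\epsilon$ on $\alext{i}{A}{M}{N}$ through
\[
\alext{i}{A}{M}{N} \longrightarrow \alext{i}{A}{A^{n(\epsilon)}}{N} \longrightarrow \alext{i}{A}{M}{N},
\]
and the middle term vanishes because $A^{n(\epsilon)}$ is free and hence almost projective by Lemma~\ref{lem:abcatprop}(ii). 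Ranging over $\epsilon \in \ideal{m}$, the whole of $\ideal{m}$ annihilates $\alext{i}{A}{M}{N}$, so this almost module is almost zero and therefore zero; this delivers the exactness of $\alhom{A}{M}{-}$ demanded by almost projectivity.

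The hard part will be the forward direction, where two separate losses of constant must be compounded: almost finite generation only produces a map $v$ whose cokernel is killed by $\epsilon_1$, while Remark~\ref{rem:initrem}(ii) only lifts the resulting map $g$ after multiplication by a further $\epsilon_2$. Condition~(B), in the form $\epsilon = \epsilon_1 \epsilon_2$, is precisely what folds both losses into the single prescribed $\epsilon \in \ideal{m}$.
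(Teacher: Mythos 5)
Your proposal is correct and follows essentially the same route as the paper: the reverse direction factors multiplication by $\epsilon$ on $\alext{i}{A}{M}{N}$ through the vanishing term for the free module, and the forward direction uses almost finite generation to produce $A^n \twoheadrightarrow M_0 \supset \epsilon_1 M$, factors $\epsilon_1\cdot 1_M$ through $M_0$, and lifts it (up to a further factor $\epsilon_2$) via the epimorphism $\alhom{A}{M}{A^n}\twoheadrightarrow\alhom{A}{M}{M_0}$ coming from almost projectivity\textemdash you are in fact more explicit than the paper about where the two lost constants go. One small caveat: $\ideal{m}=\ideal{m}^2$ only lets you write $\epsilon$ as a finite sum of products $\epsilon_1\epsilon_2$, not as a single product, but since the splitting property is additive in $\epsilon$ (direct-sum the free modules and add the maps $u,v$), your argument for products immediately yields the statement for every $\epsilon\in\ideal{m}$.
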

\begin{proof}
See~\cite[Lemma 2.4.15]{gabram}.
\end{proof}
We have the following:
\begin{thm}\label{thm:presented}
Let $A$ be an $R^a$-algebra. 
\begin{enumerate}[(i)]
\item Every almost finitely generated projective $A$-module is almost finitely presented.
\item Every almost finitely presented flat $A$-module is almost projective.
\end{enumerate}
\end{thm}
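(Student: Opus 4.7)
The plan is to handle the two parts in turn. For part (i), I would apply the filtered-colimit characterization of almost finite presentation from the proposition at the end of the previous subsection: it suffices to show that for any filtered system $(N_\lambda)$ of $A$-modules, the natural map $\beta_M\colon \inj_\lambda \alhom{A}{M}{N_\lambda} \to \alhom{A}{M}{\inj_\lambda N_\lambda}$ is an isomorphism. Since $M$ is almost finitely generated and almost projective, Lemma~\ref{lem:almprojsplit} furnishes, for each $\epsilon \in \ideal{m}$, maps $u_\epsilon\colon M \to A^{n(\epsilon)}$ and $v_\epsilon\colon A^{n(\epsilon)} \to M$ with $v_\epsilon u_\epsilon = \epsilon\cdot 1_M$. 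The functor $\alhom{A}{A^{n(\epsilon)}}{-}$ is naturally isomorphic to $(-)^{n(\epsilon)}$ and hence commutes with filtered colimits, so $\beta_{A^{n(\epsilon)}}$ is an isomorphism. The naturality squares for $v_\epsilon^*$ and $u_\epsilon^*$ factor $\epsilon\cdot \beta_M$ through the isomorphism $\beta_{A^{n(\epsilon)}}$, and a short diagram chase yields $\epsilon\cdot \ker(\beta_M) = 0 = \epsilon\cdot \coker(\beta_M)$; ranging over all $\epsilon \in \ideal{m}$ shows these are almost zero, whence $\beta_M$ is an isomorphism in the almost category and $M$ is almost finitely presented.

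For part (ii), I would use Lemma~\ref{lem:almprojsplit} in the converse direction: it suffices to produce, for each $\epsilon \in \ideal{m}$, maps $u\colon M \to A^n$ and $v\colon A^n \to M$ with $v\circ u = \epsilon\cdot 1_M$. Given $\epsilon$, pick an auxiliary $\epsilon_1 \in \ideal{m}$ (to be shrunk as needed, using $\ideal{m}^2 = \ideal{m}$ to absorb factors), and by almost finite presentation obtain a three-term complex $A^m \xrightarrow{\psi} A^n \xrightarrow{\phi} M$ with $\epsilon_1\coker(\phi)=0$ and $\epsilon_1\ker(\phi)\subseteq \im(\psi)$. Setting $v:=\phi$, constructing $u$ is the real work. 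Because $\epsilon_1 M \subseteq \im(\phi)$, the endomorphism $\epsilon_1\cdot 1_M$ factors as $M \to \im(\phi) \hookrightarrow M$, so producing $u$ amounts to lifting the map $M \to \im(\phi)$ along the surjection $A^n \twoheadrightarrow \im(\phi)$; the obstruction is a class in $\alext{1}{A}{M}{\ker(\phi)}$, and the task reduces to showing this Ext is almost zero.

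To dispatch the obstruction, I would use the exact sequence $0 \to \im(\psi) \to \ker(\phi) \to \ker(\phi)/\im(\psi) \to 0$: since the quotient is $\epsilon_1$-torsion, its contribution to $\alext{1}{A}{M}{-}$ is $\epsilon_1$-torsion by Remark~\ref{rem:homalmzero}, and we are reduced (up to an $\epsilon_1$-factor) to showing $\alext{1}{A}{M}{\im(\psi)}$ is almost zero. Applying $\alhom{A}{M}{-}$ to $0 \to \im(\psi) \to A^n \to \coker(\psi) \to 0$ and using that $A^n$ is projective identifies this Ext with the cokernel of $\alhom{A}{M}{A^n} \to \alhom{A}{M}{\coker(\psi)}$. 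The module $\coker(\psi)$ is almost isomorphic to $M$ via a map with $\epsilon_1$-torsion kernel and cokernel, and flatness of $M$ — by killing the relevant Tor groups and preserving the $\epsilon_1$-almost isomorphism under tensor operations — is exactly what forces this cokernel to be annihilated by a power of $\epsilon_1$. Letting $\epsilon_1$ vary over $\ideal{m}$ then produces the desired splitting. The main obstacle is unambiguously this last step: the bridging from Tor-vanishing (the content of flatness) to Ext-vanishing in the almost category through only an approximate rather than exact presentation of $M$. Executing this rigorously demands careful bookkeeping of $\epsilon$-powers through the long exact sequences, and is carried out in detail in \cite[Proposition 2.4.18]{gabram}.
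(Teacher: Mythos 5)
Your part (i) is correct but follows a genuinely different route from the paper: you combine the splitting maps of Lemma~\ref{lem:almprojsplit} with the filtered-colimit characterisation of almost finite presentation, and the diagram chase showing that $\epsilon$ kills the kernel and cokernel of $\beta_M$ does work (with the usual extra $\epsilon$'s from lifting almost elements, harmless since $\ideal{m}^2=\ideal{m}$). The paper instead argues directly with the Exchange Lemma applied to an approximating map $A^n \to P$, which is self-contained; your version is shorter but leans on the ``if'' direction of the colimit criterion, which this paper only cites from Gabber--Ramero.

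Part (ii), however, has a genuine gap, and it is located exactly where you place ``the main obstacle''. The step ``using that $A^n$ is projective identifies this Ext with the cokernel of $\alhom{A}{M}{A^n} \to \alhom{A}{M}{\coker(\psi)}$'' is not correct: projectivity of $A^n$ controls $\alext{1}{A}{A^n}{-}$, i.e.\ the first variable, whereas in your long exact sequence the term following that cokernel is $\alext{1}{A}{M}{A^n}$, whose (almost) vanishing is a special case of the almost projectivity of $M$ that you are trying to establish\textemdash so the reduction is circular. More fundamentally, flatness of $M$ is a statement about $\tor{1}{A}{M}{-}$ and gives no direct control of $\alext{1}{A}{M}{-}$ or of cokernels of maps out of $\alhom{A}{M}{-}$; some bridge from Tor to Hom is indispensable, and in the paper that bridge is Lemma~\ref{lem:finpresflat}: for the finitely presented approximation $P$ cut out by $\psi$, with dual $P'$, one has $\tor{1}{A_*}{P'}{M_*} \simeq \hom{A_*}{P}{M_*}/\im\bigl(\hom{A_*}{P}{A_*}\otimes_{A_*} M_*\bigr)$, so flatness of $M$ makes $\hom{A_*}{P}{A_*}\otimes_{A_*} M_* \to \hom{A_*}{P}{M_*}$ almost surjective. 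This is what lets one write $\gamma\,\overline{\phi_*}=\psi_1\otimes m_1+\dots+\psi_k\otimes m_k$, hence factor $\delta\epsilon\gamma\cdot 1_M$ through a finite free module and conclude by Lemma~\ref{lem:almprojsplit}; no Ext-vanishing statement for $M$ in the first variable is ever proved or needed. Your final appeal to \cite[Proposition 2.4.18]{gabram} therefore defers not ``$\epsilon$-bookkeeping'' but the missing idea itself (the duality lemma), and that citation is the very result being proved, so part (ii) as written does not constitute a proof.
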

\begin{proof}
See~\cite[Proposition 2.4.18]{gabram}.
\end{proof}
We finish with some definitions:
\begin{definition}\label{def:weaketale}
\begin{enumerate}
\item We say that $\phi$ is flat (resp. fairthfully flat, resp. almost projective) if $B$ is a flat (resp. faithfully flat, resp. almost projective) A-module.
\item We say that $\phi$ is almost finite (resp. finite) if $B$ is almost finitely generated (resp. finitely generated) as an $A$-module.
\item We say that $\phi$ is weakly unramified (resp. unramified) if $B$ is a flat (resp. almost projective) $B \otimes_A B$-module (via the morphism $\mu_{B/A}$).
\item $\phi$ is weakly \'{e}tale (resp. \'{e}tale) if it is flat and weakly unramified (resp. unramified).
\end{enumerate}
\end{definition}
\begin{prop}\label{prop:assortlem}
Let $\phi: A \to B$ and $\psi: B \to C$ be morphisms of almost algebras.
\begin{enumerate}[(i)]
\item (Base change) Let $A \to A'$ be any morphism of $R^a$-algebras; if $\phi$ is flat (rep. almost projective, resp. faithfully flat, resp. almost finite, resp. weakly unramified, resp. unramified, resp. weakly \'{e}tale, resp. \'{e}tale), then the same holds for $\phi \otimes_A 1_{A'}$.
\item (Composition) If both $\phi, \psi$ are flat (resp. almost projective ...), then so is $\psi \circ \phi$.
\item  If $\phi$ is flat, and $\psi \circ \phi$ is faithfully flat, then $\phi$ is faithfully flat.
\item If $\phi$ is weakly unramified and $\psi \circ \phi$ is flat (resp. weakly \'{e}tale), then $\psi$ is flat (resp. weakly \'{e}tale).
\item If $\phi$ is unramified and $\psi \circ \phi$ is \'{e}tale, then $\psi$ is \'{e}tale.
\item $\phi$ is faithfully flat if and only if it is a monomorphism and $B/A$ is a flat $A$-module.
\item $\phi$ is almost finite and weakly unramified, then $\phi$ is unramified
\item If $\psi$ is faithfully flat and $\psi \circ \phi$ is flat (resp. weakly unramified), then $\phi$ is flat (resp. weakly unramified).
\end{enumerate}
\end{prop}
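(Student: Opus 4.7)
The proposition is a compendium of structural lemmas, and the plan is to dispatch them using the tensor-algebraic toolkit already in place: the base-change isomorphisms for $\otimes$ and alHom, the splitting criterion of Lemma~\ref{lem:almprojsplit}, the identifications $\tor{i}{A}{S}{T} \simeq \tor{i}{A_*}{S_*}{T_*}^a$ and $\alext{i}{A}{S}{T} \simeq \ext{i}{A_*}{S_*}{T_*}^a$ from Lemma~\ref{lem:deralmloc}, and Theorem~\ref{thm:presented}.

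For (i) and (ii), I would verify each property in turn. Flatness and faithful flatness follow from the associativity $(B \otimes_A A') \otimes_{A'} M \simeq B \otimes_A M$ for base change, and from $(C \otimes_B -) \circ (B \otimes_A -) \simeq C \otimes_A -$ for composition. Almost projectivity is handled via Lemma~\ref{lem:almprojsplit}: the splittings $v_\epsilon \circ u_\epsilon = \epsilon \cdot 1_B$ persist under $- \otimes_A A'$ and stack under composition. Almost finiteness is clear from cardinalities of generating sets. The weakly unramified, unramified, and (weakly) \'{e}tale cases reduce to the previous ones via the identity $(B \otimes_A A') \otimes_{A'} (B \otimes_A A') \simeq (B \otimes_A B) \otimes_A A'$, which transports the $(B \otimes_A B)$-module structure on $B$ through the base change.

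Parts (iii), (vi), and (viii) are standard faithfully-flat descent arguments in the almost setting. Part (iii): $B \otimes_A M \simeq 0$ forces $C \otimes_A M \simeq C \otimes_B (B \otimes_A M) \simeq 0$, giving $M \simeq 0$ by faithful flatness of $\psi \circ \phi$. Part (vi): $\phi$ faithfully flat implies $\ker(\phi) \simeq 0$ by tensoring with $B$; conversely, for $\phi$ mono with $B/A$ flat, the long exact Tor sequence for $0 \to A \to B \to B/A \to 0$ gives $M \hookrightarrow M \otimes_A B$, whence faithful flatness. For (viii) flatness, the identification $\tor{i}{A}{B}{M} \otimes_B C \simeq \tor{i}{A}{C}{M} \simeq 0$ (valid because $\psi$ is flat and $\psi \circ \phi$ is flat) combined with faithful flatness of $\psi$ forces $\tor{i}{A}{B}{M} \simeq 0$. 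The weakly unramified case of (viii) descends flatness of $B$ over $B \otimes_A B$ through the faithfully flat ring map $B \otimes_A B \to C \otimes_A C$, reducing to a flatness statement over $C \otimes_A C$ that follows from the weak unramification of $\psi \circ \phi$.

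For (iv) and (v), the central manipulation exploits weak unramification of $\phi$: $B$ is a flat $(B \otimes_A B)$-module via $\mu_{B/A}$. For any $B$-module $M$, one has the natural identification
\[ M \otimes_B C \simeq (M \otimes_A C) \otimes_{B \otimes_A B} B, \]
where $M \otimes_A C$ carries a $(B \otimes_A B)$-module structure through left multiplication on $M$ and through $\psi$ on $C$. Flatness of $\psi \circ \phi$ makes $M \mapsto M \otimes_A C$ exact, and flatness of $B$ over $B \otimes_A B$ makes $- \otimes_{B \otimes_A B} B$ exact; composing yields exactness of $- \otimes_B C$, i.e., flatness of $\psi$. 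The unramified case of (v) runs the analogous computation with alExt and invokes Theorem~\ref{thm:presented}. Part (vii) is the most delicate: flatness of $B$ over $B \otimes_A B$ is hypothesised, so by Theorem~\ref{thm:presented}(ii) it suffices to establish almost finite presentation of $B$ as a $(B \otimes_A B)$-module. Almost finite generation by $1$ is free from the surjection $\mu_{B/A}$; promoting this to a presentation requires controlling the diagonal ideal via a uniformity argument in the spirit of Lemma~\ref{lem:exchange2}, leveraging almost finite presentation of $B$ as an $A$-module to bound the kernel of an approximating free surjection. I expect this last step to be the main technical obstacle.
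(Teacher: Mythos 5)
Most of your sketch is sound---in particular the flat half of (viii) (your Tor base-change argument is in substance the paper's) and the identification $M\otimes_B C\simeq(M\otimes_A C)\otimes_{B\otimes_A B}B$ driving (iv). But the weakly unramified half of (viii), which is the one clause the paper proves in detail precisely because it is reused later, has a genuine gap. You propose to descend flatness of $B$ over $B\otimes_A B$ along the faithfully flat map $B\otimes_A B\to C\otimes_A C$. Under that base change the module $B$ becomes $B\otimes_{B\otimes_A B}(C\otimes_A C)\simeq C\otimes_B C$, not $C$, so the ``flatness statement over $C\otimes_A C$'' you reduce to is flatness of $C\otimes_B C$ over $C\otimes_A C$---which, given the faithfully flat descent you are invoking, is \emph{equivalent} to the conclusion being proved, and it does not follow from weak unramification of $\psi\circ\phi$ (that only gives flatness of $C$ over $C\otimes_A C$). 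The paper's route avoids this: view $\mu_{B/A}\colon B\otimes_A B\to B$ as a map of $B$-algebras and base change along the faithfully flat $\psi\colon B\to C$, so that flatness of $B\otimes_A B\to B$ is equivalent to flatness of $B\otimes_A C\to C$; the latter factors as $B\otimes_A C\to C\otimes_A C\to C$, where the first arrow is flat as a base change of $\psi$ and the second is flat by weak unramification of $\psi\circ\phi$, and (i), (ii) conclude.

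Two smaller problems. For the almost projective cases of (i) and (ii) you appeal to Lemma~\ref{lem:almprojsplit}, but that lemma presupposes almost finite generation, which $B$ need not have over $A$; the clean argument is adjunction, $\alhom{A'}{B\otimes_A A'}{N}\simeq\alhom{A}{B}{N}$ for $A'$-modules $N$ together with exactness of restriction of scalars, and for composition $\alhom{A}{C}{N}\simeq\alhom{B}{C}{\alhom{A}{B}{N}}$. In (vii) the hypothesis is almost finite \emph{generation} of $B$ over $A$ (not almost finite presentation, which you invoke), and the step you defer as ``the main technical obstacle'' is exactly where the content lies, so as written this part is incomplete; it is closed by observing that $\ker(\mu_{B/A})$ is almost generated over $B\otimes_A B$ by the elements $b\otimes 1-1\otimes b$ with $b$ running over an almost generating set of $B$ over $A$, so $B$ is almost finitely presented over $B\otimes_A B$ and Theorem~\ref{thm:presented}(ii) gives almost projectivity. (Relatedly, (v) should not need Theorem~\ref{thm:presented} at all: since $C\otimes_A C\to C\otimes_B C$ is an epimorphism, almost projectivity and flatness of $C$ over $C\otimes_A C$ pass to $C\otimes_B C$ by restriction of scalars; the same observation supplies the weakly \'{e}tale part of (iv), which your sketch leaves out.)
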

\begin{proof}
We detail the proof of (viii) as will need this later on, for the rest see \cite[Lemma 3.1.2.]{gabram}.\\
\indent (viii): If $\psi \circ \phi$ is flat, then we conclude by noting $-\otimes_A C = (-\otimes_A B)\otimes_{B} C$. As a consequence, we have that if $D$ is an $A$-algebra such that $A \to D$ is faithfully flat, then $B \to C$ is flat if and only if $B \otimes_A D \to C \otimes_A D$ is flat (we use (i) also). \\
\indent Assume $A \to C$ is weakly unramified, or indeed that $C \otimes_A C \to C$ is flat. Note that showing $B \otimes_A B \to B$ is flat is equivalent to showing that $B \otimes_A C \to C$ is flat, but by (i) and our supposition, we know that each map in the composition $B \otimes_A C \to C \otimes_A C \to C$ is flat, hence conclude by (ii).
\end{proof}
The following is an expected outcome of the definitions is crucial for applications to \textit{almost purity}, see Theorem~\ref{thm:almpurcharp}.
\begin{prop}
A morphism $\phi: A \to B$ is unramified if and only if there exists an almost element $e_{B/A} \in (B \otimes_A B)_*$ such that
\begin{enumerate}[(i)]
\item $e_{B/A}^2 = e_{B/A}$
\item $(\mu_{B/A})_*(e_{B/A})=1_{B}$
\item $x \cdot e_{B/A} = 0 $ for all $x \in I_{B_*/A_*}$
\end{enumerate}
\end{prop}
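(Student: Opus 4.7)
The plan is to translate both sides of the equivalence into statements about the short exact sequence
\[0 \longrightarrow I_{B/A} \longrightarrow B \otimes_A B \xrightarrow{\mu_{B/A}} B \longrightarrow 0\]
in $C\text{-Mod}$, where I set $C := B \otimes_A B$ and $I := I_{B/A}$: unramifiedness will amount to the splitting of this sequence, and the resulting sections will be parametrised by almost elements $e_{B/A} \in C_*$ satisfying (i)--(iii).

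For the ``if'' direction, I start with an $e_{B/A} \in C_*$ satisfying (i)--(iii) and build a $C$-linear section $s \colon B \to C$ in the almost category. Using $B \simeq C/I$ in $C\text{-Mod}$, the formula $s(\overline{c}) := e_{B/A} \cdot c$ makes sense (well-defined by (iii)), and $\mu_{B/A} \circ s = 1_B$ follows from (ii) via $\mu(e_{B/A} c) = \mu(e_{B/A})\mu(c) = \mu(c)$. Thus the sequence splits, exhibiting $B$ as a direct $C$-summand of the free (and hence almost projective) $C$-module $C$; so $B$ is almost projective over $C$, i.e.\ $\phi$ is unramified.

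For the ``only if'' direction, the almost projectivity of $B$ over $C$ forces $\alext{1}{C}{B}{I} = 0$, and the long exact sequence obtained by applying $\alhom{C}{B}{-}$ to the SES provides a splitting $s \colon B \to C$ in $C\text{-Mod}$. I extract $e_{B/A}$ by considering the idempotent endomorphism $s \circ \mu_{B/A} \colon C \to C$; under the identification $\alhom{C}{C}{C} \simeq C$ this endomorphism is multiplication by a unique element $e_{B/A} \in C_*$, so $e_{B/A} = s(1_B)$. Properties (i)--(iii) then follow directly: (iii) from the factorisation of $s \circ \mu_{B/A}$ through $B = C/I$, giving $I \cdot e_{B/A} = 0$; (ii) by evaluating $\mu_{B/A} \circ s = 1_B$ at $1$; and (i) from the identity $(s \circ \mu_{B/A}) \circ (s \circ \mu_{B/A}) = s \circ (\mu_{B/A} \circ s) \circ \mu_{B/A} = s \circ \mu_{B/A}$, i.e.\ the idempotency of $s \circ \mu_{B/A}$, which under the ring identification becomes $e_{B/A}^2 = e_{B/A}$.

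The main obstacle I anticipate is ensuring that the abstract splitting provided by the vanishing of $\alext^1$ yields the \emph{exact} equations in (i)--(iii), not merely equations modulo almost-zero. This reduces to verifying that $\hom{C_*}{C_*}{C_*} = C_*$ on the nose (every $C_*$-linear endomorphism is multiplication by $f(1)$, using that $C_*$ has no $\mathfrak{m}$-torsion by Remark~\ref{rem:initrem}(iv)) and that the functor $(-)_*$ transports the splitting identities from $C\text{-Mod}$ to $C_*\text{-Mod}$ without loss; once these are pinned down, the splitting in the abelian category $C\text{-Mod}$ translates into the honest ring-theoretic relations required of $e_{B/A}$.
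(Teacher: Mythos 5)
Your ``if'' direction is sound and is essentially the standard separability-idempotent argument (which is also how Gabber--Ramero handle it; note the paper itself only cites \cite[Proposition 3.1.4]{gabram} rather than proving this): multiplication by $e_{B/A}$ kills $I$ by (iii), hence factors through $B \simeq C/I$, and (ii) makes the resulting map a $C$-linear section of $\mu_{B/A}$, so $B$ is a retract of $C$ and therefore almost projective.

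The ``only if'' direction, however, has a genuine gap at its very first step. From almost projectivity you may conclude that $\alext{1}{C}{B}{I}$ vanishes and hence that $\alhom{C}{B}{C} \to \alhom{C}{B}{B}$ is an epimorphism \emph{of almost modules}, but this does not produce an honest section $s$ with $\mu_{B/A}\circ s = 1_B$ in $C\text{-Mod}$. Two things go wrong: first, $\alext{}{}{}{}$ is the derived \emph{internal} Hom, and its vanishing does not imply the vanishing of the external group $\text{Ext}^1_{C\text{-Mod}}(B,I)$ that actually classifies splittings (the two differ by derived functors of $(-)_*$, which need not vanish); second, even phrased via the epimorphism of alHom's, an epimorphism in the almost category only lets you lift a given almost element up to a factor $\epsilon \in \ideal{m}$ (Remark~\ref{rem:initrem}(ii)). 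This is exactly why the paper's own Lemma~\ref{lem:almprojsplit} is stated in terms of factoring $\epsilon\cdot 1_M$, not $1_M$: almost projective objects admit only $\epsilon$-splittings in general, and your argument silently assumes an exact one. Your closing paragraph senses that something must be checked ``on the nose,'' but locates the difficulty in the identification $\hom{C_*}{C_*}{C_*}=C_*$ (which is harmless) rather than in the nonexistence of the splitting itself.

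The standard repair, which is what Gabber--Ramero do, is a limiting argument: for each $\epsilon \in \ideal{m}$ choose $s_\epsilon: B \to C$ with $\mu_{B/A}\circ s_\epsilon = \epsilon\cdot 1_B$ and set $e_\epsilon := (s_\epsilon)_*(1_{B_*})$. Then $\ker((\mu_{B/A})_*)\cdot e_\epsilon = 0$ and $(\mu_{B/A})_*(e_\epsilon)=\epsilon$, whence $e_\delta e_\epsilon = \delta e_\epsilon = \epsilon e_\delta$ for all $\epsilon,\delta$. This compatibility lets you assemble the family $(e_\epsilon)$ into a single almost element $e_{B/A} \in (B\otimes_A B)_*$ with $\epsilon\, e_{B/A} = e_\epsilon$, and the exact identities (i)--(iii) then follow because $\ideal{m}$ kills each discrepancy and $(B\otimes_A B)_*$ and $B_*$ have no $\ideal{m}$-torsion (Remark~\ref{rem:initrem}(iv)). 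With that replacement your extraction of (i)--(iii) from $C$-linearity goes through; without it, the ``only if'' half does not stand.
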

\begin{proof}
See \cite[Proposition 3.1.4]{gabram}.
\end{proof}
In what follows, we will call $B$ being almost finitely presented \'{e}tale if $B$ is an almost finitely presented $A$-module in addition to being \'{e}tale over $A$.

\begin{thm}\label{thm:frobpushout}
Let $f: A \to B$ be a weakly \'{e}tale morphism of $\mathbb{F}_p$-almost algebras. Then the diagram:
\[\begin{tikzcd}
B \arrow[r, "\Phi_B"] & B\\
A \arrow[u, "f"] \arrow[r, "\Phi_A"] & A \arrow[u, "f"]
\end{tikzcd}\]
is a push-out. In other words, the canonical morphism $g: B \otimes_{A, \Phi_A} A \to B_{\Phi_B}$ is an isomorphism of $A$-modules.
\end{thm}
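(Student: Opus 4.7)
The plan is to show that the canonical map $g \colon B' := B \otimes_{A,\Phi_A} A \to B$ (sending $b \otimes a \mapsto \Phi_B(b) \cdot f(a) = b^p f(a)$, arising from the universal property applied to the pair $(\Phi_B, f)$) is an isomorphism of $A$-modules. My approach has three stages: establish that $g$ is weakly étale using the cancellation lemma, produce a ring map $h \colon B \to B'$ that is a ``Frobenius-inverse'' to $g$, and then combine the two pieces of structure to force $g$ to be an isomorphism.

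First I would show that the structure map $A \to B'$ sending $a \mapsto 1 \otimes a$ is weakly étale, since it is the base change of the weakly étale $f$ along $\Phi_A \colon A \to A$, by Prop~\ref{prop:assortlem}(i). Next, observe that the composition $A \to B' \xrightarrow{g} B$ equals $f$, because $g(1 \otimes a) = 1^p f(a) = f(a)$. Now I apply Prop~\ref{prop:assortlem}(iv) with $\phi \colon A \to B'$ (weakly unramified as it is weakly étale) and $\psi := g$, whose composition $\psi \circ \phi = f$ is weakly étale by hypothesis; this immediately yields that $g$ is weakly étale.

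For the second stage I introduce $h \colon B \to B'$, $b \mapsto b \otimes 1$. A direct computation gives $g \circ h = \Phi_B$, since $g(b \otimes 1) = b^p$. Using the tensor identity $f(a) \otimes 1 = 1 \otimes \Phi_A(a) = 1 \otimes a^p$ in $B'$, I compute $h \circ g(b \otimes a) = h(b^p f(a)) = b^p f(a) \otimes 1 = (b \otimes 1)^p \cdot (f(a) \otimes 1) = b^p \otimes a^p = \Phi_{B'}(b \otimes a)$. So $g$ and $h$ compose to absolute Frobenius on either side, exhibiting $h$ as a retraction of $g$ up to Frobenius.

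The hard part is the final stage. Since $g$ is weakly étale, the multiplication $\mu_g \colon B \otimes_{B'} B \to B$ is flat and surjective, and standard manipulations with the short exact sequence $0 \to I_g \to B \otimes_{B'} B \to B \to 0$ give $I_g = I_g^2$ (almost). The key input from characteristic $p$ is that for $b \in B$, the element $b \otimes 1 - 1 \otimes b \in I_g$ satisfies $(b \otimes 1 - 1 \otimes b)^p = b^p \otimes 1 - 1 \otimes b^p$; but $b^p = g(h(b))$ is pulled from $B'$, so the tensor relation over $B'$ forces $b^p \otimes 1 = 1 \otimes b^p$, and hence $(b \otimes 1 - 1 \otimes b)^p = 0$. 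I then need to argue that the combination of $I_g = I_g^2$ and the fact that $I_g$ is generated by elements nilpotent of order $p$ forces $I_g = 0$ almost. This is where the Frobenius retraction is essential: the example $k \times k \to k$ shows that weak étaleness and surjectivity alone do not imply injectivity, so the proof must genuinely exploit that $h$ exists to rule out such splittings. Once $I_g = 0$ is obtained, $\mu_g$ is an isomorphism, so $g$ is an epimorphism of rings; combining this with the faithful flatness deduced from the weakly étale structure (the Frobenius section $h$ witnesses that $g$ hits every almost element after $p$-th-power adjustment) gives that $g$ is an isomorphism, as required.
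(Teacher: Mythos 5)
Your first two stages coincide with the paper's own argument: base change along $\Phi_A$ plus the cancellation statement Proposition~\ref{prop:assortlem}(iv) gives that $g$ is weakly \'{e}tale, and the map $h(b)=b\otimes 1$ with $g\circ h=\Phi_B$ and $h\circ g=\Phi_{B'}$ is exactly the ``inverse up to Frobenius'' used there. The genuine gap is in your third stage, at precisely the point where the theorem is hard. First, the step you explicitly defer --- that $I_g=I_g^2$ together with $I_g$ being generated by elements $x$ with $x^p=0$ forces $I_g=0$ (almost) --- is not a valid implication and no argument is offered for it: an idempotent nil ideal can be nonzero. For instance, in $V/(t)$ with $V=k[t^{1/p^{\infty}}]$, the ideal generated by the images of the $t^{1/p^n}$, $n\ge 1$, is idempotent, nil, and nonzero; and ideals of exactly this shape are the typical ones occurring in the almost setting, so nothing about working almost rescues the implication. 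You correctly sense that the retraction $h$ must be the extra input, but you never actually use it, so the decisive step is missing. Second, your final sentence extracts faithful flatness of $g$ ``from the weakly \'{e}tale structure'': weak \'{e}taleness does not imply faithful flatness, and the parenthetical remark that $g$ hits $p$-th powers is not an argument for it. This is exactly the step the paper singles out as the one needing a genuinely new argument in the almost category, since the classical proof gets faithful flatness from Frobenius being the identity on the spectrum. Note also that even granting $I_g=0$, you would only know that $g$ is a flat ring epimorphism, which does not imply isomorphism (localizations are flat epimorphisms); so the conclusion really does hinge on the faithful flatness you have not established.

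For comparison, the paper's route after your stage two is: prove first that $g$ is faithfully flat --- in the almost setting by showing $B\otimes_C N\simeq 0\Rightarrow N\simeq 0$ (with $C:=B\otimes_{A,\Phi_A}A$), reducing via annihilators and flatness to cyclic modules and then using the Frobenius retraction to conclude that the image of Frobenius on the relevant module of almost elements is almost zero, hence the module is --- in particular $g$ is injective. Then flatness of $\mu_{B/C}$ together with the section $g\otimes_C 1_B$ (whose other composite is again a Frobenius) shows $\mu_{B/C}$ is injective, hence an isomorphism, so $g\otimes_C 1_B$ is an isomorphism, and faithfully flat descent transfers this to $g$ itself. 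If you complete the faithful-flatness argument using $h$ along these lines, your nilpotence computation on $I_g$ becomes unnecessary; as written, your proposal stops short exactly where the real work begins.
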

\begin{proof}
One replicates the proof~\cite[Theorem 3.5.13]{gabram}, noting that it doesn't need the flatness conditions on $\tilde{\ideal{m}}$ that was assumed from the begining in Section 3.5, but rather that $\tilde{\ideal{m}}$ satisfies Condition (B), as we have assumed.
\end{proof}
\subsection{Descent}
In what follows, we have collected various results in \cite{gabram} necessary for us. Apart from some slight modifications in various proofs, the main contribution is the linearity in exposition.\\
\indent Given a morphism of almost $R^a$-algebras $A \to B$, we would like to ask under what conditions information about an $A$-module $M$ can be extracted from $M_B := M\otimes_A B$ as a $B$-module. We have the following:
\begin{thm}\label{thm:descent1}
Suppose that the morphism $A \to B$ satisfies the following property: There exists an integer $m\ge 0$ such that for any $A$-module $N$, $\text{Ann}_{A}(N_B)^m \subset \text{Ann}_{A}(N)$. (*)  Then for an $A$-module $M$:
\begin{enumerate}[(i)]
\item If $M_B$ is an almost finitely generated $B$-module, then $M$ is an almost finitely generated $A$-module.
\item If $\tor{A}{1}{B}{M}=0$ and $M_B$ is almost finitely presented over $B$, then $M$ is almost finitely presented over $A$.
\end{enumerate}
\end{thm}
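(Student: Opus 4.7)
The plan is to prove (i) directly from hypothesis (*) and to deduce (ii) from (i) via a kernel computation enabled by the $\mathrm{Tor}$-vanishing hypothesis.

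For (i), let $\ideal{m}_0 \subset \ideal{m}$ be a finitely generated ideal, and pick (using Condition~(B)) a finitely generated $\ideal{m}_1 \subset \ideal{m}$ with $\ideal{m}_0 \subset \ideal{m}_1^m$, where $m$ is as in (*). Since $M_B$ is almost finitely generated over $B$, there is a finitely generated $B$-submodule $N_B \subset M_B$ with $\ideal{m}_1 M_B \subset N_B$. After absorbing a harmless factor of $\ideal{m}$ (shrinking $\ideal{m}_1$ very slightly), the generators of $N_B$ may be taken to be pure tensors $m_{ij} \otimes b_{ij}$ with $m_{ij} \in M_*$ and $b_{ij} \in B_*$. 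Let $M_0 \subset M$ be the finitely generated $A$-submodule generated by the $m_{ij}$. Then $\ideal{m}_1 M_B \subset \im(M_0 \otimes_A B \to M_B)$, which reads as $\ideal{m}_1 \subset \text{Ann}_A\bigl((M/M_0) \otimes_A B\bigr)$. Applying (*) yields $\ideal{m}_1^m \subset \text{Ann}_A(M/M_0)$, whence $\ideal{m}_0 M \subset M_0$, as required.

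For (ii), part (i) already gives that $M$ is almost finitely generated over $A$. Given $\epsilon, \delta \in \ideal{m}$, pick $g: A^r \to M$ with $\epsilon \cdot \coker(g) = 0$, and set $K := \ker(g)$ and $I := \im(g)$. Since $\tor{1}{A}{B}{M} = 0$ and $\epsilon \cdot (M/I) = 0$, the long exact $\mathrm{Tor}$ sequences for $0 \to I \to M \to M/I \to 0$ and $0 \to K \to A^r \to I \to 0$ show that $\tor{1}{A}{B}{I}$ and $\ker(I \otimes_A B \to M_B)$ are both killed by $\epsilon$. Chaining the two maps, $K \otimes_A B$ and $\ker(B^r \to M_B)$ are $\epsilon$-close in the uniform structure on $\cal{M}(B)$.

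Now $M_B$ is almost finitely presented over $B$, so Lemma~\ref{lem:exchange2} applied to $B^r \to M_B$ shows that $\ker(B^r \to M_B)$ is almost finitely generated over $B$. As almost finite generation is a closed property in the uniform structure, $K \otimes_A B$ is almost finitely generated over $B$ as well. Applying (i) with $M$ replaced by the $A$-module $K$ then yields that $K$ is almost finitely generated over $A$. Finally, choosing a finitely generated $K_0 \subset K$ with $\delta K \subset K_0$ and any surjection $A^s \to K_0$ produces a three term complex $A^s \to A^r \to M$ witnessing that $M$ is almost finitely presented. The main obstacle is part (ii): one must carefully track the $\epsilon$-torsion corrections to identify $K \otimes_A B$ with $\ker(B^r \to M_B)$, and this is precisely where both $\tor{1}{A}{B}{M} = 0$ and the closure-under-uniform-closeness of almost finite generation are essential.
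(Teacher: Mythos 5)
Your part (i) is fine and is essentially the paper's own argument: lift generators of an approximating finitely generated $B$-submodule of $M_B$ through the almost isomorphism $M_*\otimes_{A_*}B_*\to (M_B)_*$, and apply (*) to $(M/M_0)\otimes_A B$; the difference is only in how powers of the test ideal are bookkept.

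Part (ii), however, has a genuine gap at the step ``as almost finite generation is a closed property in the uniform structure, $K\otimes_A B$ is almost finitely generated, so (i) applies to $K$.'' Closedness of $\overline{\cup_n\cal{M}_n(B)}$ only helps if the class of $K\otimes_A B$ lies in the \emph{closure}, i.e.\ if for \emph{every} finitely generated $\ideal{m}_0\subset\ideal{m}$ it is $\ideal{m}_0$-close to a finitely generated module. Your computation gives much less: since all you know about $\coker(g)$ is that it is killed by the single element $\epsilon$, the kernel of $K\otimes_A B\to \ker(B^r\to M_B)$ (an image of $\tor{1}{A}{B}{I}$, hence of $\tor{2}{A}{B}{M/I}$) and $\ker(I\otimes_A B\to M_B)$ are killed only by powers of $\epsilon$, not by $\ideal{m}$; likewise Lemma~\ref{lem:exchange2} controls $\ker(B^r\to M_B)$ only for ideals inside $\ideal{m}\cdot\text{Ann}_B(\coker(g)\otimes_A B)\supset\ideal{m}\epsilon$. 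So both modules are approximated only within the one entourage determined by $\epsilon$, which does not place them in the closure, and neither ``$\ker(B^r\to M_B)$ is almost finitely generated'' nor the application of (i) to $K$ is justified. Even if you patch this by carrying the $\epsilon$-factors along, every correction is a power of the particular $\epsilon$ attached to $g$, so at the end $\delta$ is constrained to an ideal built from $\epsilon$-multiples, whereas condition (b) requires $\epsilon$ and $\delta$ to be independent; repairing that forces the ideal-power bookkeeping of criterion (c). The paper's proof avoids the whole issue: it never asserts that $\ker(\phi_B)$ or $K_B$ is almost finitely generated, but instead lifts a generating set of the finitely generated approximant $N\subset\ker(\phi_B)$ to almost elements of $\ker(\phi)$ (possible because $\ker(\phi)\otimes_A B\to\ker(\phi_B)$ is an epimorphism whose kernel is controlled by $\tor{1}{A}{M}{B}=0$), forms $P:=A^k/N_0$, and then bounds $\ker(\overline{\phi})$ and $\coker(\overline{\phi})$ directly through the annihilator estimates of Lemma~\ref{lem:boundkercoker} under (*). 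Either adopt that route, or redo your argument with a finitely generated ideal $\ideal{m}_1$ in place of $\epsilon$ and conclude via criterion (c) with $\ideal{m}_0\subset\ideal{m}_1^N$.
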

The condition (*) imposed on $M$ at first glance might seem rather unnatural, but notice $M \otimes_A B \simeq 0 \Rightarrow A = \text{Ann}_{A}(M_B)^m \subseteq \text{Ann}_{A}(M) \subseteq A$ so $M \simeq 0$. Since we're in the almost setting, the condition (*) could be viewed as a modification of the condition that $M_B \simeq 0 \Longleftrightarrow M \simeq 0$ to deal with the `limiting' arguments that arise with almost finitely generated/almost finitely presented objects. The above discussion leads to:
\begin{cor}\label{cor:ffdescent}
Let $A \to B$ is faithfully flat and $M$ be an $A$-module.
\begin{enumerate}[(i)]
\item If $M_B$ is almost finitely generated or almost finitely presented over $B$, then $M$ has the same property over $A$.
\item If $M_B$ is almost finitely generated projective over $B$, then $M$ has the same property over $A$.
\item If $A \to B$ is also almost finitely presented as an $A$-module, then if $M_B$ is almost projective, $M$ has the same property over $A$.
\end{enumerate}
\end{cor}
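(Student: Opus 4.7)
The plan is to treat each part in turn, combining Theorem~\ref{thm:descent1} with Theorem~\ref{thm:presented}, and, for (iii), a projection formula for almost finitely generated projective algebras.

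For (i), I first verify that hypothesis $(*)$ of Theorem~\ref{thm:descent1} is satisfied with $m=1$ whenever $A\to B$ is faithfully flat: if an almost element $a\in A_*$ annihilates $N_B$, then $(aN)_B\simeq aN\otimes_A B\simeq 0$, so $aN\simeq 0$ by faithful flatness, giving $\ann{N_B}{A}\subseteq\ann{N}{A}$. Theorem~\ref{thm:descent1}(i) then descends the almost finitely generated property; for almost finitely presented, the flatness of $B$ supplies the vanishing $\tor{1}{A}{B}{M}\simeq 0$ required to invoke Theorem~\ref{thm:descent1}(ii).

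For (ii), since $M_B$ is almost finitely generated and almost projective over $B$, Theorem~\ref{thm:presented}(i) makes $M_B$ almost finitely presented over $B$, and (i) descends this to $M$ over $A$. Almost projectivity of $M_B$ over $B$ also implies $B$-flatness of $M_B$, and $A$-flatness of $M$ follows by faithfully flat descent: any mono $N'\hookrightarrow N$ of $A$-modules remains mono after tensoring with the flat $A$-algebra $B$, yielding $N'_B\hookrightarrow N_B$; since $M_B$ is $B$-flat, $M_B\otimes_B N'_B\hookrightarrow M_B\otimes_B N_B$; identifying these with $(M\otimes_A N')_B\hookrightarrow (M\otimes_A N)_B$ and applying faithful flatness of $B$ reflects the injection back to $M\otimes_A N'\hookrightarrow M\otimes_A N$. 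Theorem~\ref{thm:presented}(ii) then converts almost finitely presented plus flat into almost projective.

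Part (iii) is the most involved. Under the hypothesis that $B$ is almost finitely presented and flat over $A$, Theorem~\ref{thm:presented}(ii) makes $B$ almost finitely generated projective as an $A$-module. The key identity is the projection formula $\alhom{A}{M}{N}\otimes_A B\simeq\alhom{A}{M}{N\otimes_A B}$ in the almost category, proved by reducing via the $\epsilon$-splittings of Lemma~\ref{lem:almprojsplit} to the trivial case of free $B$; combined with tensor-Hom adjunction, this yields a natural isomorphism $\alhom{A}{M}{N}\otimes_A B\simeq\alhom{B}{M_B}{N_B}$. Given a short exact sequence $0\to N'\to N\to N''\to 0$ of $A$-modules, flatness of $B$ produces an exact sequence of $B$-modules, and $\alhom{B}{M_B}{-}$ preserves this exactness by $B$-almost projectivity of $M_B$. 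Transporting the resulting exact sequence back across the isomorphism and descending along faithful flatness of $B$ yields exactness of $\alhom{A}{M}{-}$, whence $M$ is $A$-almost projective. The main obstacle is verifying the projection formula itself: while the free case is immediate, passing to a general almost finitely generated projective $B$ requires careful bookkeeping of the $\epsilon$-dependent splittings within the uniform-space formalism of Section~1.2.
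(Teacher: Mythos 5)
Your proposal is correct and takes essentially the same route as the paper: part (i) via Theorem~\ref{thm:descent1} after noting that faithful flatness gives condition (*) with $m=1$ (and flatness gives the required Tor-vanishing), part (ii) via Theorem~\ref{thm:presented} together with faithfully flat descent of flatness, and part (iii) via the base-change isomorphism $B \otimes_A \alhom{A}{M}{N} \simeq \alhom{B}{M_B}{N_B}$ followed by faithfully flat descent of exactness. The only cosmetic difference is that you sketch a direct $\epsilon$-splitting proof of that isomorphism (which indeed works, using Lemma~\ref{lem:almprojsplit} for $B$ and naturality, with no extra uniform-space bookkeeping needed), whereas the paper cites \cite[Lemma 2.4.31]{gabram}, and you spell out the descent of flatness in (ii) that the paper leaves implicit.
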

\begin{proof}
\begin{enumerate}[(i)]
\item Follows from Theorem~\ref{thm:descent1} and the previous discussion.
\item From Theorem~\ref{thm:presented}, one concludes that $M_B$ is finitely presented, so from (i), $M$ is too. But $M$ is by definition flat, so by (ii) of the same theorem, $M$ is almost projective.
\item We know that $B$ is therefore almost finitely generated projective, so \cite[Lemma 2.4.31]{gabram} applies, which says that $B \otimes_A \alhom{A}{M}{N} \to \alhom{B}{B\otimes_A M}{ B \otimes_A N}$ is an isomorphism. Due to the faithful flatness of $B$, the proposition is evident.
\end{enumerate}
\end{proof}
For morphisms satisfying the descent condition (*) we have the following bounds:
\begin{lem}\label{lem:boundkercoker}
Assume that condition (*) holds. For any $A$-linear morphism $\phi: M \to N$, set $\phi_B := \phi \otimes_A 1_B$; then:
\begin{enumerate}[(i)]
\item $\text{Ann}_A(\coker(\phi_B))^m \subset \text{Ann}_A(\coker(\phi))$.
\item $(\text{Ann}_A(\ker(\phi_B)) \cdot \text{Ann}_A(\tor{1}{A}{B}{N}) \cdot \text{Ann}_A(\coker(\phi)))^m \subset \text{Ann}_A(\ker(\phi))$
\end{enumerate}
\end{lem}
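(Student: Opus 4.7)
The plan is to reduce both parts of the lemma to condition $(*)$ applied respectively to $\coker(\phi)$ and $\ker(\phi)$; the subtlety lies in the fact that $-\otimes_A B$ is right exact but not left exact, so $\ker(\phi)_B$ differs from $\ker(\phi_B)$ by Tor groups whose annihilators must be controlled via Remark~\ref{rem:homalmzero}.

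For (i), the proof is essentially immediate: right exactness of $-\otimes_A B$ gives a canonical isomorphism $\coker(\phi)\otimes_A B \simeq \coker(\phi_B)$, and applying condition $(*)$ to the $A$-module $\coker(\phi)$ yields precisely $\text{Ann}_A(\coker(\phi_B))^m \subseteq \text{Ann}_A(\coker(\phi))$.

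For (ii), I would first show
\[\text{Ann}_A(\ker(\phi_B))\cdot \text{Ann}_A(\tor{1}{A}{B}{N})\cdot \text{Ann}_A(\coker(\phi)) \subseteq \text{Ann}_A(\ker(\phi)_B),\]
and then apply $(*)$ to $\ker(\phi)$. To this end, factor $\phi$ as $M \twoheadrightarrow \im(\phi) \hookrightarrow N$, apply $-\otimes_A B$ to the two resulting short exact sequences, and chase the two Tor long exact sequences. The first sequence exhibits $\ker(\phi)_B$ as an extension of a submodule of $\ker(\phi_B)$ (namely $\ker(M_B\to \im(\phi)_B)$, which sits inside $\ker(M_B\to N_B)=\ker(\phi_B)$) by a quotient of $\tor{1}{A}{B}{\im(\phi)}$; thus its annihilator contains $\text{Ann}_A(\ker(\phi_B))\cdot\text{Ann}_A(\tor{1}{A}{B}{\im(\phi)})$. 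The second sequence exhibits $\tor{1}{A}{B}{\im(\phi)}$ as an extension of a submodule of $\tor{1}{A}{B}{N}$ by a quotient of $\tor{2}{A}{B}{\coker(\phi)}$, and by Remark~\ref{rem:homalmzero} the latter is annihilated by $\text{Ann}_A(\coker(\phi))$. Multiplying the two annihilator bounds together gives the displayed inclusion, after which $(*)$ delivers the $m$-th power statement.

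The main obstacle is essentially bookkeeping: because $-\otimes_A B$ is not left exact, the error between $\ker(\phi)_B$ and $\ker(\phi_B)$ must be tracked at two distinct stages, one for each factor of the factorisation $\phi=\iota\circ\pi$, which is precisely why both $\tor{1}{A}{B}{N}$ and $\coker(\phi)$ enter in (ii) while only $\coker(\phi)$ appears in (i). No deep structural input beyond condition $(*)$ and the standard annihilator remark is needed.
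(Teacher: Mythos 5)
Your argument is correct, and it proves exactly the stated inclusions: part (i) is the same one-line reduction the paper uses (right exactness plus (*) applied to $\coker(\phi)$), and in part (ii) your sandwich of $\ker(\phi)\otimes_A B$ between a quotient of $\tor{1}{A}{B}{\im(\phi)}$ and the submodule $\ker(M_B \to \im(\phi)_B)\subseteq \ker(\phi_B)$, followed by controlling $\tor{1}{A}{B}{\im(\phi)}$ via $\tor{2}{A}{B}{\coker(\phi)}$ and $\tor{1}{A}{B}{N}$, does give $\text{Ann}_A(\ker(\phi_B))\cdot\text{Ann}_A(\tor{1}{A}{B}{N})\cdot\text{Ann}_A(\coker(\phi))\subseteq \text{Ann}_A(\ker(\phi)\otimes_A B)$, after which (*) applied to $\ker(\phi)$ finishes. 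The only difference from the paper is packaging: the paper forms the two-term complex $\mathscr{C}$ given by $\phi$ and compares the horizontally and vertically filtered spectral sequences of $\mathscr{C}\otimes_A B$, extracting the exact sequences $\tor{2}{A}{\coker(\phi)}{B}\to\ker(\phi)\otimes_A B\to H_1(\text{Tot}(\mathscr{C}\otimes_A B))$ and $\coker(\tor{1}{A}{M}{B}\to\tor{1}{A}{N}{B})\to H_1(\text{Tot}(\mathscr{C}\otimes_A B))\to\ker(\phi_B)$, so its intermediate object is $H_1(\text{Tot})$ rather than your $\tor{1}{A}{B}{\im(\phi)}$. These are two renderings of the same hyper-Tor computation; your image-factorisation version is more elementary (no double complex needed) and makes the provenance of each annihilator factor transparent, while the paper's version matches the technique it already set up in Lemma~\ref{lem:unicont}(iv) and avoids introducing $\im(\phi)$ explicitly. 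Either way the final step, raising to the $m$-th power and invoking (*) for $\ker(\phi)$ and $\coker(\phi)$, is identical.
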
 
\begin{proof}
Our proof here is essentially part of the proof given in \cite[Lemma 3.2.23]{gabram}, but we include it for convenience.\\
\indent Let $\mathscr{C}$ be the complex $M \stackrel{\phi}{\longleftarrow} N$ concentrated in degrees $0$ and $1$ and consider the complex $\mathscr{C} \otimes_A B$. We have two converging spectral sequences:
\[E_{p,q}^{2,h} := \tor{q}{A}{H_{p}(\mathscr{C})}{B} \Rightarrow H_{p+q}(\text{Tot}(\mathscr{C} \otimes_A B)),\]
\[ E_{p,q}^{1,v} := \tor{q}{A}{\mathscr{C}_p}{B} \Rightarrow H_{p+q}(\text{Tot}(\mathscr{C} \otimes_A B)).\]
We immediately derive that $\coker(\phi_B)=E_{0,0}^{\infty,v} = E_{0,0}^{\infty,h} = \coker(\phi)\otimes_A B$ and the two exact sequences:
\[\tor{2}{A}{\coker(\phi)}{B} \to \ker(\phi) \otimes_A B \to H_1(\text{Tot}(\mathscr{C} \otimes_A B)),\] \[\coker(\tor{1}{A}{M}{B} \to \tor{1}{A}{N}{B}) \to H_1(\text{Tot}(\mathscr{C} \otimes_A B)) \to \ker(\phi_B).\]
So we see that $\text{Ann}_A(\coker(\phi_B))^m = \text{Ann}_A(\coker(\phi)\otimes_A B)^m \subset \text{Ann}_A(\coker(\phi))$ and from the two exact sequences:
\[(\text{Ann}_A(\ker(\phi_B)) \cdot \text{Ann}_A(\tor{1}{A}{B}{N}) \cdot \text{Ann}_A(\coker(\phi)))^m \subset \text{Ann}_A(\ker(\phi) \otimes_A B)^m \subset \text{Ann}_A(\ker(\phi)).\]
\end{proof}
Moreover, (*) comes up rather naturally in usual ring theory, and we discuss the almost variant:
\begin{lem}\label{lem:nilpotent}
Let $\phi: A \to B$ be a morphism of almost algebras that is finite with nilpotent kernel. Then, $\phi$ has property (*).
\end{lem}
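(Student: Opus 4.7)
The plan is to factor $\phi$ as the composition $A \twoheadrightarrow A/I \stackrel{\bar\phi}{\hookrightarrow} B$, where $I := \ker(\phi)$ satisfies $I^k = 0$; then $\bar\phi$ is an injective, still-finite morphism, and I would prove property (*) for each factor. The first ingredient is that property (*) is stable under composition: if $\psi_1: A \to C$ has property (*) with exponent $m_1$ and $\psi_2: C \to B$ has it with exponent $m_2$, then $\psi_2 \circ \psi_1$ has it with exponent $m_1 m_2$. This is immediate from the identification $N \otimes_A B \simeq (N \otimes_A C) \otimes_C B$: if $a \in A$ annihilates $N \otimes_A B$ then $\psi_1(a) \in C$ annihilates $(N \otimes_A C) \otimes_C B$, so by (*) for $\psi_2$, $\psi_1(a)^{m_2}$ annihilates $N \otimes_A C$, whence $a^{m_2}$ annihilates $N \otimes_A C$ as an $A$-module, whence $a^{m_1 m_2}$ annihilates $N$ by (*) for $\psi_1$.

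For the surjection $A \twoheadrightarrow A/I$ the argument is straightforward: if $a \in \text{Ann}_A(N \otimes_A (A/I)) = \text{Ann}_A(N/IN)$ then $aN \subset IN$, and iterating gives $a^j N \subset I^j N$; taking $j = k$ yields $a^k N = 0$, so exponent $k$ suffices.

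The finite injection $\bar\phi: A/I \hookrightarrow B$ is the substantive step, which I would attack via a Cayley--Hamilton / determinantal-trick argument adapted to the almost category. Writing $A$ for $A/I$ to lighten notation and choosing almost generators $b_1, \ldots, b_n \in B_*$ of $B$ over $A$ (so that for each $\epsilon \in \ideal{m}$ and each $b \in B_*$ one can write $\epsilon b = \sum_i a_i b_i$ with $a_i \in A_*$), the hypothesis $a(N \otimes_A B) = 0$ puts $\bar\phi(a)$ in $\text{Ann}_B(N \otimes_A B)$. For each $\epsilon \in \ideal{m}$ and each $i$ I would expand $\epsilon \bar\phi(a) b_i = \sum_j c_{ij} b_j$ with $c_{ij} \in A_*$, producing a matrix $C$ depending on $a$ and $\epsilon$. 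The adjugate identity then forces $\det(\epsilon a \cdot I_n - C) \cdot b_j = 0$ in $B$ for every $j$, and since $\bar\phi$ is almost injective, $\det(\epsilon a \cdot I_n - C)$ itself is almost zero in $A$. Expanding gives a monic identity in $\epsilon a$ whose lower-order coefficients are built from the entries of $C$, and the constraint $a(n \otimes b_i) = 0$ (which, via the defining equation of the $c_{ij}$, forces the vectors $(c_{ij} n)_j$ into $\ker(N^n \to N_B)$ for each $i$ and each $n \in N_*$) should force these lower-order coefficients to act trivially on $N$ up to $\ideal{m}$. I would then extract $a^n \in \text{Ann}_A(N)$ in the almost sense, giving exponent $n$ for this factor.

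The hard part will be closing that last step: converting the pointwise annihilation $a(n \otimes b_i) = 0$ into a uniform constraint on $C$ strong enough to push the non-leading coefficients of $\det(\epsilon a \cdot I_n - C)$ into $\text{Ann}_A(N)$ rather than merely $\text{Ann}_A(N_B)$, while bookkeeping the auxiliary elements of $\ideal{m}$ introduced by the almost approximations. Once this is done, combining the two factors produces property (*) for $\phi$ with exponent $O(nk)$, where $n$ is the number of almost generators of $B$ over $A/I$ and $k$ is the nilpotence index of $\ker(\phi)$.
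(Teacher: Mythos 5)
Your decomposition of $\phi$ into $A \twoheadrightarrow A/I \stackrel{\bar\phi}{\hookrightarrow} B$ is a reasonable strategy, your proof that property (*) is stable under composition is correct (modulo stating it with products of ideal elements rather than a single $a^{m_1 m_2}$ — (*) is a statement about ideal powers, so one should take $a_1,\dots,a_{m_1 m_2}\in\text{Ann}_A(N_B)$ and run the same chain), and your treatment of the nilpotent surjection $A\twoheadrightarrow A/I$ is fine. The problem is that the genuinely hard factor — the finite monomorphism $\bar\phi$ — is left unresolved, and the Cayley–Hamilton sketch you give does not close. The determinantal trick gives a relation $\det(\epsilon a\cdot I_n - C)\cdot b_j = 0$, hence (using almost injectivity of $\bar\phi$ and $b_1=1$) an almost relation $a^n = \sum_k (\pm e_k)a^{n-k}$ in $A$ with $e_k$ the elementary symmetric functions of $C$. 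But the matrix $C$ is far from unique (the $b_i$ generate $B$ but are not free over $A/I$), and nothing in the hypothesis forces the entries $c_{ij}$, and hence the $e_k$, into $\text{Ann}_A(N)$. The constraint $a(n\otimes b_i)=0$ only says the vectors $(c_{ij}n)_j$ lie in $\ker(N^n\to N_B)$, which is uncontrolled. For some choices of $C$ (e.g.\ when $a$ already lies in the image of $A$, so one may take $C=aI$) the relation is vacuous; for other choices one may get lucky, but you have not given a mechanism for always making a good choice. This is exactly the step you flag as ``the hard part,'' and I do not believe it can be repaired along these lines without importing the idea below.

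The paper's argument avoids factoring $\phi$ entirely and uses instead the cited \cite[Lemma 3.2.21]{gabram}: since $B$ is (almost) finite over $A$ and $\text{Ann}_A(B)\subset\ker\phi$ is nilpotent, one finds a finitely generated $A_*$-submodule $Q$ with $\ideal{m}B_*\subset Q\subset B_*$ and a \emph{finite filtration} $0=J_0\subset\cdots\subset J_m=A_*$ whose graded pieces $J_{i+1}/J_i$ are quotients of direct sums of copies of $Q$. Since $Q^a\simeq B$, the ideal $\text{Ann}_A(N\otimes_A B)$ kills $N\otimes_A(J_{i+1}/J_i)^a$ for every $i$, and climbing the filtration gives $\text{Ann}_A(N_B)^m\subset\text{Ann}_A(N)$ in one stroke. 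That filtration of $A$ by quotients of copies of $B$ is the structural input your argument is missing: it turns the $B$-side annihilation into an $A$-side annihilation a layer at a time, which is precisely what the determinantal approach fails to do.
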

\begin{proof}
This is essentially a reworded version of \cite[Lemma 3.2.23]{gabram}. First we recall the lemma used there: \cite[Lemma 3.2.21]{gabram}
\begin{lem}
Let $R$ be a ring, $M$ a finitely generated $R$-module such that $\text{Ann}_R(M)$ is a nilpotent ideal. Then $R$ admits a filtration $0 = J_0 \subset ... \subset J_m = R$ such that each $J_{i+1}/J_i$ is a quotient of a direct sum of copies of $M$.
\end{lem}
We want to apply the lemma upon applying $(-)_*$ to the given situation, but it isn't necessarily true that $B_*$ is still a finitely generated $A_*$-module; nevertheless, we may find a finitely generated $A_*$-module $Q$ such that $\ideal{m} \cdot B_* \subset Q \subset B_*$ and that $Q$ satisfies the condition of the lemma (see Remark~\ref{rem:initrem} (i) and (iv)). We thus know that there is a filtation $0 = J_0 \subset ... \subset J_m = A_*$ such that each quotient $J_{i+1}/J_i$ is a quotient of a direct sum of copies of $Q$. So, as $Q \hookrightarrow B_*$ is an almost isomorphism:
\[\text{Ann}_{A}(M \otimes_A B) = \text{Ann}_{A}(M \otimes_A Q^a) \subset \text{Ann}_A(M \otimes_A (J_{i+1}/J_{i})^a)\]
for each $i=0,1,...,m-1$, and therefore:
\[\text{Ann}_{A}(M \otimes_A B)^m \subset \prod_{i=0}^{m-1}\text{Ann}_A(M \otimes_A (J_{i+1}/J_{i})^a)\subset \text{Ann}_A(M).\]
\end{proof}

\begin{cor}\label{cor:nildescent}
Assume in addition that $A \to B$ is an epimorphism and $M$ is flat, then $M_B$ being almost projective over $B$ implies that $M$ is almost projective over $A$.
\end{cor}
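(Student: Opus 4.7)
The plan is to show that $\alext{i}{A}{M}{N} \simeq 0$ for every $A$-module $N$ and every $i \ge 1$; by the discussion following Lemma~\ref{lem:deralmloc}, this characterises $M$ as almost $A$-projective. I will first verify this vanishing for $B$-modules using the flatness of $M$, and then propagate it to arbitrary $A$-modules via the finite filtration of $A_*$ produced inside the proof of Lemma~\ref{lem:nilpotent}.

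For the first reduction, I would take an almost projective resolution $P_\bullet \to M$ over $A$; flatness of $M$ makes $P_\bullet \otimes_A B \to M_B$ an almost projective resolution of $M_B$ over $B$, since each $P_i \otimes_A B$ is almost $B$-projective by base-change and exactness in positive degrees follows from $\tor{i}{A}{M}{B} \simeq 0$. The tensor--hom adjunction $\alhom{B}{- \otimes_A B}{N'} \simeq \alhom{A}{-}{N'}$ for any $B$-module $N'$ then yields
\[
\alext{i}{A}{M}{N'} \simeq \alext{i}{B}{M_B}{N'},
\]
and the right-hand side vanishes for $i \ge 1$ by almost $B$-projectivity of $M_B$.

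For the propagation, recall that the proof of Lemma~\ref{lem:nilpotent} constructs a finite filtration $0 = J_0 \subset \cdots \subset J_m = A_*$ by $A_*$-submodules whose graded pieces are quotients of direct sums of copies of a finitely generated $Q \subset B_*$ with $Q \hookrightarrow B_*$ an almost isomorphism. For an arbitrary $A$-module $N$, tensoring produces a finite filtration $\{J_i \cdot N\}$ of $N$ whose graded pieces are almost quotients of direct sums of copies of $N_B$; in particular, each is almost isomorphic to a $B$-module. Applying $\alext{\bullet}{A}{M}{-}$ to the short exact sequences
\[
0 \longrightarrow J_i N \longrightarrow J_{i+1} N \longrightarrow J_{i+1} N / J_i N \longrightarrow 0
\]
and invoking the long exact sequence together with the vanishing from the previous paragraph on each graded piece, an induction on $i$ gives $\alext{i}{A}{M}{N} \simeq 0$ for every $i \ge 1$.

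The hard part will be to carefully justify the ``almost $B$-module'' identification of the graded pieces; once set up correctly the propagation is automatic, because alExt depends only on the almost isomorphism class of its arguments (a consequence of Lemma~\ref{lem:deralmloc}). The epimorphism hypothesis can be exploited here to streamline matters: combined with finiteness, in analogy with the classical fact that finite epimorphisms of commutative rings are surjective, one expects $A/\ker(\phi) \simeq B$ as almost algebras, in which case the cleaner filtration $\{I^k N\}$ (where $I := \ker(\phi)$) has graded pieces that are honest $A/I$-modules and the induction is completely transparent.
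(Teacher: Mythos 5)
Your first step---establishing $\alext{i}{A}{M}{N'}\simeq\alext{i}{B}{M_B}{N'}$ for $B$-modules $N'$ by tensoring an almost projective resolution of $M$ up to $B$ (exact in positive degrees because $M$ is flat) and invoking the tensor--hom adjunction---is exactly the content of the paper's argument, which packages the same degeneration into the change-of-rings spectral sequence $\alext{p}{B}{\tor{q}{A}{M}{B}}{N'}\Rightarrow\alext{p+q}{A}{M}{N'}$. Where you diverge is the devissage to arbitrary $A$-modules $N$. The paper simply filters $N$ by powers of $I:=\ker(A\to B)$, which is nilpotent by the standing hypotheses of Lemma~\ref{lem:nilpotent}, and notes that the graded pieces are $B$-modules. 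This is legitimate because in this paper ``epimorphism'' means $\coker=0$ (see Remark~\ref{rem:initrem}(ii)), so $A/I\to B$ has almost zero kernel and cokernel and is an isomorphism of almost algebras; no appeal to ``finite epimorphisms of rings are surjective'' is needed, and the streamlined variant you sketch in your final paragraph is precisely the paper's proof.

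Your primary route through the filtration $0=J_0\subset\cdots\subset J_m=A_*$ borrowed from Lemma~\ref{lem:nilpotent} is more roundabout, and the step you yourself flag as the hard part is where the argument as written has a gap: being an (almost) quotient of a direct sum of copies of $N_B$ does not by itself make $J_{i+1}N/J_iN$ almost isomorphic to a $B$-module, since a quotient of a $B$-module by an arbitrary $A$-submodule need not inherit a $B$-structure when $A\to B$ is merely a categorical epimorphism (compare $\mathbb{Z}\to\mathbb{Z}[1/p]$ and the quotient $\mathbb{Z}[1/p]/\mathbb{Z}$). What actually rescues the identification is that $I_*=\ker(A_*\to B_*)$ kills $Q\subset B_*$, hence kills every graded piece of your filtration, and then the isomorphism $A/I\simeq B$ coming from the cokernel-zero reading of the epimorphism hypothesis identifies modules killed by $I$ with $B$-modules. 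But once this is in hand the $J_\bullet$ detour buys nothing: the $I$-adic filtration of $N$ already has graded pieces killed by $I$, the long-exact-sequence induction you describe goes through verbatim (it is the same mechanism as Lemma~\ref{lem:filtorext}), and this is exactly the paper's devissage.
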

\begin{proof}
We must show that $\alext{p}{A}{M}{N} =0$ for all $A$-modules $N$. Let $I$ be the kernel of $A \to B$; by the assumption, $I$ is nilpotent, so by usual devissage arguments one can can reduce to the case that $IN=0$ and therefore $N$ is a $B$-module. We then have a spectral sequence:
\[\alext{p}{B}{\tor{q}{A}{M}{B}}{N} \Rightarrow \alext{p+q}{A}{M}{N},\footnote{Here, $M$ is an $A$-module and $N$ is a $B$-module.}\]
which gives us the isomorphism $\alext{p}{B}{M \otimes_A B}{N} \simeq \alext{p}{A}{M}{N}$, so the result follows. 
\end{proof}
\begin{proof}[Proof of Lemma~\ref{thm:descent1}] See~\cite[Lemma 3.2.25]{gabram}.
\end{proof}
While Theorem~\ref{thm:descent1} provides a very general framework for when we can descend almost finite and almost finitely presented modules, we would like a set-up that descends almost projective modules. Consider the following set-up:
\[\begin{tikzcd}
A_0 \arrow[r, "f_2"] \arrow[d, "f_1"]& A_2 \arrow[d, "g_2"] \\
A_1 \arrow[r, "g_1"] & A_3
\end{tikzcd}\]
such that the square is cartesian and one of the morphisms $g_1, g_2$ is an epimorphism. Geometrically, one may view $\spec{A_0}$ as being formed by gluing $\spec{A_2}$ and $\spec{A_1}$ along a closed subscheme of one of the two; we call the diagram a \textit{gluing diagram}. Note that we have a corresponding essentially commutative diagram:
\begin{equation}\label{eq:descenteq}\begin{tikzcd}
A_0\text{-Mod} \arrow[r, "f_{2*}"] \arrow[d, "f_{1*}"]& A_2\text{-Mod} \arrow[d, "g_{2*}"] \\
A_1\text{-Mod} \arrow[r, "g_{1*}"] & A_3\text{-Mod}
\end{tikzcd}\end{equation}
where the subscripts denotes the extension of scalars functors. The theorem we want to prove is: \cite[Proposition 3.4.21]{gabram}
\begin{thm}\label{thm:descent2}
The above diagram is $2$-cartesian on the subcategory of almost projective modules and flat modules.
\end{thm}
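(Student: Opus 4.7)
The plan is to construct a quasi-inverse to the natural functor $F \colon A_0\text{-Mod} \to A_1\text{-Mod} \times_{A_3\text{-Mod}} A_2\text{-Mod}$ sending $M_0 \mapsto (f_{1*}M_0, f_{2*}M_0, \text{can})$, restricted to the subcategories of almost projective (respectively flat) modules. Without loss of generality suppose $g_1$ is an epimorphism; since the square is a gluing diagram, I would first extract a Mayer--Vietoris short exact sequence of $A_0$-modules
\[
0 \to A_0 \to A_1 \oplus A_2 \to A_3 \to 0,
\]
with second map $(a_1, a_2) \mapsto g_{1*}(a_1) - g_{2*}(a_2)$: surjectivity is the epimorphism hypothesis, and injectivity is the algebra pullback structure. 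The candidate quasi-inverse $G$ sends a triple $(M_1, M_2, \phi)$ to the pullback $M_0 := M_1 \times_{M_3} M_2$ computed in $A_0\text{-Mod}$ (with $M_3 := g_{1*}M_1 \simeq g_{2*}M_2$ via $\phi$), which fits in the analogous short exact sequence
\[
0 \to M_0 \to M_1 \oplus M_2 \to M_3 \to 0.
\]

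To verify $F \circ G \simeq \id$ on an object $(M_1, M_2, \phi)$, I would apply $A_1 \otimes_{A_0} -$ to the module Mayer--Vietoris sequence. Right exactness, combined with the identifications $A_1 \otimes_{A_0} A_2 \simeq A_3$ and $A_1 \otimes_{A_0} M_2 \simeq M_3$ that follow from the epimorphism hypothesis, reduces $f_{1*}M_0 \simeq M_1$ to the vanishing $\tor{1}{A_0}{A_1}{M_3} = 0$. This I would extract from the Tor base-change spectral sequence applied to the presentation $M_3 \simeq M_2 \otimes_{A_2} A_3$, using $A_2$-flatness of $M_2$ (which in the almost projective case is the corollary that almost projective implies flat). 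The identification $f_{2*}M_0 \simeq M_2$ is symmetric. For $G \circ F \simeq \id$ with $M_0$ flat over $A_0$, tensoring the algebra Mayer--Vietoris sequence with $M_0$ preserves exactness, yielding the natural isomorphism $M_0 \simeq (f_{1*}M_0) \times_{g_{1*}f_{1*}M_0} (f_{2*}M_0)$.

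The main obstacle is showing that the constructed $M_0$ is itself almost projective (resp. flat) over $A_0$. For flatness, I would invoke the general flatness criterion, Theorem~\ref{thm:genlocflat}, applied to the filtration of an arbitrary $A_0$-module $N$ by the ideal $I := \ker(f_2 \colon A_0 \to A_2)$ (so that each graded piece is naturally either an $A_1$- or $A_2$-module): the flatness of each $M_0 \otimes_{A_0} A_i$ over $A_i$ is already the content of Step 2, and the Tor vanishings $\tor{1}{A_0}{A_i}{M_0} = 0$ fall out of a long-exact-sequence chase applied to the module Mayer--Vietoris sequence together with the $A_j$-flatness of the $M_j$. For the almost projective case I would first promote $M_0$ to almost finitely presented by descent along the faithfully flat morphism $A_0 \to A_1 \oplus A_2$, invoking Theorem~\ref{thm:descent1}; verifying the descent condition (*) for this morphism is the delicate step, which I expect to reduce to the controlled structure of the $A_3$-quotient in the Mayer--Vietoris sequence, in analogy with Lemma~\ref{lem:nilpotent}. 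Combining flatness with almost finite presentation, Theorem~\ref{thm:presented}(ii) then upgrades $M_0$ to almost projective. An alternative more hands-on route uses Lemma~\ref{lem:almprojsplit}: glue the $\epsilon$-splittings for $M_1$ and $M_2$ into one for $M_0$, with $\phi$ ensuring the matching of their $A_3$-reductions; either way, the technical heart lies in controlling the interplay between the gluing data and the almost-splitting data, which is precisely where the epimorphism hypothesis becomes indispensable.
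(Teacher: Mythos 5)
Your treatment of the flat case is essentially the same as the paper's: identify the quasi-inverse as the pullback $T(M_1,M_2,\xi)=M_1\times_{M_3}M_2$, verify unit and counit via the Mayer--Vietoris sequence $0\to A_0\to A_1\oplus A_2\to A_3\to 0$ (this is Lemmas~\ref{lem:descprop1} and~\ref{lem:descprop2}), and then apply Theorem~\ref{thm:genlocflat} along the filtration coming from $I=\ker(A_0\to A_2)$ to show $T$ lands in $A_0\text{-Mod}_{\text{fl}}$. The Tor vanishings $\tor{1}{A_0}{M_0}{A_i}=0$ you wave at are the content of Lemma~\ref{lem:descreltor}, whose base case $j=1$ is not entirely routine but does go through by the LES chase you describe. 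So far so good.

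The almost projective case is where you have a genuine gap. Your main route is to prove $M_0$ is almost finitely presented by descending along $A_0\to A_1\oplus A_2$ using Theorem~\ref{thm:descent1}, and then to upgrade flat $+$ almost finitely presented to almost projective via Theorem~\ref{thm:presented}(ii). But the theorem is about the full subcategory of almost projective modules, with no finiteness hypothesis on $M_1$ and $M_2$. An almost projective module need not be almost finitely generated, so there is no reason $M_1$, $M_2$, or $M_0$ should be almost finitely presented, and Theorem~\ref{thm:descent1} gives you nothing. Your fallback route via Lemma~\ref{lem:almprojsplit} fails for the same reason: that splitting criterion is stated only for almost finitely \emph{generated} modules, so you cannot manufacture the $\epsilon$-splittings in the first place. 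What is actually needed is the direct argument in the paper: having already established $M_0$ is flat, the base-change spectral sequence $\alext{p}{A_i}{\tor{q}{A_0}{M_0}{A_i}}{N}\Rightarrow\alext{p+q}{A_0}{M_0}{N}$ collapses (flatness kills the $q>0$ rows) to give $\alext{1}{A_0}{M_0}{N}\simeq\alext{1}{A_i}{A_i\otimes_{A_0}M_0}{N}=0$ for every $A_i$-module $N$, and one then propagates this vanishing to all $A_0$-modules via the three-step filtration of Lemma~\ref{lem:flatdesc}(ii) and the d\'{e}vissage argument of Lemma~\ref{lem:filtorext}. That filtration step is the essential ingredient your proposal is missing.
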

We define the category of $\cal{D}$-modules as the $2$-fibre product $\cal{D}\text{-Mod} := A_{1}\text{-Mod} \times_{A_3\text{-Mod}} A_2\text{-Mod}$. In particular, the objects of this category are triples $(M_1,M_2, \xi)$ where $M_i$ is an $A_i$-module ($i=1,2$) and $\xi: A_3 \otimes_{A_1} M_1 \to A_{3} \otimes_{A_2} M_2$ is an $A_3$-linear isomorphism. We have a natural functor:
\[\pi: A_0\text{-Mod} \to \cal{D}\text{-Mod}\]
which takes a module $M_0 \in A_0\text{-Mod}$ and takes it to the triple $(A_1 \otimes_{A_0} M_0, A_2 \otimes_{A_0} M_0, \xi_{M_0})$ where $\xi_{M_0}: A_3 \otimes_{A_1}(A_1 \otimes_{A_0} M_0) \to A_3 \otimes_{A_2}(A_2 \otimes_{A_0} M_0)$ is a natural isomorphism arising from the commutativity $g_{1} \circ f_{1} = g_2 \circ f_2$. It's clear what this functor thus does on morphisms. We set $\cal{D}\text{-Mod}_{\text{fl}} := A_{1}\text{-Mod}_{\text{fl}} \times_{A_3\text{-Mod}_{\text{fl}}} A_2\text{-Mod}_{\text{fl}}$ and $\cal{D}\text{-Mod}_{\text{apr}} := A_{1}\text{-Mod}_{\text{apr}} \times_{A_3\text{-Mod}_{\text{apr}}} A_2\text{-Mod}_{\text{apr}}$ for the fibered product of the sub-categories of flat and almost projective modules, respectively.\\
\indent We can also construct a functor going in the other way. Indeed, given an object $(M_1, M_2, \xi)$ in $\cal{D}\text{-Mod}$, set $M_3 := A_3 \otimes_{A_2} M_2$; we have natural morphisms $M_2 \to M_3$ and $M_1 \to A_3 \otimes_{A_1} M_1 \stackrel{\xi}{\to} M_3$, so we can form the fibered product $T(M_1, M_2, \xi) := M_1 \times_{M_3} M_2$. One thus obtains a functor $T: \cal{D}\text{-Mod} \to A_0\text{-Mod}$ that is right adjoint to $\pi$, which follows from the universal property of fibered products and the natural isomorphism $\hom{A}{M}{N} \simeq \hom{B}{B \otimes_A M}{N}$ where $A \to B$ is a morphism of (almost) algebras, $M$ is an $A$-module and $N$ is a $B$-module. We set $\varepsilon: 1_{A_0\text{-Mod}} \to T \circ \pi$ and $\eta: \pi \circ T \to 1_{\cal{D}\text{-Mod}}$ to be the unit and counit of the adjunction, respectively. The next lemma is a general property of adjunctions.
\begin{lem}\label{lem:adj}
The functor $\pi$ induces an equivalence of full subcategories:
\[\{X \in \text{Ob}(A_{0}\text{-Mod}) | \varepsilon_X \text{ is an isomorphism}\} \to \{Y \in \text{Ob}(\cal{D}\text{-Mod}) | \eta_Y \text{ is an isomorphism}\}.\]
\end{lem}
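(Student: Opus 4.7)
The plan is to recognise that this lemma is a purely formal consequence of the adjunction $\pi \dashv T$, using nothing about the gluing diagram itself. The key inputs are the trivial fact that functors preserve isomorphisms, together with the triangle identities
\[\eta_{\pi X} \circ \pi(\varepsilon_X) = 1_{\pi X}, \qquad T(\eta_Y) \circ \varepsilon_{T Y} = 1_{T Y},\]
which hold for any adjoint pair with unit $\varepsilon$ and counit $\eta$.

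The first step is to show that $\pi$ restricts to a functor from the first subcategory to the second. If $\varepsilon_X$ is an isomorphism, then so is $\pi(\varepsilon_X)$, and the first triangle identity then forces $\eta_{\pi X} = \pi(\varepsilon_X)^{-1}$, which is an isomorphism; hence $\pi(X)$ lies in the second subcategory. The symmetric argument using the second triangle identity shows that $T$ restricts to a functor from the second subcategory back to the first: if $\eta_Y$ is an isomorphism, then $\varepsilon_{TY} = T(\eta_Y)^{-1}$ is too.

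The second step is to verify that these restrictions are quasi-inverse. On the restricted subcategories, $\varepsilon$ and $\eta$ are, by the very definitions of the subcategories, natural isomorphisms. By the standard characterisation of adjoint equivalences, this exhibits the restrictions of $\pi$ and $T$ as an equivalence of categories, giving the claim.

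There is no substantive mathematical obstacle; the argument is pure category theory and makes no use of the structure of diagram \eqref{eq:descenteq}. The only care required is bookkeeping: tracking the directions of the unit and counit (the excerpt adopts the slightly nonstandard convention $\varepsilon$ for the unit and $\eta$ for the counit) and applying each triangle identity on the correct side so that the inverses obtained land where they should.
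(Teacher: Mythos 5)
Your proof is correct: the triangle identities for the adjunction $\pi \dashv T$ (with unit $\varepsilon$ and counit $\eta$ as in the paper) show each functor restricts to the indicated subcategory, and the restricted unit and counit are then natural isomorphisms by definition, giving the equivalence. This is exactly the content the paper intends, since it states the lemma without proof as "a general property of adjunctions," and your argument is the standard one.
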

\begin{lem}\label{lem:descprop1}
Let $M$ be any $A_0$-module. Then, $\varepsilon_M$ is an epimorphism with kernel $\im(\tor{1}{A_0}{M}{A_3} \to M)$. Thus if $\tor{1}{A_0}{M}{A_3}=0$, then $\varepsilon_M$ is an isomorphism.
\end{lem}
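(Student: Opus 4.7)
The strategy is to realize $\varepsilon_M$ as part of the Tor long exact sequence attached to a Mayer--Vietoris-style presentation of $A_3$ over $A_0$. First, unfolding the definitions, $T(\pi(M))$ is nothing but the kernel of the difference map
$$d_M: (A_1 \otimes_{A_0} M) \oplus (A_2 \otimes_{A_0} M) \longrightarrow A_3 \otimes_{A_0} M$$
induced by $g_1$ and $g_2$, and $\varepsilon_M$ is the diagonal $m \mapsto (1 \otimes m, 1 \otimes m)$, which lands in $\ker d_M$ by the commutativity of the gluing square.

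The key ingredient is the short exact sequence of $A_0$-modules
$$0 \longrightarrow A_0 \xrightarrow{(f_1,\, f_2)} A_1 \oplus A_2 \xrightarrow{d} A_3 \longrightarrow 0 \quad (\ast)$$
where $d(a_1, a_2) := g_1(a_1) - g_2(a_2)$. Surjectivity of $d$ is cheap: one of $g_1, g_2$ is an epimorphism and hence surjective as an $A_0$-module map, so $d$ already hits every element of $A_3$ through a single coordinate. The identification $\ker d = \im(f_1, f_2) \simeq A_0$ expresses the gluing square as being both a pushout and a pullback, and this is the substantive content of the "gluing datum" hypothesis---this step is the main obstacle, but is implicit in the setup of Section~2 (when working in the almost category, it suffices to have $(\ast)$ up to almost isomorphism, which is harmless for our purposes).

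Granting $(\ast)$, apply $-\otimes_{A_0} M$ and extract the Tor long exact sequence
$$\tor{1}{A_0}{A_3}{M} \xrightarrow{\partial} M \xrightarrow{\widetilde\varepsilon_M} (A_1 \otimes_{A_0} M) \oplus (A_2 \otimes_{A_0} M) \xrightarrow{d_M} A_3 \otimes_{A_0} M \longrightarrow 0,$$
where $\widetilde\varepsilon_M = (1 \otimes, 1 \otimes)$ factors $\varepsilon_M$ through the embedding $T(\pi(M)) \hookrightarrow (A_1 \otimes_{A_0} M) \oplus (A_2 \otimes_{A_0} M)$. Exactness at the third term gives $\im \widetilde\varepsilon_M = \ker d_M = T(\pi(M))$, so $\varepsilon_M: M \to T(\pi(M))$ is surjective, i.e.\ an epimorphism. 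Exactness at $M$ gives $\ker \widetilde\varepsilon_M = \im \partial$, and this coincides with $\ker \varepsilon_M$ since the injection $T(\pi(M)) \hookrightarrow (A_1 \otimes_{A_0} M) \oplus (A_2 \otimes_{A_0} M)$ is a monomorphism. The final assertion that $\varepsilon_M$ is an isomorphism whenever $\tor{1}{A_0}{A_3}{M} = 0$ is then immediate from $\partial = 0$.
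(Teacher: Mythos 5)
Your argument is correct, and it is essentially the argument behind the citation the paper gives (Gabber--Ramero, Lemma 3.4.10): identify $T(\pi(M))$ with the kernel of the difference map $(A_1\otimes_{A_0}M)\oplus(A_2\otimes_{A_0}M)\to A_3\otimes_{A_0}M$, tensor the sequence $0\to A_0\to A_1\oplus A_2\to A_3\to 0$ with $M$, and read off surjectivity of $\varepsilon_M$ and the identification of $\ker\varepsilon_M$ with the image of the connecting map $\tor{1}{A_0}{M}{A_3}\to M$. The one point you flag as "the main obstacle" deserves a cleaner treatment: exactness of $0\to A_0\to A_1\oplus A_2\to A_3\to 0$ at $A_0$ and at the middle term is not something to be derived or handled "up to almost isomorphism"; it is precisely the statement that the gluing square is cartesian, i.e.\ $A_0\simeq A_1\times_{A_3}A_2$, which is a standing hypothesis of the descent setup (explicit in Gabber--Ramero, and used verbatim by the paper itself when it applies $M\otimes_{A_0}-$ to this very sequence in the proof of Lemma~\ref{lem:descreltor}). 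So you should simply state it as part of the hypotheses rather than hedge; with that, surjectivity onto $A_3$ follows from one of $g_1,g_2$ being an epimorphism (the element-by-element phrasing should be read categorically, or via almost elements as in Remark~\ref{rem:initrem}(ii)), and the rest of your Tor long exact sequence argument, including the reduction of $\ker\varepsilon_M$ to $\ker\widetilde\varepsilon_M$ via the monomorphism $T(\pi(M))\hookrightarrow(A_1\otimes_{A_0}M)\oplus(A_2\otimes_{A_0}M)$, goes through unchanged in the abelian category of almost modules.
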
	
\begin{proof}
See \cite[Lemma 3.4.10]{gabram}.
\end{proof}
\begin{lem}\label{lem:descprop2}
$\eta_{(M_1,M_2,\xi)}$ is an isomorphism for all objects $(M_1,M_2, \xi)$ in $\cal{D}$-Mod.
\end{lem}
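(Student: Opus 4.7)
The plan is to verify that both components
\[\alpha : A_1 \otimes_{A_0} N \to M_1, \quad \beta : A_2 \otimes_{A_0} N \to M_2\]
of the counit $\eta_{(M_1, M_2, \xi)}$ are isomorphisms, where $N := T(M_1, M_2, \xi) = M_1 \times_{M_3} M_2$. By symmetry I may assume that $g_1: A_1 \to A_3$ is an epimorphism, so that $A_3 \otimes_{A_1} A_3 \simeq A_3$; the case when $g_2$ is an epi is handled identically.

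The starting point is the natural complex of $A_0$-modules
\[0 \to N \to M_1 \oplus M_2 \stackrel{\delta}{\to} M_3,\]
with $\delta(m_1, m_2) = \xi(1 \otimes m_1) - 1 \otimes m_2$, which is exact at $N$ and at $M_1 \oplus M_2$ by definition of the fibered product. The first key step is to show that the epimorphism hypothesis forces $\delta$ to be surjective, yielding a short exact sequence. The idea is to use the identification $M_3 \simeq A_3 \otimes_{A_1} M_1$ induced by $\xi^{-1}$ together with the pullback description $A_0 = A_1 \times_{A_3} A_2$ implicit in the gluing setup, and exploit $A_3 \otimes_{A_1} A_3 \simeq A_3$ to rewrite an arbitrary element of $M_3$ as a difference of elements in $\xi(1 \otimes M_1)$ and $1 \otimes M_2$.

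With this short exact sequence in hand, I would next establish the auxiliary fact $A_i \otimes_{A_0} A_3 \simeq A_3$ for $i = 1, 2$, again using the pullback structure and the epimorphism hypothesis. This gives canonical identifications $A_i \otimes_{A_0} M_3 \simeq M_3$. Tensoring the short exact sequence with $A_2$ over $A_0$ and chasing the resulting long exact $\text{Tor}$-sequence against the natural projections from $A_2 \otimes_{A_0}(M_1 \oplus M_2)$ onto $M_2$, I can then verify that $\beta$ is both injective and surjective; the analogous argument with $A_1 \otimes_{A_0} -$ then handles $\alpha$, and this yields the required isomorphism.

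The main obstacle I anticipate is establishing the surjectivity of $\delta$ together with the identification $A_i \otimes_{A_0} A_3 \simeq A_3$, since epimorphisms of (almost) algebras need not be set-theoretic surjections in general, and the argument must instead leverage the universal property of $g_1$ being an epi and the fibered-product description of $A_0$. Once these two structural inputs are secured, the remainder of the argument reduces to routine diagram chases using the snake lemma and the long exact sequence of $\text{Tor}$.
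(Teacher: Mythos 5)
Your strategy is on the right track for one component of $\eta$ but breaks down on the other, and this asymmetry is the crux of the lemma. First, a small correction: the hypothesis that one of $g_1,g_2$ is an epimorphism is a statement in the abelian category of (almost) modules, i.e.\ one of them has zero cokernel; there is no need to worry about ring-theoretic epimorphisms that fail to be surjective. With $g_1$ surjective, the map $M_1 \to A_3\otimes_{A_1}M_1 \xrightarrow{\xi} M_3$ is already surjective by right-exactness, so $\delta$ is surjective for free and your ``first key step'' is immediate rather than an obstacle. The genuine difficulties lie elsewhere.

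Once you fix $g_1$ to be surjective, your Tor-chase can be made to work for $\beta\colon A_2\otimes_{A_0}N \to M_2$, because $f_2\colon A_0 \to A_2$ is then surjective with kernel $I=\ker(g_1)$, so $A_2\otimes_{A_0}(-)=(-)/I(-)$ and $A_2\otimes_{A_0}M_2\simeq M_2$. Even so, the injectivity of $\beta$ is not a routine diagram chase: one has to show $I N=IM_1\times 0$, which amounts to $I\cdot\mathrm{pr}_1(N)=I M_1$ where $\mathrm{pr}_1(N)\subset M_1$ is the image of $N$; this uses that $A_1\cdot\mathrm{pr}_1(N)=M_1$ (since $g_1$ and $M_2\to M_3$ together generate $M_3$ over $A_3$) together with the fact that $I$ is an \emph{$A_1$}-ideal, so $I\cdot M_1 = I A_1\cdot \mathrm{pr}_1(N) = I\cdot\mathrm{pr}_1(N)$. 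But the ``analogous argument with $A_1\otimes_{A_0}(-)$'' for $\alpha$ does not exist: $f_1\colon A_0\to A_1$ is not surjective (typically not even a ring epimorphism, e.g.\ $k[x^2,x^3]\hookrightarrow k[x]$), so there is no natural projection $A_1\otimes_{A_0}(M_1\oplus M_2)\twoheadrightarrow M_1$, and $A_1\otimes_{A_0}M_1\not\simeq M_1$ in general. Tensoring the short exact sequence with $A_1$ only delivers a surjection $A_1\otimes_{A_0}N \to A_1\otimes_{A_0}M_1$ (modulo a Tor term), which is two steps removed from the desired isomorphism $A_1\otimes_{A_0}N\to M_1$. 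The proof that $\alpha$ is an isomorphism requires a separate argument---for instance, first establishing surjectivity via $A_1\cdot\mathrm{pr}_1(N)=M_1$ as above, then deducing injectivity by comparing with the already-proved isomorphism $\beta$ using the compatibility datum $\xi_N$---rather than a symmetric repetition of the $\beta$-argument.
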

\begin{proof}
See \cite[Lemma 3.4.14]{gabram}.
\end{proof}
Our first goal is to prove Theorem~\ref{thm:descent2} in the case of flat modules. We discuss the next series of lemmas in preparation for this.
\begin{rem}\label{rem:flatdescover}
For us, Lemmas~\ref{lem:descprop1} and \ref{lem:descprop2} will be very important; roughly speaking, they say that we can identify a flat module $M$ over $A_0$ via its `parts', and vice versa. Combining this with Lemma~\ref{lem:adj}, we reduce proving Theorem~\ref{thm:descent2}, in the case of flat modules, to showing that $T$ restricts to a functor $\cal{D}\text{-Mod}_{\text{fl}} \to A_0\text{-Mod}_{\text{fl}}$; likewise for almost projective modules. 
\end{rem}
\begin{lem}\label{lem:descreltor}
Let $M$ be any $A_0$-module and $n\ge 1$ an integer. The following conditions are equivalent:
\begin{enumerate}[(i)]
\item $\tor{j}{A_0}{M}{A_i} = 0$ for every $j \le n$ and $i=1,2,3.$
\item $\tor{j}{A_i}{A_{i} \otimes_{A_0} M}{A_3}=0$ for every $j \le n$ and $i=1,2$.
\end{enumerate}
\end{lem}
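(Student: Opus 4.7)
The plan is to combine the Cartan--Eilenberg base-change spectral sequences
\[E^2_{p,q}(i) = \tor{p}{A_i}{\tor{q}{A_0}{M}{A_i}}{A_3} \Longrightarrow \tor{p+q}{A_0}{M}{A_3} \qquad (i = 1, 2)\]
attached to the factorisations $A_0 \to A_i \to A_3$, with the Mayer--Vietoris Tor long exact sequence coming from the short exact sequence of $A_0$-modules
\[0 \longrightarrow A_0 \longrightarrow A_1 \oplus A_2 \longrightarrow A_3 \longrightarrow 0,\]
which is available because $A_0 = A_1 \times_{A_3} A_2$ and, say, $g_2$ is an epimorphism. Write $T_i := \tor{n}{A_0}{M}{A_i}$ throughout.

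For (i) $\Rightarrow$ (ii), hypothesis (i) forces $E^2_{p, q}(i) = 0$ for $1 \le q \le n$ and forces the abutment to vanish in total degrees $1, \ldots, n$. The differentials entering and leaving $E^r_{p, 0}(i)$ for $1 \le p \le n$ all have source or target in this vanishing strip, so $E^{\infty}_{p, 0}(i) = E^2_{p, 0}(i) = \tor{p}{A_i}{M \otimes_{A_0} A_i}{A_3}$; this is a subquotient of $\tor{p}{A_0}{M}{A_3} = 0$ and hence zero, giving (ii).

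The reverse direction proceeds by induction on $n$. The inductive step assumes the equivalence at $n - 1$; then hypothesis (ii) at $n$ a fortiori gives (ii) at $n - 1$ and, by the inductive hypothesis, (i) at $n - 1$, reducing the task to $T_1 = T_2 = T_3 = 0$. Inspecting each $E(i)$, the inductive vanishing combined with (ii) collapses every $E^{\infty}_{p, q}(i)$ of total degree $n$ except $E^{\infty}_{0, n}(i)$, producing an $A_3$-linear edge surjection $A_3 \otimes_{A_i} T_i \twoheadrightarrow T_3$. The epimorphism hypothesis identifies $A_3 \otimes_{A_2} T_2$ with $T_2 / (\ker g_2) T_2$, thereby upgrading the $i = 2$ surjection to a surjection $T_2 \twoheadrightarrow T_3$ induced directly by $g_2$. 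For $n \ge 2$, Mayer--Vietoris yields an isomorphism $T_1 \oplus T_2 \cong T_3$ under which the map from $T_2$ is the inclusion of the second summand, so surjectivity forces $T_1 = 0$; feeding $T_1 = 0$ back into the $i = 1$ edge surjection yields $T_3 = 0$, and Mayer--Vietoris finishes with $T_2 = 0$. The base case $n = 1$ is handled analogously after replacing the Mayer--Vietoris isomorphism with the exact sequence $0 \to T_1 \oplus T_2 \to T_3 \to \ker(M \to A_1 \otimes_{A_0} M \oplus A_2 \otimes_{A_0} M) \to 0$, the injectivity of $T_1 \oplus T_2 \hookrightarrow T_3$ sufficing to carry the same argument through. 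The principal difficulty is exactly this coordination: the spectral sequence alone supplies only quotient data, and it is the epimorphism hypothesis on $g_2$, filtered through Mayer--Vietoris, that converts this into genuine vanishing.
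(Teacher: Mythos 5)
Your proof is correct, and its skeleton coincides with the paper's: the forward direction via the two base-change spectral sequences $\tor{p}{A_i}{\tor{q}{A_0}{M}{A_i}}{A_3}\Rightarrow\tor{p+q}{A_0}{M}{A_3}$, and for the converse the same two ingredients, namely the edge surjections $\tor{n}{A_0}{M}{A_i}\otimes_{A_i}A_3\twoheadrightarrow\tor{n}{A_0}{M}{A_3}$ and the Tor long exact sequence of $0\to A_0\to A_1\oplus A_2\to A_3\to 0$, with the epimorphism hypothesis turning the edge information into a genuine surjection from one of the summands (you take $g_2$ surjective where the paper takes $g_1$; this is immaterial by symmetry). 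The one real divergence is how the higher degrees are handled. The paper runs this mechanism only in degree $1$ and then dimension-shifts: it chooses a flat presentation $0\to R\to F\to M\to 0$, uses the vanishing of $\tor{1}{A_0}{M}{A_i}$ to see that the presentation stays exact and flat after base change, and applies the statement at level $n-1$ to the syzygy $R$. You instead induct on $n$ for the fixed module $M$ and re-run the edge-plus-Mayer--Vietoris argument in degree $n$ itself, where the inductively known vanishing in degrees $1,\dots,n-1$ upgrades the Mayer--Vietoris sequence to an isomorphism $\tor{n}{A_0}{M}{A_1}\oplus\tor{n}{A_0}{M}{A_2}\simeq\tor{n}{A_0}{M}{A_3}$ for $n\ge 2$, injectivity alone sufficing when $n=1$ exactly as in the paper's base case. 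Your variant buys a slightly more self-contained induction (no flat presentation, hence no appeal to the existence of enough flat objects in $A_0$-Mod, although the paper has already established that), at the cost of repeating the degree-one mechanism in every degree, whereas the paper's syzygy step is pure formalism once the base case is done. Both arguments are complete.
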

\begin{proof}
See~\cite[Lemma 3.4.15]{gabram}.
\end{proof}
The next lemma, \cite[Lemma 3.4.18]{gabram}, is crucial, and we reproduce the proof for the convenience of the reader.
\begin{lem}\label{lem:flatdesc}
Let $M$ be any $A_0$-module. We have:
\begin{enumerate}[(i)]
\item The map $A_0 \to A_1 \times A_2$ fulfils condition (*) i.e. $\text{Ann}_{A_0}( M \otimes_{A_0} A_1) \cdot \text{Ann}_{A_0}(M \otimes_{A_0} A_2) \subset \text{Ann}_{A_0}(M)$.
\item $M$ admits a three-step filtration $0 \subset F_0(M) \subset F_1(M) \subset F_2(M) = M$ such that $F_0(M)$ and $\text{gr}_2(M)$ are $A_2$-modules and $\text{gr}_1(M)$ is an $A_1$-module.
\item If $(A_1 \times A_2) \otimes_{A_0} M$ is flat over $A_1 \times A_2$, then $M$ is flat over $A_0$.
\end{enumerate}
\end{lem}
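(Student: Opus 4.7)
Throughout, assume $g_1: A_1 \to A_3$ is the epimorphism and set $I := \ker(A_0 \to A_2)$, so that $A_2 = A_0/I$, $A_3 = A_1/f_1(I)A_1$, and $M \otimes_{A_0} A_2 = M/IM$. The Mayer--Vietoris sequence $0 \to A_0 \to A_1 \oplus A_2 \to A_3 \to 0$ is exact, as used implicitly in the proof of Lemma~\ref{lem:descreltor}. I would prove (ii) first, since (iii) then falls out of the generalised local flatness criterion and (i) from the filtration's standard annihilator chain.

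\emph{Part (ii).} Define $F_2 := M$, $F_1 := IM$, and $F_0 := \ker\!\bigl(M \to (M \otimes_{A_0} A_1) \oplus (M \otimes_{A_0} A_2)\bigr)$. The quotient $F_2/F_1 = M/IM$ is visibly an $A_2$-module, and $F_0 \subseteq F_1$ because $F_0 \subseteq \ker(M \to M/IM) = IM$. Tensoring Mayer--Vietoris with $M$ identifies $F_0$ as the image of $\tor{1}{A_0}{M}{A_3} \to M$; since $\tor{1}{A_0}{M}{A_3}$ carries a canonical $A_3$-module structure through which the $A_0$-action factors, and since $I \subseteq \ker(A_0 \to A_3)$, we have $I \cdot F_0 = 0$, so $F_0$ is an $A_2$-module. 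For $F_1/F_0 = IM/(IM \cap F_0)$, the natural map $IM \to M \otimes_{A_0} A_1$ has kernel $IM \cap F_0$ on $IM$, so $F_1/F_0$ embeds into $M \otimes A_1$; this embedding identifies it with the image $f_1(I) A_1 \cdot (M \otimes A_1)$, an $A_1$-submodule, endowing $F_1/F_0$ with an $A_1$-module structure.

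\emph{Part (iii).} Apply Theorem~\ref{thm:genlocflat} to the filtration from (ii) with $A_1, A_2$ as the distinguished algebras. The criterion reduces $A_0$-flatness of $M$ to (a) $A_i$-flatness of $M \otimes_{A_0} A_i$ for $i = 1, 2$---exactly the hypothesis that $(A_1 \times A_2) \otimes_{A_0} M$ is $A_1 \times A_2$-flat---and (b) the vanishing $\tor{1}{A_0}{A_i}{M} = 0$, which by Lemma~\ref{lem:descreltor} is equivalent to $\tor{1}{A_i}{M \otimes_{A_0} A_i}{A_3} = 0$, itself immediate from (a).

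\emph{Part (i) and main obstacle.} For $x \in \text{Ann}_{A_0}(M \otimes A_1)$ and $y \in \text{Ann}_{A_0}(M \otimes A_2)$, since $yM \subseteq IM$ we get $xyM \subseteq I \cdot xM$, and each $ixm$ (with $i \in I$, $m \in M$) lies in $F_0$: it is in $IM$ (so maps to $0$ in $M \otimes A_2$) and its image in $M \otimes A_1$ equals $f_1(i)(xm \otimes 1) = 0$. Combining the standard chain $\text{Ann}_{A_0}(F_0) \cdot \text{Ann}_{A_0}(F_1/F_0) \cdot \text{Ann}_{A_0}(F_2/F_1) \subseteq \text{Ann}_{A_0}(M)$ with the containments $I \subseteq \text{Ann}(F_0)$, $\text{Ann}(M \otimes A_1) \subseteq \text{Ann}(F_1/F_0)$, and $\text{Ann}(M \otimes A_2) = \text{Ann}(F_2/F_1)$ then yields the desired product inclusion. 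The principal technical difficulty is in (ii): verifying $I \cdot F_0 = 0$ rests on the standard but nontrivial fact that the $A_0$-action on $\tor{1}{A_0}{M}{A_3}$ factors along $A_0 \to A_3$ through its natural $A_3$-action, and confirming the $A_1$-structure on $F_1/F_0$ requires checking that the image of $IM$ in $M \otimes A_1$ is genuinely $A_1$-stable, a delicate point that requires the specific form $f_1(I) A_1 \cdot (M \otimes A_1)$.
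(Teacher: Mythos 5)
Your arguments for parts (ii) and (iii) match the paper's (the same filtration coming from $\varepsilon_M$, then Theorem~\ref{thm:genlocflat} via Lemma~\ref{lem:descreltor}), and you helpfully spell out the $A_1$-module structure on $F_1/F_0$, which the paper dismisses as clear. That part of your write-up is correct.

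Part (i) has a genuine gap. You establish that for $x \in \text{Ann}_{A_0}(M\otimes A_1)$ and $i \in I$, the element $ixm$ maps to $0$ in both $M\otimes A_1$ and $M\otimes A_2$, hence lies in $F_0$ --- but $F_0$ need not be zero, so this does not give $xyM = 0$. Your fallback via the annihilator chain then produces $I \cdot \text{Ann}(M\otimes A_1)\cdot\text{Ann}(M\otimes A_2) \subseteq \text{Ann}_{A_0}(M)$, which carries an extra factor of $I$ and is strictly weaker than the stated product inclusion (there is no reason for $I$ to contain either of the other two annihilators). The paper instead proves that $\text{Ann}_{A_0}(M\otimes A_1)$ annihilates the submodule $IM$ of $M$ itself, not merely its image in $M\otimes A_1$: since $I$ is an $A_1$-module, there is a surjection $I\otimes_{A_1}(A_1\otimes_{A_0} M) \simeq I\otimes_{A_0} M \twoheadrightarrow IM$, and any $x$ killing $A_1\otimes_{A_0} M$ kills the source and hence kills $IM$. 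Combined with $\text{Ann}_{A_0}(M\otimes A_2)\cdot M \subseteq IM$, this gives the product inclusion on the nose. You should replace your argument for (i) with this tensor-factorization step; the filtration-and-chain route cannot recover the clean bound.
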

\begin{proof}Fix $I$ to be the common kernel of $A_1 \to A_3$ and $A_0 \to A_2$.
\begin{enumerate}[(i)]
\item We note first that $\text{Ann}_{A_0}(M \otimes_{A_0} A_2) \cdot M \subset IM$. Next, have the composition $I \otimes_{A_1} (A_{1} \otimes_{A_0} M) \to I \otimes_{A_0} M \to IM$ is an epimorphism, thus $\text{Ann}_{A_0}( M \otimes_{A_0} A_1) \cdot IM = 0$. Hence, $\text{Ann}_{A_0}( M \otimes_{A_0} A_1) \cdot \text{Ann}_{A_0}(M \otimes_{A_0} A_2) \subset \text{Ann}_{A_0}(M)$, so $\text{Ann}_{A_0}( M \otimes_{A_0} (A_1 \times A_2))^2 \subset \text{Ann}_{A_0}(M)$.
\item Set $F_0(M) := \ker(\varepsilon_M)$. By Lemma~\ref{lem:descprop1}, we derive that $F_0(M) \simeq \im(\tor{1}{A_0}{M}{A_3} \to M)$, which is killed by $I$ and therefore is an $A_0/I \simeq A_2$-module. We then set $F_1(M) := \varepsilon_M^{-1}(M \otimes_{A_0} A_1)$ and it is clear that this results in a valid filtration.
\item We are given that $M \otimes_{A_0} A_i$ is flat over $A_i$ for $i=1,2$, and too, by Lemma~\ref{lem:descreltor}, that $\tor{1}{A_0}{M}{A_i} = 0$ for $i=1,2,3$. The result now follows from Theorem~\ref{thm:genlocflat}.
\end{enumerate}
\end{proof}
\begin{proof}[Proof of Theorem~\ref{thm:descent2}]
In the case of flat modules, combine Remark~\ref{rem:flatdescover} and Lemma~\ref{lem:flatdesc}. Again, by Remark~\ref{rem:flatdescover}, we reduce to proving that if $A_i \otimes_{A_0} M$ is almost projective over $A_i$ for $i=1,2$, then $M$ is too over $A_0$. We already know that $M$ is flat over $A_0$, so for any $A_i$-module $N$, we have $\alext{1}{A_i}{A_i \otimes_{A_0} M}{N} = \alext{1}{A_0}{M}{N}$, as one can check by using the spectral sequence:
\[\alext{p}{B}{\tor{q}{A}{M}{B}}{N} \Rightarrow \alext{p+q}{A}{M}{N}.\]
This extends to all $A_0$-modules $N$ via Lemma~\ref{lem:filtorext}, so we're done.
\end{proof}
\begin{cor}\label{cor:descent3}
In the situation of (\ref{eq:descenteq}), the diagram is $2$-cartesian upon restricting to the sub-categories $\text{Alg}_{\text{fl}}$, $\text{\'{E}t}$ (i.e. \'{e}tale algebras) $\text{w.\'{E}t}$ (i.e. weakly \'{e}tale algebas), $\text{Alg}_{\text{afgfl}}$ (i.e. almost finitely generated flat algebras), $\text{Alg}_{\text{afpfl}}$ (i.e. almost finitely presented flat algebras), $\text{\'{E}t}_{\text{afp}}$ (i.e. almost finitely presented \'{e}tale algebras\textemdash these we will call the finite \'{e}tale algebras in the almost category).
\end{cor}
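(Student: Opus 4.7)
The plan is to reduce each listed subcategory to one of the two established module-level descent theorems, namely Theorems~\ref{thm:descent1} and~\ref{thm:descent2}, while tracking how the multiplicative structure survives each step.

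First, I would lift the adjunction $\pi \dashv T$ from modules to algebras. Given an algebra gluing datum $(B_1, B_2, \xi)$ in which each $B_i$ is an $A_i$-algebra and $\xi$ is an $A_3$-algebra isomorphism, the underlying fibered product $T(B_1,B_2,\xi) = B_1 \times_{B_3} B_2$ inherits a canonical almost algebra structure from componentwise multiplication, and the projection maps to $B_1, B_2$ are algebra maps. The unit $\varepsilon_{B_0}$ and counit $\eta_{(B_1,B_2,\xi)}$, whenever they are isomorphisms at the module level, automatically respect the multiplication. This immediately upgrades Theorem~\ref{thm:descent2} to the statement for $\mathrm{Alg}_{\mathrm{fl}}$.

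For the weakly \'{e}tale and \'{e}tale cases the strategy is to iterate Theorem~\ref{thm:descent2}. Given a gluing datum of weakly \'{e}tale algebras $B_i/A_i$, the flat descent just established produces a flat $A_0$-algebra $B_0$. Setting $C_0 := B_0 \otimes_{A_0} B_0$ and $C_i := C_0 \otimes_{A_0} A_i$, flatness of $B_0$ over $A_0$ lets me identify $C_i \simeq B_i \otimes_{A_i} B_i$, and the diagram $C_0 \to C_1, C_2 \to C_3$ is again a gluing diagram because cobase change preserves epimorphisms of almost algebras. The $C_0$-module $B_0$ (via $\mu_{B_0/A_0}$) pulls back along $C_0 \to C_i$ to $B_i$ as a $C_i$-module (via $\mu_{B_i/A_i}$), and this $C_i$-module is flat by the weak \'{e}taleness hypothesis. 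Re-applying Theorem~\ref{thm:descent2} in this new setting yields that $B_0$ is flat over $C_0$, that is, weakly \'{e}tale over $A_0$. The \'{e}tale case is identical with \emph{almost projective} in place of \emph{flat}.

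For the almost finitely generated/presented flat subcategories I would apply Theorem~\ref{thm:descent1} to the morphism $A_0 \to A_1 \times A_2$, which satisfies condition~(*) by Lemma~\ref{lem:flatdesc}(i). Given a gluing datum of suitable algebras, the base change $B_0 \otimes_{A_0} (A_1 \times A_2) \simeq B_1 \times B_2$ inherits the almost finite generation (resp.\ presentation) property coordinatewise, and flatness of $B_0$ supplies the vanishing $\tor{1}{A_0}{A_1 \times A_2}{B_0} = 0$ needed for Theorem~\ref{thm:descent1}(ii). Combining this with the \'{e}tale descent from the previous paragraph handles $\mathrm{\acute{E}t}_{\mathrm{afp}}$. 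The main obstacle is the bootstrapping step in the (weakly) \'{e}tale argument: verifying that the pullback of the $C_0$-module $B_0$ along $C_0 \to C_i$ really is $B_i$ as a $C_i$-module and that this identification is compatible with the gluing datum $\xi$. This is where the flatness of $B_0$ (established in the first step) is essential, so the proof proceeds by using flat descent to enable the iteration that yields all of the finer descent statements.
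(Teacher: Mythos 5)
Your proposal takes essentially the same approach as the paper's own (very terse) proof: use Theorem~\ref{thm:descent2} after checking that $\pi$ and $T$ respect algebra structure for the flat/(weakly) \'etale subcategories, with the key compatibility $\mu_{B_i/A_i} = \mu_{B_0/A_0} \otimes_{A_0} 1_{A_i}$ (your $C_i \simeq B_i \otimes_{A_i} B_i$ step) enabling the bootstrap, and use Theorem~\ref{thm:descent1} plus Lemma~\ref{lem:flatdesc}(i) for the almost finitely generated/presented cases. Your write-up is more explicit about the iteration over $C_0 \to C_1, C_2 \to C_3$ and correctly flags the compatibility of the induced gluing datum $\xi'$ with $\xi$ as the technical hinge, which is exactly what the paper compresses into the $\mu$ identity.
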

\begin{proof}
The first three follow from Theorem~\ref{thm:descent2} upon ensuring that $\pi$ and $T$ take algebras to algebras. Indeed, the claim for $\text{Alg}_{\text{fl}}$, and for the others, one further utilises that $\mu_{B_i/A_i} = \mu_{B_0/A_0} \otimes_{A_0} 1_{A_i}$ where $B_0$ is an $A_0$-algebra and $B_i := B_0 \otimes_{A_0} A_i$. For $\text{Alg}_{\text{afgfl}}$, one concludes via Theorem~\ref{thm:descent1} and Lemma~\ref{lem:flatdesc}  (i). For $\text{Alg}_{\text{afpfl}}$, suppose $M$ is an $A_0$-algebra such that $A_i \otimes_{A_0} M$ that is both flat and almost finitely presented over $A_i$ for $i=1,2$. We know that $M$ is at the very least flat, but by Theorem~\ref{thm:descent1} and Lemma~\ref{lem:flatdesc} (i), this is enough to conclude that it is also almost finitely presented. $\text{\'{E}t}_{\text{afp}}$ follows similarly.
\end{proof}
\section{Almost Witt vectors and purity}
Suppose we are given an almost set-up $(R, \ideal{m})$ that satisfies Condition (B). Our first question that we hope to answer concerns the lifting of almost set-ups to finite length Witt vectors.\footnote{It was told to the author that some of these results have been published by Gabber and Ramero in their sequel, \textit{Foundations of Almost Ring Theory}. To rectify the situation, in proofs where there was minor overlap, we have chosen to refer the reader to their book.} To fix notation, we set $W_0(A) := A$ as our indexing convention, $\omega_i$ to be the Witt polynomial of degree $p^i$, and $F$ the Witt-vector Frobenius. Before proceeding, we begin with a lemma, 
\begin{lem}\label{lem:powersubset}
Let $R$ be a ring with a ideal $I \subset R$ satisfying Condition (B). Suppose that we have a subset $S \subset I$ of elements for which $I = \sum_{x \in S} Rx$, and that $S$ is both additively and multiplicatively closed. Then, $I = \sum_{x \in S} Rx^k$ for any $k > 1$.
\end{lem}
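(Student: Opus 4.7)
My plan is to use Condition~(B) on $I$ to reduce the problem to a statement about $y^k$ for $y\in I$. Writing $J := \sum_{x\in S} R x^k$, the inclusion $J\subseteq I$ is immediate. For the reverse inclusion, Condition~(B) yields $I = \sum_{y\in I} Ry^k$, so it suffices to verify $y^k \in J$ for every $y\in I$.

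Given $y = \sum_{i=1}^n r_i x_i \in I$ with $x_i\in S$ and $r_i\in R$, I would apply the multinomial theorem:
\[y^k = \sum_{|\alpha|=k} \binom{k}{\alpha}\, r^{\alpha}\, x^{\alpha}.\]
Because $S$ is multiplicatively closed, each $x^{\alpha} = \prod_i x_i^{\alpha_i}$ lies in $S$, and the pure terms $r_i^k x_i^k$ manifestly lie in $J$. For the mixed $\alpha$ (with $\max_i \alpha_i < k$), I would exploit the additive closure of $S$: any non-negative integer combination $\sum c_i x_i$ belongs to $S$, so $(\sum c_i x_i)^k \in J$. Varying the $c_i$ and taking $R$-linear combinations of these power expressions produces identities that express the mixed monomials $\binom{k}{\alpha} x^{\alpha}$ in terms of $k$-th powers of $S$-elements. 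The prototype is the $k=2$ identity
\[(r_1 x_1 + r_2 x_2)^2 = r_1(r_1 - r_2)\,x_1^2 + r_2(r_2 - r_1)\,x_2^2 + r_1 r_2\,(x_1+x_2)^2,\]
which directly witnesses $y^2\in J$ and therefore settles the $k=2$ case.

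The main obstacle I foresee is the appearance of factorial coefficients in the higher polarization identities: for instance, polarization only gives $k!\cdot x_1\cdots x_k \in J$ rather than $x_1\cdots x_k \in J$, so a naive application yields only $k!\cdot I \subseteq J$. To eliminate these factorial obstructions is exactly where Condition~(B) does non-trivial work: since it holds for \emph{every} exponent $\ell > 1$, one may re-decompose elements of $I$ using $\ell$-th powers for $\ell$ chosen with good divisibility (e.g.\ divisible by $k!$ or by relevant primes), and then use congruence/divisibility properties of multinomial coefficients to absorb the factorial factors into the $R$-coefficients. Iterating this trade-off between polarization and Condition~(B), together with the fact that $I^2 = I$ which gives $I = I^{\ell}$ and thereby a rich supply of such decompositions, should allow one to conclude $y^k\in J$ for every $y\in I$ and complete the proof.
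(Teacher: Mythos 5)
Your reduction and your two ingredients are the right ones: polarization over the additively closed set $S$ (which, using $I=I^k$, gives $k!\,I\subseteq J$), and the freedom to invoke Condition (B) at exponents other than $k$; your $k=2$ identity is indeed a complete proof in that case. But for general $k$ the decisive step is left as a hope rather than an argument: you never establish how the factorial is actually removed, i.e.\ you never prove an inclusion of the shape $I\subseteq k!\,I+J$, and the one concrete recipe you offer --- re-decomposing with an exponent $\ell$ divisible by $k!$ --- would fail. For a non-prime-power exponent $\ell$ the mixed multinomial coefficients have greatest common divisor $1$ (for $\ell=6$ one already has $\binom{6}{1,5}=6$, $\binom{6}{2,4}=15$, $\binom{6}{3,3}=20$, whose gcd is $1$), so there is no uniform divisibility with which to absorb the factorial into the $R$-coefficients.

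The mechanism the paper uses, which your sketch circles around but does not pin down, is to work one prime at a time with prime-power exponents. Writing $J_\ell:=\sum_{x\in S}Rx^\ell$ (so $J=J_k$), fix a prime $p$ and choose $r$ with $p^r\ge k$; expanding $\bigl(\sum_j r_jx_j\bigr)^{p^r}$, every mixed coefficient $\binom{p^r}{i_1,\dots,i_n}$ with no $i_j=p^r$ is divisible by $p$, while the pure terms lie in $J_{p^r}\subseteq J_k$ (since $x^{p^r}=x^{p^r-k}\cdot x^k$). Condition (B) at exponent $p^r$ therefore yields the module inclusion $I\subseteq pI+J_k$, and composing these inclusions over the prime factorization of $k!$ gives $I\subseteq k!\,I+J_k$. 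Combined with your polarization half, $k!\,I=k!\,I^k\subseteq J_k$, this gives $I\subseteq J_k$ and finishes the proof. So your overall strategy is essentially the paper's, but the step you describe as using congruence properties of multinomial coefficients is precisely where the proof lives, and as written it is both unproved and partially aimed in a direction (exponents divisible by $k!$) that cannot work.
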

\begin{proof}
One replicates the argument given in \cite[Claim 2.1.9]{gabram}.
\end{proof}
\begin{lem}\label{lem:nilpotentlift}
Let $f: S \to R$ be a surjective map of rings with nilpotent kernel. Then there is a unique ideal lifting $\ideal{m}_S$ of $\ideal{m}$ such that $\ideal{m}_S^2 = \ideal{m}_S$. Furthermore, $\ideal{m}_S$ satisfies Condition (B) if $\ideal{m}$ does.
\end{lem}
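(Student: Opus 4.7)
My candidate is $\ideal{m}_S := J^N$, where $J := f^{-1}(\ideal{m})$ and $N \ge 1$ satisfies $I^N = 0$ (with $I := \ker f$). The crucial preliminary identity is $J = J^2 + I$, obtained by lifting: for $j \in J$, write $f(j) \in \ideal{m} = \ideal{m}^2$ as $\sum f(a_i)f(b_i)$ with $a_i, b_i \in J$, so $j - \sum a_i b_i \in I$. Iterating and using $J^2 I, I^2 \subset I$ gives $J = J^{2^k} + I$ for every $k \ge 0$.

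To verify $(\ideal{m}_S)^2 = \ideal{m}_S$, I pick $k$ with $2^k \ge 2N$ and expand
\[ J^N \;\subset\; (J^{2^k} + I)^N \;=\; \sum_{j=0}^{N} J^{\,2^k(N-j)}\, I^{\,j}. \]
The $j = N$ summand vanishes since $I^N = 0$; every other summand contains at least $2^k \ge 2N$ factors drawn from $J$ (using $I \subset J$ to absorb factors of $I$), hence lies in $J^{2N}$. This forces $J^N = J^{2N}$, i.e.\ $\ideal{m}_S$ is idempotent, while $f(\ideal{m}_S) = \ideal{m}^N = \ideal{m}$ is immediate. For uniqueness, any idempotent lift $\ideal{m}'_S$ is contained in $J$, so $\ideal{m}'_S = (\ideal{m}'_S)^N \subset J^N = \ideal{m}_S$; conversely, surjectivity onto $\ideal{m}$ gives $J = \ideal{m}'_S + I$, and expanding $(\ideal{m}'_S + I)^N$ the $I^N$ term vanishes while every other summand, having at least one factor from $\ideal{m}'_S$, lies in $\ideal{m}'_S$, so $J^N \subset \ideal{m}'_S$.

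For Condition (B), fix $k > 1$ and let $I_k \subset S$ be the ideal generated by $\{x^k : x \in \ideal{m}_S\}$. Given $y \in \ideal{m}_S$, use Condition (B) for $\ideal{m}$ to write $f(y) = \sum r_i z_i^k$ with $z_i \in \ideal{m}$, and lift each $z_i$ to $\tilde{z}_i \in \ideal{m}_S$ (possible since $f(\ideal{m}_S) = \ideal{m}$) and each $r_i$ to $\tilde{r}_i \in S$. Then $y - \sum \tilde{r}_i \tilde{z}_i^k \in I \cap \ideal{m}_S$, which shows
\[ \ideal{m}_S \;=\; I_k \;+\; (I \cap \ideal{m}_S). \]
Using $\ideal{m}_S = \ideal{m}_S^N$ and expanding once more, the $j = N$ summand vanishes because $(I \cap \ideal{m}_S)^N \subset I^N = 0$, while every other summand has at least one factor of $I_k$ and therefore lies in $I_k$. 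Hence $\ideal{m}_S \subset I_k$, establishing Condition (B).

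The whole argument is driven by combining the idempotency $\ideal{m}_S = \ideal{m}_S^N$ with the nilpotence $I^N = 0$ in a binomial expansion; the main conceptual step is guessing $\ideal{m}_S = J^N$ and isolating $J = J^2 + I$, after which existence, uniqueness, and Condition (B) all fall out of essentially the same manipulation. I do not anticipate a genuine obstacle beyond this bookkeeping.
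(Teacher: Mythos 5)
Your proof is correct and follows essentially the same route as the paper: both hinge on the identity $f^{-1}(\ideal{m}) = f^{-1}(\ideal{m})^2 + \ker(f)$ and on expanding a high power so that nilpotence of the kernel kills the only bad term, with identical uniqueness and Condition (B) arguments. The only cosmetic difference is that you name the candidate directly as $J^N$, whereas the paper takes the stabilized intersection $\bigcap_m f^{-1}(\ideal{m})^m$, which is the same ideal once the chain of powers stabilizes.
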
 
\begin{proof}
See~\cite[Lemma 14.7.1]{gabram2}.
\end{proof}
The following theorem will be used to define a `unique' almost set-up on finite length Witt vectors.
\begin{thm}\label{thm:wittlift}
Let $(R, \ideal{m})$ be an almost set-up, with $\ideal{m}$ satsifying Condition (B). Then there exist ideals $\ideal{m}_n \subset W_n(R)$ such that:
\begin{enumerate}[(i)]
\item $\ideal{m}_n^2 = \ideal{m}_n$, and hence $(W_n(R), \ideal{m}_n)$ is too an almost set-up.
\item $\omega_n(\ideal{m}_n)R = \ideal{m}$ and $\text{pr}_n(\ideal{m}_n) = \ideal{m}_{n-1}$ where $\text{pr}_n: W_n(R) \to W_{n-1}(R)$ is the canonical projection.
\end{enumerate}
\end{thm}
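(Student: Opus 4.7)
My approach is by induction on $n$, relying crucially on Lemma~\ref{lem:nilpotentlift}. The base case $n = 1$ is trivial since $W_1(R) = R$, so we may take $\ideal{m}_1 := \ideal{m}$.

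For the inductive step, the essential point is that the canonical projection $\text{pr}_n : W_n(R) \to W_{n-1}(R)$ has nilpotent kernel. Its kernel is the image of the $(n-1)$-fold Verschiebung $V^{n-1}(R) \subseteq W_n(R)$, consisting of Witt vectors supported only in the last coordinate. The standard relation $V^i(R) \cdot V^j(R) \subseteq V^{i+j}(R)$ together with $V^{k}(R) = 0$ in $W_n(R)$ for $k \ge n$ yields $V^{n-1}(R) \cdot V^{n-1}(R) \subseteq V^{2n-2}(R) = 0$ whenever $n \ge 2$; the kernel is even square-zero, so certainly the hypotheses of Lemma~\ref{lem:nilpotentlift} are met.

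Given $\ideal{m}_{n-1}$ satisfying (i) and (ii) by the inductive hypothesis, Lemma~\ref{lem:nilpotentlift} applied to $\text{pr}_n$ produces a unique idempotent ideal $\ideal{m}_n \subseteq W_n(R)$ with $\text{pr}_n(\ideal{m}_n) = \ideal{m}_{n-1}$, and, crucially, $\ideal{m}_n$ inherits Condition (B). This immediately delivers (i) together with the second half of (ii). For the remaining condition $\omega_n(\ideal{m}_n) R = \ideal{m}$, one uses that $\omega_n$ factors through the full tower of truncations to $R = W_1(R)$, namely $\omega_n = \omega_{n-1} \circ \text{pr}_n$, so that $\omega_n(\ideal{m}_n) R = \omega_{n-1}\bigl(\text{pr}_n(\ideal{m}_n)\bigr) R = \omega_{n-1}(\ideal{m}_{n-1}) R = \ideal{m}$ by induction.

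The principal obstacle is confirming the nilpotence of $\ker(\text{pr}_n)$ via the Verschiebung filtration; once this is in hand, the theorem becomes a direct application of the nilpotent lifting lemma, and no explicit manipulation of Witt coordinates (nor recourse to Lemma~\ref{lem:powersubset}) is needed at this stage. The uniqueness clause of Lemma~\ref{lem:nilpotentlift} moreover ensures that the sequence $(\ideal{m}_n)_{n\ge 1}$ is canonically determined by $\ideal{m}$, which will be convenient for later compatibility statements.
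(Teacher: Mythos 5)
Your inductive step rests on the claim that $\ker(\text{pr}_n : W_n(R) \to W_{n-1}(R))$ is nilpotent, justified by the relation $V^i(R)\cdot V^j(R) \subseteq V^{i+j}(R)$. That relation is false for general $R$ (it only holds in settings where $p$ is suitably nilpotent, e.g.\ for perfect $\mathbb{F}_p$-algebras where $pW_n = VW_n$). In general one has $V^{n-1}(a)\cdot V^{n-1}(b) = V^{n-1}(p^{n-1}ab)$, as one checks on ghost components; so for any $p$-torsion-free $R$ (say $R = \mathbb{Z}_p[p^{1/p^{\infty}}]$ with $\ideal{m}=(p^{1/p^{\infty}})$) the element $V(1) \in \ker(\text{pr}_2)$ satisfies $V(1)^k = V(p^{k-1}) \neq 0$ for all $k$, and the kernel is not nilpotent at all, let alone square-zero. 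Hence Lemma~\ref{lem:nilpotentlift} simply does not apply to $\text{pr}_n$, and the whole inductive step collapses. A second, independent error: you verify $\omega_n(\ideal{m}_n)R = \ideal{m}$ via the factorization $\omega_n = \omega_{n-1}\circ \text{pr}_n$, but the degree-$p^n$ Witt polynomial depends on the last Witt coordinate (already $\omega_1(x_0,x_1) = x_0^p + px_1$), so it does not factor through the truncation that forgets that coordinate; this part of condition (ii) is exactly the part your argument never addresses.

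This is precisely why the paper takes a longer route. It considers $\alpha_n = (\text{pr}_n, \omega_n) : W_n(R) \to W_{n-1}(R)\times R$, whose kernel $I_R$ consists of Verschiebung elements $V^{n-1}(a)$ killed by the relevant power of $p$, and therefore genuinely \emph{is} square-zero; only for this map can Lemma~\ref{lem:nilpotentlift} be invoked. But then one must first produce an idempotent ideal on the image $\overline{W_n}(R)$, which is a fibre product of $W_{n-1}(R)$ and $R$ over $R/p^n$; this is the content of the gluing Theorem~\ref{thm:gluinglift}, and it requires checking that $\ideal{m}$ and $\overline{\omega_n}(\ideal{m}_{n-1})$ generate the same ideal of $R/p^n$ — a step that uses Condition~(B) (whence Lemma~\ref{lem:powersubset}, which you asserted could be avoided) together with Lemma~\ref{lem:nilpotentlift} applied to $R \twoheadrightarrow R/p^n$. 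So the correct proof needs both the gluing construction and the nilpotent lifting, whereas your proposal tries to get by with the lifting lemma alone applied to a map whose kernel is not nilpotent.
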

We have two maps $\omega_n: W_{n}(R) \to R$ and $\text{pr}_{n}: W_n(R) \twoheadrightarrow W_{n-1}(R)$. Denote by $\alpha_n$ the induced map $W_n(R) \to W_{n-1}(R) \times R$, and let $\overline{W_n}(R):= \im(\alpha_n)$. We have a gluing diagram:
\begin{equation}\begin{tikzcd}
\overline{W_n}(R) \arrow[r, "\text{pr}_n"] \arrow[d, "\omega'_n"] & W_{n-1}(R)\arrow[d, "\overline{\omega_n}"] \\
R \arrow[r, twoheadrightarrow, "\pi"] &R/p^n
\end{tikzcd}\end{equation}
One may check the kernel $I_R := \ker(\alpha_n)$ is square zero i.e. $I_R^2 = 0$. \cite[Proposition 8.1 (b)]{borger2015basic} Our first step to proving Theorem~\ref{thm:wittlift} is to lift $\ideal{m}$ to $\overline{W_n}(R)$. To do so, we work more broadly.
\begin{thm}\label{thm:gluinglift}
Suppose we are given a gluing diagram:
\[\begin{tikzcd}
A_0 \arrow[r, "f_1"] \arrow[d, "f_2"] & A_1 \arrow[d, "g_1"]\\ A_2 \arrow[r, "g_2"] & A_3
\end{tikzcd}\]
with $g_2$ (and hence $f_1$) surjective, and ideals $\ideal{m}_1 \subset A_1$, $\ideal{m}_2 \subset A_2$ such that they agree over $A_3$ i.e. $g_1(\ideal{m}_1) A_3 = g_2(\ideal{m}_2)$. Then there exists a unique ideal $\ideal{m} \subset A_0$ with the following properties:
\begin{enumerate}[(i)]
\item $\ideal{m}^2 = \ideal{m}$, and hence $(A_0, \ideal{m})$ is an almost set-up.
\item $f_1(\ideal{m})A_1 = \ideal{m}_1$ and $f_2(\ideal{m})A_2 = \ideal{m}_2$.
\end{enumerate}
Furthermore, if both $\ideal{m}_1$ and $\ideal{m}_2$ satisfy Condition (B), then so does $\ideal{m}$.
\end{thm}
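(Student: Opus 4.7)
The plan is to take as candidate the naive intersection $\ideal{m}' := f_1^{-1}(\ideal{m}_1) \cap f_2^{-1}(\ideal{m}_2)$, which under the pullback description $A_0 = A_1 \times_{A_3} A_2$ (this holds because gluing diagrams are $2$-cartesian) becomes $\{(a_1, a_2) \in A_0 : a_1 \in \ideal{m}_1,\, a_2 \in \ideal{m}_2\}$. The image conditions $f_i(\ideal{m}')A_i = \ideal{m}_i$ come out immediately: given $a_1 \in \ideal{m}_1$, the compatibility $g_1(\ideal{m}_1)A_3 = g_2(\ideal{m}_2)$ together with surjectivity of $g_2|_{\ideal{m}_2}$ onto $g_2(\ideal{m}_2)$ produces some $a_2 \in \ideal{m}_2$ with $g_2(a_2) = g_1(a_1)$, giving $(a_1, a_2) \in \ideal{m}'$; the analogous computation for $f_2$ uses surjectivity of $g_2$ to lift scalars from $A_3$ to $A_2$, with the resulting error absorbed by an element of $\ker g_2 \cap \ideal{m}_2$ that sits in $\ideal{m}'$ as $(0, k) \in A_0$.

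The hard part will be idempotence. Starting from $(a_1, a_2) \in \ideal{m}'$, the decomposition $a_1 = \sum b_i c_i$ from $\ideal{m}_1 = \ideal{m}_1^2$ and lifts $(b_i, \beta_i), (c_i, \gamma_i) \in \ideal{m}'$ yield $(a_1, a_2) = \sum (b_i, \beta_i)(c_i, \gamma_i) + (0, k)$ with $k \in \ker g_2 \cap \ideal{m}_2$, but the residue $(0, k)$ generally fails to lie in $(\ideal{m}')^2$, so $\ideal{m}'$ need not itself be idempotent. I would therefore replace it by the largest idempotent subideal $\ideal{m} := \sum\{I \subseteq \ideal{m}' : I^2 = I\}$, which is automatically idempotent (any idempotent $I \subseteq \ideal{m}$ satisfies $I = I^2 \subseteq \ideal{m}^2$), and equivalently is the stable value of the decreasing chain $\{(\ideal{m}')^n\}$, once that chain is shown to stabilize. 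For the image conditions on $\ideal{m}$ it then suffices to verify such stabilization, since $f_i((\ideal{m}')^n)A_i = \ideal{m}_i^n A_i = \ideal{m}_i$ for every $n$. The stabilization claim reduces to controlling $(\ideal{m}')^n \cap \ker f_1$ (which sits inside $\ker g_2 \cap \ideal{m}_2$ via the projection $f_2$); by careful bookkeeping using $\ideal{m}_i = \ideal{m}_i^2$, the set of reachable second coordinates should settle at the constant value $(\ker g_2 \cap \ideal{m}_2) \cdot \ideal{m}_2$ from $n = 2$ onward.

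Uniqueness and part~(iii) are formal once existence is in place. Any idempotent $\tilde{\ideal{m}} \subseteq A_0$ satisfying $f_i(\tilde{\ideal{m}})A_i = \ideal{m}_i$ necessarily lies inside $\ideal{m}'$ (since $f_i(\tilde{\ideal{m}}) \subseteq \ideal{m}_i$), so $\tilde{\ideal{m}} = \tilde{\ideal{m}}^n \subseteq (\ideal{m}')^n = \ideal{m}$ for $n$ large; the reverse containment decomposes generators of $\ideal{m}$ as products of elements of $\tilde{\ideal{m}}$ using that $f_i(\tilde{\ideal{m}}) \cdot A_i = \ideal{m}_i$. For Condition~(B), mirroring the last paragraph of Lemma~\ref{lem:nilpotentlift}, the subideal $\ideal{m}_{[k]} \subseteq \ideal{m}$ generated by $k$-th powers satisfies the image conditions (since Condition~(B) for $\ideal{m}_1, \ideal{m}_2$ makes their $k$-th-power subideals still generate) and is itself idempotent, so by uniqueness $\ideal{m}_{[k]} = \ideal{m}$.

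The chief obstacle is this stabilization of $(\ideal{m}')^n \cap \ker f_1$. In the symmetric toy model $A_1 = A_2 = R$, $A_3 = R/J$, $g_1 = g_2 = \pi$, the identity $r_1 s_1 - r_2 s_2 = (r_1 - r_2) s_1 + r_2 (s_1 - s_2)$ forces the second coordinate of any element of $(\ideal{m}')^2 \cap \ker f_1$ to lie in $J\ideal{m} = \ker g_2 \cdot \ideal{m}_2$, making the chain constant from $n = 2$ onward. Translating this telescoping to the asymmetric gluing will require exploiting the fiber-product structure more carefully: a vanishing sum $\sum a_{1,i} c_{1,i} = 0$ in $A_1$ arising from a representative of $(\ideal{m}')^2 \cap \ker f_1$ must be converted, via the common $A_3$-data attached to each pair $(a_{1,i}, a_{2,i}), (c_{1,i}, c_{2,i}) \in \ideal{m}'$, into a controlled expression for $\sum a_{2,i} c_{2,i}$ inside $\ker g_2 \cdot \ideal{m}_2$.
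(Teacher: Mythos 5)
Your candidate ideal $\ideal{m}' = f_1^{-1}(\ideal{m}_1) \cap f_2^{-1}(\ideal{m}_2)$ is exactly the paper's $I := \ideal{m}_1 \times_{A_3} \ideal{m}_2$, and the overall plan---take the stable value of the chain $\{(\ideal{m}')^n\}_n$, note it automatically inherits the image conditions since each $(\ideal{m}')^n$ does, then run uniqueness and Condition (B) as formal consequences---is precisely the paper's plan. But you are right to flag the stabilization as the chief obstacle: that is the actual content of the theorem, and what you offer there is a toy-model plausibility argument together with the admission that it does not transport to the asymmetric gluing. So the proposal as written has a genuine gap exactly where you placed it.

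The piece you are missing is the lemma $J^2 \subseteq (\ideal{m}')^3$, where $J := \ideal{m}' \cap \ker f_1 = \{(0,k) : k \in \ker g_2 \cap \ideal{m}_2\}$. Combined with the containment $\ideal{m}' \subseteq (\ideal{m}')^2 + J$ that you already derived (and $J \subseteq \ideal{m}'$), this gives $(\ideal{m}')^2 \subseteq (\ideal{m}')^4 + (\ideal{m}')^2 J + J^2 \subseteq (\ideal{m}')^3$, so the chain stabilizes already at $n=2$. To prove the lemma, take $(0,j_1),(0,j_2) \in J$. Since $f_2(\ideal{m}')A_2 = \ideal{m}_2$ and $\ideal{m}_2$ is idempotent, one has $\ideal{m}_2 = \ideal{m}_2^2 \subseteq f_2(\ideal{m}')^2 A_2$, so write $j_1 = \sum_i c_i b_i b'_i$ with $b_i, b'_i \in f_2(\ideal{m}')$ and $c_i \in A_2$, and pick lifts $(a_i,b_i),(a'_i,b'_i)\in\ideal{m}'$. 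Because $j_2 \in \ker g_2 \cap \ideal{m}_2$, the pair $(0, c_i j_2)$ also lies in $\ideal{m}'$ (its coordinates agree over $A_3$ since $g_2(c_i j_2)=0=g_1(0)$). Then
\[ (0,j_1)(0,j_2) = \sum_i (a_i,b_i)(a'_i,b'_i)(0, c_i j_2) \in (\ideal{m}')^3, \]
since both sides have first coordinate $0$ and second coordinate $j_1 j_2 = \sum_i b_i b'_i \cdot c_i j_2$. This is the asymmetric replacement for your telescoping identity: rather than pairing off coordinate differences, you rewrite one factor from $J$ entirely inside $f_2(\ideal{m}')^2$ up to $A_2$-scalars, and absorb those scalars against the other factor (which is where membership in $\ker g_2$ is used). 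With this lemma in hand, your uniqueness and Condition~(B) arguments go through along the lines you sketched.
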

\begin{proof}
For the proofs we refer the reader to \cite[Proposition 14.7.5]{gabram2}. We also present a slightly different approach to the last statement: it suffices to show that the ideals $\ideal{m}'_{k} \subset \ideal{m}$ generated by the $k^{\text{th}}$-powers of $\ideal{m}$ satisfies condition (ii). It is indeed clear that $f_1(\ideal{m}'_{k}) = \ideal{m}_1$ because $\ideal{m}_1$ has Condition (B). Since $f_2(\ideal{m})A_2 = \ideal{m}_2$, by Lemma~\ref{lem:powersubset}, it is also clear that $f_2(\ideal{m}'_k)A_2 = \ideal{m}_2$.
\end{proof}
\begin{proof}[Proof of Theorem~\ref{thm:wittlift}]We proceed by induction on $n$, where the base case $n=0$ is tautological. Now, assuming the existence of such lift $\ideal{m}_{n-1} \subset W_{n-1}(R)$, we shall construct $\ideal{m}_n$. Recall the gluing diagram:
\[\begin{tikzcd}
\overline{W_n}(R) \arrow[r, "\text{pr}_n'"] \arrow[d, "\omega'_n"] & W_{n-1}(R)\arrow[d, "\overline{\omega_n}"] \\
R \arrow[r, twoheadrightarrow, "\pi"] &R/p^n
\end{tikzcd}\]
We want to show that $\pi(\ideal{m}) R/p^n = \overline{\omega_n}(\ideal{m}_{n-1}) R/p^n$. Their reductions modulo $p$ are equal because $\ideal{m}_{n-1}$ satisfies Condition (B). Since both ideals are idempotent, and $p$ is nilpotent in $R/p^n$, we conclude equality in $R/p^n$ by Lemma~\ref{lem:nilpotentlift}.\\
\indent We can then apply Theorem~\ref{thm:gluinglift} to obtain an ideal $\overline{\ideal{m}_{n}}$ such that  $\omega_n(\overline{\ideal{m}_n})R = \ideal{m}$ and $\text{pr}_n(\overline{\ideal{m}_n}) = \ideal{m}_{n-1}$. Since $I_R$ is nilpotent, the ideal $\overline{\ideal{m}_n} \subset \overline{W_n}(R)$ lifts to an ideal $\ideal{m}_n \subset W_n(R)$ satisfying the desired conditions.
\end{proof}
All in all, given an almost set-up $(R, \ideal{m})$, we have constructed almost set-ups $(W_n(R), \ideal{m}_n)$. We want to show that $W_n$ preserves almost isomorphisms. Given $R$-algebras $A, B$ and an almost isomorphism $f: A \to B$, we hope the kernel and cokernel of $W_nf$ are killed by $\ideal{m}_{n}$. For the kernel, by induction, this reduces to $V_n(\ker(f))/V_{n+1}(\ker(f)) \simeq \ker(f)$ being killed by $\omega_n(\ideal{m}_n) = \ideal{m}$; clear. A similar argument applies for the cokernel, and therefore, $W_n$ becomes a well defined endofunctor in the category of $\cal{B}^a$-Alg (notation of \ref{fibalm}). We also note that $W_n(A) = W_n(A_*)^a$, which will be used throughout this section without further notice.
\begin{rem}If we were primarily interested in ideals such as $\ideal{m} = (f^{1/p^{\infty}})$, then the whole discussion above simplifies greatly. 
\end{rem}
\indent Condition (B) gives $\omega_i(\ideal{m}_n) A = \ideal{m}$ and $F(\ideal{m}_n) A= \ideal{m}$, so the morphisms are defined in $\cal{B}$ and give natural transformations between $W_n(-)$ and the identity, $W_{n-1}(-)$ respectively. These results allow us to consider these morphisms in the almost category, and they behave as expected. For a morphism of rings $f: A \to B$ and a $B$-module $M$, we will use the notation $f_*(M)$ for when we want to view $M$ as an $A$-module. For an arbitrary $R^a$-algebra $A$, set $A_0:=A/pA$, and for $\mathbb{F}_p$-algebras $A_0 \to B_0$, let $\Phi_{B_0/A_0}$ denote the relative Frobenius. For an $A$-algebra $B$, set $V_n(B):=V_n(B_*)^a$, where it is straight forward to see the isomorphism $V_i(A)/V_{i+1}(A) \simeq \omega_{i*}(A)$.
\begin{thm}\label{thm:wittdescent}
Let $A \to B$ be a flat morphism of $R^a$-algebras, and suppose that $\Phi_{B_0/A_0}$ is an isomorphism. Then,  $W_n(f): W_n(A) \to W_n(B)$ is flat and $\omega_{i*}(B) \simeq W_n(B) \otimes_{W_n(A)} \omega_{i*}(A)$ for each $0 \le i \le n$.
\end{thm}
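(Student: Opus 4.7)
The plan is by induction on $n$, with the base $n=0$ being the given flatness of $f$. For the inductive step, I first establish flatness of $W_n(f)$ through two applications of the descent/criterion machinery, then derive the tensor identity from flatness plus the Frobenius hypothesis.

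The first stage proves flatness of $\overline{W_n}(f)$. Theorem~\ref{thm:wittlift} presents $\overline{W_n}(A) = W_{n-1}(A) \times_{A/p^n} A$ (and likewise for $B$). I apply the gluing descent, Corollary~\ref{cor:descent3}, to the pair $(W_{n-1}(B), B)$: $W_{n-1}(f)$ is flat by induction, $f$ is flat by hypothesis, and the necessary compatibility $W_{n-1}(B) \otimes_{W_{n-1}(A)} A/p^n \simeq B/p^n$ I prove by a secondary induction on $k \le n$. At $k=1$, this reduces (by the inductive hypothesis at level $n-1$ with $i=0$) to the isomorphism $B_0 \otimes_{A_0, F_{A_0}^n} A_0 \simeq B_0$, i.e.\ $\Phi_{B_0/A_0}^{(n)}$ iso (a consequence of $\Phi_{B_0/A_0}$ iso); the step $k-1 \to k$ follows from flatness of $W_{n-1}(f)$ and the five lemma on $0 \to p^{k-1}X/p^k \to X/p^k \to X/p^{k-1} \to 0$ for $X = A, B$.

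The second stage lifts flatness from $\overline{W_n}(f)$ to $W_n(f)$. The kernel of $W_n(A) \twoheadrightarrow \overline{W_n}(A)$ is the square-zero ideal $I_A = V^n(A[p^n])$, and similarly $I_B = V^n(B[p^n])$. I apply the local flatness criterion, Corollary~\ref{cor:almlocflat}, to $M = W_n(B)$. The required isomorphism $W_n(B)/I_A W_n(B) \simeq \overline{W_n}(B)$ reduces to $I_A \cdot W_n(B) = I_B$; via the formula $V^n(a) \cdot w = V^n(\omega_n^B(w) f(a))$, this is in turn equivalent to $\omega_n^B(W_n(B)) \cdot f(A[p^n]) = B[p^n]$. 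Flatness of $f$ supplies $B[p^n] = B \cdot f(A[p^n])$, and the iterated Frobenius identity $B = A \cdot B^{p^n} + p^n B$---bootstrapped from $A \cdot B^{p^n} + pB = B$ (from $\Phi_{B_0/A_0}^{(n)}$ iso) by expanding powers of $p$---closes the argument. The companion isomorphism $I_A \otimes_{\overline{W_n}(A)} \overline{W_n}(B) \xrightarrow{\sim} I_B$ reduces, via the identification $A \otimes_{\overline{W_n}(A)} \overline{W_n}(B) \simeq B$ supplied by the gluing descent, to the flat identification $A[p^n] \otimes_A B \simeq B[p^n]$.

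The final stage establishes the tensor identity. With $W_n(f)$ flat, tensoring the short exact sequence $0 \to V^n W_n(A) \to W_n(A) \to W_{n-1}(A) \to 0$ with $W_n(B)$ keeps it exact. Identifying $V^n W_n(A) \simeq \omega_{n,*}(A)$ and $V^n W_n(B) \simeq \omega_{n,*}(B)$, the map $W_n(B) \otimes_{W_n(A)} \omega_{n,*}(A) \to \omega_{n,*}(B)$ is injective by flatness and surjective by $B = A \cdot B^{p^n} + p^n B$. This gives (ii) for $i=n$ and the collateral $W_n(B) \otimes_{W_n(A)} W_{n-1}(A) \simeq W_{n-1}(B)$. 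For $0 \le i \le n-1$, the $W_n(A)$-action on $\omega_{i,*}(A)$ factors through $\text{pr}_n$, so (ii) reduces to the inductive hypothesis. The main obstacle throughout is propagating the Frobenius hypothesis (a mod-$p$ statement) to mod-$p^n$ identities, all of which distil to the single bootstrapped identity $B = A \cdot B^{p^n} + p^n B$.
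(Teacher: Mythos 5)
Your proposal is correct and follows essentially the same architecture as the paper's proof: induction on $n$; the gluing square $\overline{W_n}(A)=W_{n-1}(A)\times_{A/p^n}A$; flat descent (Theorem~\ref{thm:descent2}, Corollary~\ref{cor:descent3}) applied to the datum $(W_{n-1}(B),B,\xi)$ to get flatness of $\overline{W_n}(B)$ over $\overline{W_n}(A)$ together with the base-change identities; then the square-zero ideal $I_A\simeq \text{Ann}_A(p^n)$ and the local flatness criterion (Corollary~\ref{cor:almlocflat}) to pass from $\overline{W_n}$ to $W_n$; and finally the Verschiebung exact sequence to extract the ghost-component identities. Two local steps are handled differently, both harmlessly: for the gluing-datum compatibility modulo $p^n$ you run a d\'evissage on $0\to p^{k-1}X/p^kX\to X/p^kX\to X/p^{k-1}X\to 0$ with the five lemma, whereas the paper invokes Lemma~\ref{lem:nilflatiso} (isomorphism mod $p$, flat target, nilpotent kernel) to reduce directly to the mod-$p$ statement (in your d\'evissage note that $p^{k-1}A/p^kA$ is in general only a quotient of $A_0$ when $p$ is a zero divisor; flatness of $B$ over $A$ together with your $k=1$ case still identifies $W_{n-1}(B)\otimes_{W_{n-1}(A)}p^{k-1}A/p^kA$ with $p^{k-1}B/p^kB$, so the five lemma applies); and for $I_A\cdot W_n(B)=I_B$ you use the bootstrapped identity $B=A\cdot B^{p^n}+p^nB$, where the paper proves the corresponding epimorphism diagrammatically in Lemma~\ref{lem:annwitt}. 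The one caveat is that identities such as $B=A\cdot B^p+pB$ are element-level statements while $A$ and $B$ are almost algebras: the hypothesis only gives that $\Phi_{B_0/A_0}$ is an isomorphism in the almost category, so lifting almost elements along it introduces factors $\epsilon\in\ideal{m}$, and your bootstrap should be recast as the assertion that the relevant cokernels are almost zero (running the same manipulation on almost elements and absorbing the $\epsilon$'s via $\ideal{m}^2=\ideal{m}$); this is precisely the role of Lemma~\ref{lem:annwitt} in the paper, and your argument goes through verbatim once phrased that way.
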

Let us examine the immediate effects of this:
\begin{cor}
Suppose $f: A \to B$ is weakly \'{e}tale, then $W_n(B) \otimes_{W_n(A)} W_n(C) \simeq W_n(B \otimes_A C)$ for $C$ an $A$-algebra, and $W_n(f): W_n(A) \to W_n(B)$ is weakly \'{e}tale.
\end{cor}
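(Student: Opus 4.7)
The starting point is to verify that Theorem~\ref{thm:wittdescent} applies to $f$ itself. Weak étaleness is preserved under base change, so $f_0:A_0\to B_0$ is weakly étale, and by Theorem~\ref{thm:frobpushout} the relative Frobenius $\Phi_{B_0/A_0}$ is an isomorphism. Since $f$ is in particular flat, Theorem~\ref{thm:wittdescent} gives that $W_n(f)$ is flat and that $\omega_{i*}(B)\simeq W_n(B)\otimes_{W_n(A)}\omega_{i*}(A)$ for $0\le i\le n$.

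For the tensor-product isomorphism, I would use the Verschiebung filtration $V_\bullet(W_n(C))$. The natural map $\phi:W_n(B)\otimes_{W_n(A)}W_n(C)\to W_n(B\otimes_A C)$ sends the submodules $W_n(B)\otimes_{W_n(A)}V_i(W_n(C))$ (which are genuine submodules by the flatness of $W_n(f)$) into $V_i(W_n(B\otimes_A C))$, because $V_i$ is preserved by ring maps. The $i$th graded piece of the source equals $W_n(B)\otimes_{W_n(A)}\omega_{i*}(C)=\bigl(W_n(B)\otimes_{W_n(A)}\omega_{i*}(A)\bigr)\otimes_A C$, which by Theorem~\ref{thm:wittdescent} identifies with $\omega_{i*}(B)\otimes_A C=\omega_{i*}(B\otimes_A C)$, matching the $i$th graded piece of the target. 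A direct trace through the formula $V_i(x)\cdot y=V_i(xF^i(y))$ together with $\omega_0\circ F^i=\omega_i$ shows that the induced map on graded pieces is the canonical isomorphism. Then a five-lemma induction, starting from the bottom $V_{n-1}=V_{n-1}/V_n$ and climbing up the filtration, promotes this to the assertion that $\phi$ itself is an isomorphism.

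For the weak étale conclusion, I would apply the isomorphism just proved with $C=B$: by functoriality, the multiplication map $W_n(B)\otimes_{W_n(A)}W_n(B)\to W_n(B)$ is identified with $W_n(\mu_{B/A})$ under the iso. It therefore suffices to prove $W_n(\mu_{B/A})$ is flat. But $\mu_{B/A}$ is itself weakly étale: the inclusion $\iota:B\to B\otimes_A B$ is weakly étale as a base change of $f$, and $\mu_{B/A}\circ\iota=1_B$ is weakly étale, so Proposition~\ref{prop:assortlem}(iv) gives that $\mu_{B/A}$ is weakly étale. In particular $\mu_{B/A}$ is flat, and Theorem~\ref{thm:frobpushout} again applied to $(\mu_{B/A})_0$ supplies the Frobenius hypothesis needed to invoke Theorem~\ref{thm:wittdescent} on $\mu_{B/A}$. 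Hence $W_n(\mu_{B/A})$ is flat, so $W_n(f)$ is weakly unramified and, combined with flatness from Step~1, weakly étale.

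The chief technical obstacle is the bookkeeping on graded pieces in the middle step: one must check that the induced map on the $i$th graded piece really is the canonical isomorphism and not some twist by a power of Frobenius, because both the Verschiebung filtration on the right-hand side and the identifications $\omega_{i*}(B)\simeq W_n(B)\otimes_{W_n(A)}\omega_{i*}(A)$ involve $F^i$ implicitly through the ghost components. Once this compatibility is pinned down, the rest is a clean application of the five lemma and of the descent machinery already established.
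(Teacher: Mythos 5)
Your proposal is correct and takes essentially the same route as the paper: an induction along the Verschiebung filtration $0 \to V_{i+1}(C) \to V_i(C) \to \omega_{i*}(C) \to 0$, with graded pieces identified via the second part of Theorem~\ref{thm:wittdescent}, and then the case $C=B$ combined with Theorem~\ref{thm:wittdescent} applied to $\mu_{B/A}$ to obtain weak unramifiedness of $W_n(f)$. Your explicit check that $\mu_{B/A}$ is weakly \'{e}tale (base change plus Proposition~\ref{prop:assortlem}), which supplies the relative Frobenius hypothesis via Theorem~\ref{thm:frobpushout}, merely spells out the step the paper compresses into ``the conclusion follows.''
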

\begin{proof}
Note since $A \to B$ is weakly \'{e}tale, therefore so is $A_0 \to B_0$, and in particular $\Phi_{B_0/A_0}$ is an isomorphism. To prove the first proposition, by considering the exact sequences $0 \to V_{i+1}(C) \to V_i(C) \to w_{i*}(C) \to 0$, we reduce by induction to proving the following isomorphism of $W_n(A)$-modules:
\[W_n(B) \otimes_{W_n(A)} \omega_{i*}(C) \simeq \omega_{i*}(B) \otimes_{\omega_{i*}(A)} \omega_{i*}(C).\]
Noting that $\omega_{i*}(C)$ is an $\omega_{i*}(A)$-module, by base-change this reduces to the second part of Theorem~\ref{thm:wittdescent}. To prove $W_n(f)$ is weakly \'{e}tale, we have that the multiplication map $\mu_B: B \otimes_A B \to B$ is flat, but in particular also weakly \'{e}tale too. Therefore by the first part of this proposition, $W_n(\mu_B): W_n(B) \otimes_{W_n(A)} W_n(B) \to W_n(B)$ is flat, and the conclusion follows.
\end{proof}
\begin{cor}\label{cor:wittpushoutfrob}
Assume the conditions of Theorem~\ref{thm:wittdescent}; the diagram:
\[\begin{tikzcd}W_n(A) \arrow[r, "F"] \arrow[d, "W_n f"] & W_{n-1}(A) \arrow[d, "W_{n-1}f"] \\
W_n(B) \arrow[r, "F"] & W_{n-1}(B) \end{tikzcd}\]
is cocartesian i.e. there exists an isomorphism $W_n(B) \otimes_{W_n(A)} W_{n-1}(A) \to W_{n-1}(B)$ of $W_{n-1}(A)$-modules.
\end{cor}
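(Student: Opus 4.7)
The plan is to filter both sides of the claimed isomorphism by iterates of the Verschiebung and apply Theorem~\ref{thm:wittdescent} to match graded pieces. Since $W_n(B)$ is flat over $W_n(A)$ by that theorem, tensoring with $W_n(B)$ preserves short exact sequences; the identity $\omega_i \circ F = \omega_{i+1}$ will control how ghost-induced module structures shift when scalars are restricted along Frobenius.

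I would first note that the Verschiebung filtration $W_{n-1}(A) \supset V_1 \supset \cdots \supset V_{n-1} = 0$ has graded pieces $V_i/V_{i+1} \simeq \omega_{i*}(A)$. Viewed as a $W_n(A)$-module via $F: W_n(A) \to W_{n-1}(A)$, the identity $\omega_i \circ F = \omega_{i+1}$ turns each graded piece into $\omega_{(i+1)*}(A)$. Tensoring with $W_n(B)$ preserves the filtration by flatness and, by Theorem~\ref{thm:wittdescent}, yields graded pieces isomorphic to $\omega_{(i+1)*}(B)$ for $0 \le i \le n-2$. On the other hand, the analogous Verschiebung filtration on $W_{n-1}(B)$ has graded pieces $\omega_{i*}(B)$ as $W_{n-1}(B)$-modules, and the same ghost-Frobenius identity (now for $W_n(B) \to W_{n-1}(B)$) makes these $\omega_{(i+1)*}(B)$ as $W_n(A)$-modules, via the equal compositions $W_{n-1}(f) \circ F = F \circ W_n(f)$. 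Both sides are thus filtered by the same sequence of graded pieces.

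Next, the natural map $g: W_n(B) \otimes_{W_n(A), F} W_{n-1}(A) \to W_{n-1}(B)$, $w \otimes a \mapsto F(w) \cdot W_{n-1}(f)(a)$, respects the Verschiebung filtrations since $W_{n-1}(f)$ preserves $V_i$. A short calculation using the projection formula $V_i(y) \cdot z = V_i(y \cdot F^i(z))$ identifies the induced map on the $i$-th graded piece with the isomorphism $w \otimes a \mapsto \omega_{i+1}(w) \cdot f(a)$ from Theorem~\ref{thm:wittdescent}: expanding $F(w) \cdot V_i(f(a)) \pmod{V_{i+1}}$ gives $f(a) \cdot \omega_{i+1}(w)$. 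Applying the five-lemma inductively along the filtration, using flatness to keep the short exact sequences $0 \to V_{i+1} \to V_i \to V_i/V_{i+1} \to 0$ exact after tensoring, then delivers the isomorphism. The main technical bookkeeping is the aforementioned projection-formula computation matching the induced graded-piece map with Theorem~\ref{thm:wittdescent}'s isomorphism; everything else is standard filtration arithmetic.
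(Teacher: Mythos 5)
Your proof is correct and follows essentially the same route as the paper: dévissage along the Verschiebung filtration, using the flatness of $W_n(B)$ over $W_n(A)$ and the graded-piece isomorphisms $\omega_{i*}(B) \simeq W_n(B)\otimes_{W_n(A)}\omega_{i*}(A)$ from Theorem~\ref{thm:wittdescent}, with the identity $\omega_i\circ F=\omega_{i+1}$ shifting the module structures. The paper packages this as an induction on $n$ peeling off one Verschiebung layer at a time, whereas you run the full filtration with the five lemma and make explicit the projection-formula check that the natural map induces the theorem's isomorphism on graded pieces (a step the paper leaves as ``it then directly follows''), but the underlying argument is the same.
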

\begin{proof}
We proceed via induction on $n$; the base case where $n=1$ asserts the isomorphism $W_1(B) \otimes_{W_1(A), F} A$, which reduces to the second part of Theorem~\ref{thm:wittdescent} by noticing that $F$ and $w_1$ coincide as maps from $W_1(A)$ to $A$. Now, consider the exact sequence of $W_{n+1}(A)$-modules:
\[\begin{tikzcd}
0 \arrow[r] & V_{n}(A) \arrow[r] & W_{n}(A) \arrow[r, "\text{pr}_n"] & W_{n-1}(A) \arrow[r] & 0
\end{tikzcd}\]
The action on the middle module is via $F: W_{n+1}(A) \to W_{n}(A)$, and the action on the right module is via $F \circ \text{pr}_{n+1} = \text{pr}_{n} \circ F$. The induced action on the ideal $V_n(A)$ is by $\omega_n \circ F = \omega_{n+1}$, so we identify $V_n(A) \simeq \omega_{n+1*}(A)$ as $W_{n+1}(A)$-modules. Now, we may tensor the sequence above with the flat $W_{n+1}(A)$-module $W_{n+1}(B)$ to obtain the exact sequence:
\[\begin{tikzcd}
0 \arrow[r] & V_{n}(B) \arrow[r] & W_{n+1}(B) \otimes_{W_{n+1}(A), F} W_{n}(A) \arrow[r, "\text{pr}_n"] & W_{n-1}(B) \arrow[r] & 0
\end{tikzcd}\]
It then directly follows that the natural map $W_{n+1}(B) \otimes_{W_{n+1}(A), F} W_{n}(A) \to W_{n}(B)$ is an isomorphism.
\end{proof}
The relative Frobenius condition appearing in Theorem~\ref{thm:wittdescent} is not usually seen in the literature. As one can see through the proof of Theorem~\ref{thm:wittdescent2}, the main obstruction to proving that $W_n$ preserves flat algebras is showing the claim $w_{n*}(A) \otimes_{W_{n}(A)} W_n(B) \simeq w_{n*}(B)$. When $p=0$ in $A$, this simply reduces to the relative Frobenius condition. Our proof of Theorem~\ref{thm:wittdescent} actually shows that the claim is independent of proving flatness, mostly due to the following lemma.
\begin{lem}\label{lem:nilflatiso}
Let $f: M \to N$ be a morphism of $R$-modules, with $N$ being flat. Let $S$ be an $R$-algebra satisfying Condition (*). If $f \otimes_R S$ is an isomorphism, then so is $f$.
\end{lem}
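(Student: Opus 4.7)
The plan is to apply Lemma~\ref{lem:boundkercoker} directly to the morphism $f : M \to N$. That lemma is tailor-made for exactly this kind of descent question: it bounds the annihilators of $\ker(f)$ and $\coker(f)$ in terms of the annihilators of $\ker(f_S)$, $\coker(f_S)$, and $\tor{1}{R}{S}{N}$, whenever the base change $R \to S$ satisfies condition~(*). So the strategy is simply to feed the hypotheses of our lemma into those bounds and observe that everything collapses.

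First I would handle the cokernel. By Lemma~\ref{lem:boundkercoker}(i),
\[\text{Ann}_R(\coker(f_S))^{m} \subset \text{Ann}_R(\coker(f)).\]
Since $f_S$ is an isomorphism, $\coker(f_S) = 0$, so $\text{Ann}_R(\coker(f_S)) = R$ and therefore $\text{Ann}_R(\coker(f)) = R$, i.e.\ $\coker(f) = 0$.

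Next I would bound the kernel using Lemma~\ref{lem:boundkercoker}(ii):
\[(\text{Ann}_R(\ker(f_S)) \cdot \text{Ann}_R(\tor{1}{R}{S}{N}) \cdot \text{Ann}_R(\coker(f)))^{m} \subset \text{Ann}_R(\ker(f)).\]
Each of the three factors on the left is all of $R$: $\text{Ann}_R(\ker(f_S)) = R$ because $f_S$ is an isomorphism; $\text{Ann}_R(\tor{1}{R}{S}{N}) = R$ because $N$ is flat, so $\tor{1}{R}{S}{N} = 0$; and $\text{Ann}_R(\coker(f)) = R$ by the previous step. So $\text{Ann}_R(\ker(f)) = R$, hence $\ker(f) = 0$, and $f$ is an isomorphism.

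There is no real obstacle here beyond bookkeeping: the entire content of the statement is already packaged into Lemma~\ref{lem:boundkercoker}, provided one notices that flatness of $N$ is exactly what kills the Tor term which is the only `extra' annihilator appearing in the kernel bound that is not automatically $R$. The only point worth emphasizing in writing this up is the need for $N$ (rather than $M$) to be the flat module, because it is $\tor{1}{R}{S}{N}$ (and not $\tor{1}{R}{S}{M}$) that features in~(ii) of the cited lemma.
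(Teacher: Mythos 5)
Your argument is correct: both annihilator bounds of Lemma~\ref{lem:boundkercoker} apply, the hypothesis that $f\otimes_R S$ is an isomorphism makes $\text{Ann}_R(\coker(f_S))$ and $\text{Ann}_R(\ker(f_S))$ equal to all of $R$, flatness of the target $N$ kills $\tor{1}{R}{S}{N}$, and the order of the two steps (cokernel first, then kernel) is exactly what is needed since the kernel bound consumes $\text{Ann}_R(\coker(f))$. The paper proves the lemma differently, by a short self-contained argument: it first observes that condition (*) forces $X\otimes_R S\simeq 0\Rightarrow X\simeq 0$, deduces $\coker(f)=0$ from right exactness of $-\otimes_R S$, and then tensors the resulting short exact sequence $0\to\ker(f)\to M\to N\to 0$ with $S$, using $\tor{1}{R}{N}{S}=0$ (flatness of $N$) to keep it left exact and conclude $\ker(f)\otimes_R S\simeq 0$, hence $\ker(f)=0$. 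The two routes rest on the same two inputs\textemdash condition (*) and the vanishing of the Tor term attached to $N$\textemdash and your version simply delegates the homological bookkeeping to the already-proven quantitative lemma, which is a perfectly good trade: it is shorter, at the cost of invoking the heavier spectral-sequence machinery behind Lemma~\ref{lem:boundkercoker}, whereas the paper's argument only needs the elementary Tor long exact sequence. One small point to flag in a write-up: Lemma~\ref{lem:boundkercoker} is stated for a morphism of almost algebras $A\to B$ satisfying (*) in the descent section, while the present lemma speaks of an $R$-algebra $S$; the proof of the bound is purely formal and holds verbatim in either category (and the lemma is in any case applied to almost algebras in Theorem~\ref{thm:wittdescent}), but you should say this explicitly when citing it across settings.
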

\begin{proof}
We begin with the following fact: if an $R$-module $M$ satisfies $M\otimes_R S \simeq 0$ then $M \simeq 0$. Indeed, one obtains that $R=\text{Ann}_R(M \otimes_R S)^m \subseteq \text{Ann}_R(M) \subseteq R$ for some $m \in \mathbb{N}_{>0}$, so $M \simeq 0$.\\
\indent Using this fact, in combination with the right exactness of tensoring, one concludes that $\coker(f) \simeq 0$, or in other words, that $f$ is an epimorphism. Thus, we have a short exact sequence $0 \to \ker(f) \to M \to N \to 0$. Applying $-\otimes_R S$ and using Tor sequences, we see that $0 \to \ker(f) \otimes_R S \to M\otimes_R S \to N \otimes_R S \to 0$ is exact, so we know that $\ker(f) \otimes_R S \simeq 0$. Using the fact at the beginning, one concludes that $\ker(f) \simeq 0$ and, thusly, that $f$ is an isomorphism. 
\end{proof}
\begin{lem}\label{lem:annwitt}
Suppose that $\Phi_{B_0/A_0}$ is an epimorphism and $B$ is flat over $A$. Then, $I_A \otimes_{W_n(A)} W_n(B) \to I_B$ is an epimorphism.\footnote{Recall that $I_R:=\ker(W_n(R) \to \overline{W_n}(R))$, and note that $I_R \simeq \text{Ann}_{R}(p^n)$} \footnote{The author would like to thank Dr James Borger for helping write their initial proof in a more readable way.}
\end{lem}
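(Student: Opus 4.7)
The plan is to identify the $W_n(A)$-module structure on $I_A$, reduce the claim to showing that the natural map $A \otimes_{W_n(A), \omega_n} W_n(B) \to B$ is an epimorphism in the almost category, and then to prove this by a Nakayama-style iteration exploiting $\ideal{m}^2 = \ideal{m}$.

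First, under the isomorphism $V^n : \text{Ann}_A(p^n) \xrightarrow{\sim} I_A$ sending $a$ to the Witt vector $(0,\ldots,0,a)$, the Witt identity $V^n(a) \cdot w = V^n(a \cdot F^n(w)) = V^n(a \cdot \omega_n(w))$ shows that the $W_n(A)$-action on $I_A$ factors through $\omega_n : W_n(A) \to A$. Hence there is a canonical isomorphism
\[
I_A \otimes_{W_n(A)} W_n(B) \simeq \text{Ann}_A(p^n) \otimes_A \bigl(A \otimes_{W_n(A), \omega_n} W_n(B)\bigr),
\]
and flatness of $B$ over $A$ identifies $\text{Ann}_A(p^n) \otimes_A B \simeq \text{Ann}_B(p^n) \simeq I_B$ (kernels of multiplication by $p^n$ commute with flat base-change). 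Tracing through the definitions, the map in the statement factors through $\text{id}_{\text{Ann}_A(p^n)} \otimes (\omega_n \otimes 1)$ followed by this flatness identification, so it suffices to show $A \otimes_{W_n(A), \omega_n} W_n(B) \to B$ is an epimorphism.

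Let $M := A \cdot \omega_n(W_n(B)) \subseteq B$ be its image. Evaluating $\omega_n$ on $(0,\ldots,0,b)$ gives $p^n b$, so $p^n B \subseteq M$; evaluating on $(b,0,\ldots,0)$ gives $b^{p^n}$, so $A \cdot B^{p^n} \subseteq M$. The hypothesis that $\Phi_{B_0/A_0}$ is an epimorphism, iterated $n$ times by composing relative Frobenii (each iteration is a base-change of the previous and hence preserves epimorphisms), yields $\ideal{m} B_0 \subseteq A_0 \cdot B_0^{p^n}$, which lifts to $\ideal{m} B \subseteq M + pB$.

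The conclusion then follows by a Nakayama-style iteration using $\ideal{m}^2 = \ideal{m}$ and $\ideal{m} M \subseteq M$: assuming inductively that $\ideal{m} B \subseteq M + p^k B$, one computes
\[
\ideal{m} B = \ideal{m}^2 B \subseteq \ideal{m} M + p^k \cdot \ideal{m} B \subseteq M + p^k(M + pB) \subseteq M + p^{k+1} B,
\]
so $\ideal{m} B \subseteq M + p^n B \subseteq M$, and hence $B/M$ is almost zero. The main subtle point is this Nakayama step, which relies crucially on Condition (B) through $\ideal{m}^2 = \ideal{m}$; without idempotency one would only control successively larger powers of $\ideal{m}$ at the cost of a finer iteration scheme.
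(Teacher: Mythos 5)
Your proof is correct and takes a genuinely different route from the paper's. The opening reduction is essentially parallel: you identify the $W_n(A)$-action on $I_A \simeq \text{Ann}_A(p^n)$ as factoring through $\omega_n$, use flatness of $B$ to identify $\text{Ann}_A(p^n) \otimes_A B$ with $I_B$, and reduce to showing the image $M$ of $A \otimes_{W_n(A), \omega_n} W_n(B) \to B$ almost equals $B$; the paper instead works with the cruder surjection $\phi_n\colon A \otimes_{\mathbb{Z}^a} W_n(B) \to B$, because its inductive argument wants a uniform ambient tensor across Witt lengths. The main step is where you diverge. The paper proves $\phi_n$ is an epi by induction on $n$: it tensors the exact sequence $W_{n-1}(B) \xrightarrow{V} W_n(B) \to B \to 0$ with $A$, invokes $w_n \circ V = p\, w_{n-1}$ for the left square, and concludes by a diagram chase in which the relative Frobenius supplies the right-hand vertical epimorphism. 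You bypass the induction and the diagram chase entirely: the two containments $p^n B \subseteq M$ (from $\omega_n \circ V^n = p^n$) and $A \cdot B^{p^n} \subseteq M$ (from $\omega_n$ on Teichm\"uller lifts) leave a gap of $pB$ after applying the iterated relative Frobenius, and your Nakayama iteration using $\ideal{m}^2 = \ideal{m}$ closes it. Both proofs ultimately rest on the same two facts about $M$, and both need the passage from ``$\Phi_{B_0/A_0}$ is an epi'' to ``$\Phi^n_{B_0/A_0}$ is an epi'' (via base-change and composition); you make this explicit, while the paper absorbs it silently into ``the assumption on the relative Frobenius''. Your version is more elementary and self-contained, avoiding the auxiliary tensor over $\mathbb{Z}^a$ and the four-lemma, and it isolates very cleanly how idempotency of $\ideal{m}$ drives the conclusion; the paper's inductive framing reads more uniformly against the surrounding section, where induction on Witt length is the dominant organizing device.
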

\begin{proof}
Note that we have an isomorphism $\text{Ann}_{A}(p^n) \otimes_A B \stackrel{\text{ev}}{\to} \text{Ann}_B(p^n)$ as $B$ is flat over $A$; hence $I_A \otimes_A B \simeq I_B$. As $I_A \otimes_A-$is right exact, it suffices to show that $w_{n*}(A) \otimes_{W_n(A)} W_n(B) \to B$ is an epimorphism of $A$-modules. \\
\indent Define a bifunctor by the rule $- \otimes W_n(-) := ((-)_* \otimes_{\mathbb{Z}} W((-)_*))^a$. Consider the morphism $\phi_{n}: A \otimes W_n(B)) \to B$ which takes $a \otimes b \mapsto aw_n(b)$ for almost elements  $a \in A_*, b \in W_n(B_*)$. We have a factoring $A \otimes W_n(B) \twoheadrightarrow w_{n*}(A) \otimes_{W_n(A)} W_n(B) \to B $ and therefore it suffices to show that $\phi_n$ is an epimorphism.\\
\indent We proceed by induction on $n$; the case when $n=0$ is tautological.  Consider the diagram with exact rows:
\[\begin{tikzcd}
 A \otimes W_{n-1}(B) \arrow[r, "\text{id}_A\otimes V"] \arrow[d, "\phi_{n-1}"]& A \otimes W_n(B) \arrow[r] \arrow[d, "\phi_{n}"] & B \otimes A \arrow[r] \arrow[d, "b \otimes a \mapsto b^{p^n} a"]& 0\\
B \arrow[r, "p"] & B \arrow[r] & B/pB \arrow[r] & 0
\end{tikzcd}\]
Commutativity of the left-hand square follows from the identity $w_n \circ V = pw_{n-1}$. The left-down arrow is an epimorphism by induction, and the right-down arrow is an epimorphism by the assumption on the relative Frobenius. We then conclude the middle-down arrow is an epimorphism, as desired.
\end{proof}
\begin{proof}[Proof of Theorem~\ref{thm:wittdescent}]
Recall the gluing diagram:
\begin{equation}\label{eq:wittdescent}\begin{tikzcd}
\overline{W_n}(A) \arrow[r, "\text{pr}_n'"] \arrow[d, "\omega'_n"] & W_{n-1}(A)\arrow[d, "\overline{\omega_n}"] \\
A \arrow[r, twoheadrightarrow, "\pi"] & A/p^n
\end{tikzcd}\end{equation}
Let $I_A := \ker(\alpha_n)$, which may be characterised as the $W_n(A)$-module $\text{Ann}_{\omega_{n*}(A)}(p^n)$. As an ideal of $W_n(A)$, one may check that $I_A^2=0$.\\
We will prove the statement that $W_{n}(B)$ is flat over $W_n(A)$ and $\omega_{n*}(A) \otimes_{W_n(A)} W_n(B) \simeq \omega_{n*}(B)$ via induction on $n$; the base case when $n=0$ is tautological. Assume the statement is true for $k=n-1$, we will show it then for $n$.\\

\noindent\textit{Claim:} $(W_{n-1}(B), B)$ is a gluing datum for Diagram~\ref{eq:wittdescent}.\\
We must show that $W_{n-1}(B) \otimes_{W_{n-1}(A)} \overline{\omega_n}_{*}(A/p^n) \to \overline{\omega_n}_{*}(B/p^n)$ is an isomorphism. Since $B/p^n$ is a flat $A/p^n$-module, and $p$ is nilpotent in $A/p^n$, we are in the situation of Lemma~\ref{lem:nilflatiso} (see Lemma~\ref{lem:nilpotent}) where we reduce to showing the isomorphism after tensoring with $- \otimes_{\mathbb{Z}} \mathbb{F}_p$. Thus, we need to show that $W_{n-1}(B) \otimes_{W_{n-1}(A)} \overline{\omega_n}_{*}(A_0) \simeq B_0$. The map $W_{n-1}(A) \stackrel{\overline{\omega_n}}{\to} A_0$ factors via $W_{n-1}(A) \stackrel{\omega_{n-1}}{\to} A \stackrel{\pi}{\to} A_0 \stackrel{\Phi_{A_0}}{\to}A_0$, so we have:
\begin{align*}W_{n-1}(B) \otimes_{W_{n-1}(A)} \overline{\omega_n}_{*}(A_0) &\simeq (W_{n-1}(B) \otimes_{W_{n-1}(A)} \omega_{n-1*}(A)) \otimes_{A} \overline{\pi}_{*}(A_0) \otimes_{A_0} \Phi_{A_0*}(A_0)\\ &\simeq \omega_{n-1*}(B_0) \otimes_{A_0} \Phi_{A_0*}(A_0)
\simeq \overline{\omega_{n}}_*(B_0),
\end{align*}
where the last isomorphism followed from our hypothesis on $\Phi_{B_0/A_0}$.
$\Box$\\

Hence, according to our claim, setting $A_1 := W_{n-1}(A), A_2 := A, A_3 := A/p^n$ and $\xi: W_{n-1}(B) \otimes_{W_{n-1}(A)} A/p^n \simeq B \otimes_{A} A/p^n$ in Equation~\ref{eq:descenteq}, we can form the $\overline{W}_n(A)$-module $T(W_{n-1}(B), B, \xi) = \overline{W_n}(B)$. From Lemma~\ref{lem:descprop2} we immediately derive that $\overline{W_n}(B) \otimes_{\overline{W_n}(A)}w'_{n*}(A) \simeq w'_{n*}(B)$ and that $\overline{W}_n(B)$ is a flat $\overline{W}_n(A)$-module.\\
\indent By utilising Lemma~\ref{lem:annwitt}, we obtain that $I_A W_n(B) \simeq I_B$. Therefore, $W_n(B)\otimes_{W_n(A)} \overline{W_n}(A) \simeq \overline{W_n}(B)$. Since $\omega_n = \alpha_n \circ \omega'_n$, we have:
\[W_n(B) \otimes_{W_n(A)} \omega_{n*}(A) \simeq (W_n(B) \otimes_{W_n(A)} \overline{W_n}(A)) \otimes_{\overline{W_n}(A)} \omega'_{n*}(A) \simeq \omega_{n*}(B).\]
Lastly, to verify that $W_n(B)$ is flat over $W_n(A)$, we appeal to Corollary~\ref{cor:almlocflat}, where it suffices to show that $I_A \otimes_{W_n(A)} W_n(B) \simeq I_B$, or indeed that 
\[\text{Ann}_{\omega_{n*}(A)}(p^n) \otimes_{W_n(A)} W_n(B) \simeq \text{Ann}_{\omega_{n*}(B)}(p^n).\]
But of course, since $I_A^2 = 0$, it is an $\overline{W_n}(A)$-module, so we simply must check that:
\[\text{Ann}_{\omega'_{n*}(A)}(p^n) \otimes_{\overline{W_n}(A)} \overline{W_n}(B) \simeq \text{Ann}_{\omega'_{n*}(B)}(p^n)\]
which follows from the flatness of $\overline{W_n}(B)$ over $\overline{W_n}(A)$. This completes the induction step. We are only left to show that $\omega_{i*}(A) \otimes_{W_n(A)} W_n(B) \simeq \omega_{i*}(B)$. But this follows from the isomorphism $\omega_{i*}(A) \otimes_{W_n(A)} W_n(B) \simeq \omega_{i*}(A) \otimes_{W_{i}(A)} W_{i}(B) $ and induction.
\end{proof}
\begin{thm}\label{thm:wittdescent2}
Assuming the conditions of Theorem~\ref{thm:wittdescent}, if we also suppose that $A \to B$ is almost finitely presented, almost finitely generated, or almost projective, then so is $W_n(A) \to W_n(B)$.
\end{thm}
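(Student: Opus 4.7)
The plan is an induction on $n$, with base case $n=1$ being the hypothesis (since $W_1(A)=A$). For the inductive step, I factor the passage from $W_{n-1}$ to $W_n$ through the intermediate ring $\overline{W_n}$, following the template of the proof of Theorem~\ref{thm:wittdescent}.

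\emph{First step: descend the property to $\overline{W_n}(A)\to\overline{W_n}(B)$.} The proof of Theorem~\ref{thm:wittdescent} shows that $(W_{n-1}(B),B,\xi)$ is a gluing datum for the square~(\ref{eq:wittdescent}), that $\overline{W_n}(B)\simeq T(W_{n-1}(B),B,\xi)$ is flat over $\overline{W_n}(A)$, and that there are isomorphisms $\overline{W_n}(B)\otimes_{\overline{W_n}(A)}W_{n-1}(A)\simeq W_{n-1}(B)$ and $\overline{W_n}(B)\otimes_{\overline{W_n}(A)}A\simeq B$; in particular $\overline{W_n}(B)\otimes_{\overline{W_n}(A)}(W_{n-1}(A)\times A)\simeq W_{n-1}(B)\times B$. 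By the induction hypothesis and by the hypothesis on $A\to B$, each factor has the property under consideration; since modules over a finite product of rings decompose componentwise, so does $W_{n-1}(B)\times B$ over $W_{n-1}(A)\times A$. The ring map $\overline{W_n}(A)\to W_{n-1}(A)\times A$ satisfies condition~$(*)$ by Lemma~\ref{lem:flatdesc}(i). Theorem~\ref{thm:descent1}(i) now covers almost finite generation, and Theorem~\ref{thm:descent1}(ii) covers almost finite presentation (the required $\mathrm{Tor}_1$-vanishing follows from the flatness of $\overline{W_n}(B)$). The almost projective case I would handle instead by Theorem~\ref{thm:descent2}: the realisation of $\overline{W_n}(B)$ as $T$ of a gluing datum whose two constituents are almost projective places it in the essential image of the $2$-cartesian equivalence on almost projective modules.

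\emph{Second step: lift the property from $\overline{W_n}(A)\to\overline{W_n}(B)$ to $W_n(A)\to W_n(B)$.} The projection $W_n(A)\to\overline{W_n}(A)$ is finite with square-zero kernel $I_A$, so condition~$(*)$ holds by Lemma~\ref{lem:nilpotent}. Theorem~\ref{thm:wittdescent} supplies flatness of $W_n(B)$ over $W_n(A)$, and together with Lemma~\ref{lem:annwitt} the isomorphism $W_n(B)\otimes_{W_n(A)}\overline{W_n}(A)\simeq\overline{W_n}(B)$. Parts (i) and (ii) of Theorem~\ref{thm:descent1} then yield the almost finitely generated and almost finitely presented cases respectively (with the needed $\mathrm{Tor}_1$-vanishing automatic from flatness), and Corollary~\ref{cor:nildescent} yields the almost projective case, since $W_n(A)\to\overline{W_n}(A)$ is an epimorphism with nilpotent kernel and $W_n(B)$ is flat.

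The main technical point, and in my view the only one not entirely mechanical, is the correct identification of $\overline{W_n}(B)$ as the gluing output in the almost projective case, so that Theorem~\ref{thm:descent2} applies. This is precisely what Lemmas~\ref{lem:descprop1} and \ref{lem:descprop2} are for, once the flatness and $\mathrm{Tor}_1$-vanishing provided by Theorem~\ref{thm:wittdescent} are in hand. Beyond this, the argument is a formal two-stage assembly of the descent machinery: gluing for the $\overline{W_n}$ stage, and square-zero lifting for the $W_n$ stage.
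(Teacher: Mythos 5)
Your proposal is correct and follows essentially the same route as the paper: the paper's proof is exactly the two-stage reduction you describe, citing Theorem~\ref{thm:descent1} and Corollary~\ref{cor:nildescent} (with condition (*) from the square-zero kernel and flatness from Theorem~\ref{thm:wittdescent}) to pass between $W_n$ and $\overline{W_n}$, and Theorem~\ref{thm:descent2} with Corollary~\ref{cor:descent3} for the gluing stage down to $W_{n-1}$ and $A$, concluding by induction. The only difference is presentational: you build the property up from the gluing datum, while the paper states it as a chain of reductions, and you inline the product-of-rings descent that the paper delegates to Corollary~\ref{cor:descent3}.
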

\begin{proof}
We know that the map $W_n(A) \twoheadrightarrow \overline{W_n}(A)$ is an epimorphism with nilpotent kernel and thus satisfies Condition (*) in Theorem~\ref{thm:descent1}. By utilising Theorem~\ref{thm:wittdescent} together with Theorem~\ref{thm:descent1} and Corollary~\ref{cor:nildescent}, we reduce the statement to $\overline{W_n}(B)$ over $\overline{W_n}(A)$. By utilising Equation~\ref{eq:wittdescent}, Theorem~\ref{thm:wittdescent}, Theorem~\ref{thm:descent2} its Corollary~\ref{cor:descent3}, we reduce the statement to $W_{n-1}(B)$ as a $W_{n-1}(A)$-module, then by induction to $n=0$ where that statement is tautological.
\end{proof}
Finally, we get:
\begin{cor}\label{cor:wittmain}
If $A \to B$ is weakly \'{e}tale (resp. \'{e}tale, almost finite \'{e}tale, almost finitely presented \'{e}tale), then $W_n(A) \to W_n(B)$ is weakly \'{e}tale (resp. \'{e}tale, almost finite \'{e}tale, almost finitely presented \'{e}tale).
\end{cor}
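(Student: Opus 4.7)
The plan is to reduce each of the four assertions to results already in hand, namely the corollary immediately following Theorem~\ref{thm:wittdescent} (weakly \'{e}tale is preserved), Theorem~\ref{thm:wittdescent2} (almost finite / almost finitely presented / almost projective are preserved once one has the relative Frobenius hypothesis), and Theorem~\ref{thm:frobpushout} (which supplies that hypothesis automatically in the weakly \'{e}tale setting). The weakly \'{e}tale case is immediate from the first of these. The remaining cases are obtained by applying Theorem~\ref{thm:wittdescent2} either to $f$ itself, or to the multiplication map $\mu_{B/A}$.

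For the \'{e}tale case, I would first observe that $f$ weakly \'{e}tale already gives $W_n(f)$ weakly \'{e}tale, so it remains to upgrade weak unramifiedness to unramifiedness, i.e.\ to show that $W_n(B)$ is almost projective over $W_n(B) \otimes_{W_n(A)} W_n(B)$. The corollary after Theorem~\ref{thm:wittdescent} identifies the latter with $W_n(B \otimes_A B)$, so it suffices to apply the Witt functor to $\mu_{B/A} \colon B \otimes_A B \to B$ and check the hypotheses of Theorem~\ref{thm:wittdescent2}. Since $f$ is \'{e}tale, $\mu_{B/A}$ makes $B$ an almost projective $B \otimes_A B$-module, and almost projective modules are flat (by the corollary following Lemma~\ref{lem:almprojsplit}); hence $\mu_{B/A}$ is flat and almost projective. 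Moreover $\mu_{B/A}$ is itself weakly \'{e}tale (this is the unramified half of \'{e}taleness combined with flatness just noted), so Theorem~\ref{thm:frobpushout} applied to $\mu_{B/A}$ in characteristic $p$ yields that the corresponding relative Frobenius $\Phi_{B_0/(B\otimes_A B)_0}$ is an isomorphism. All the hypotheses of Theorem~\ref{thm:wittdescent2} are thus met, and it delivers the almost projectivity of $W_n(\mu_{B/A})$, which is what we wanted.

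For the almost finite \'{e}tale and almost finitely presented \'{e}tale cases, I would combine the \'{e}tale case just established with the direct application of Theorem~\ref{thm:wittdescent2} to $f$ itself: the hypotheses of Theorem~\ref{thm:wittdescent} are in force because $f$ is in particular weakly \'{e}tale, so Theorem~\ref{thm:frobpushout} gives $\Phi_{B_0/A_0}$ an isomorphism, and Theorem~\ref{thm:wittdescent2} then propagates the almost finitely generated (resp.\ almost finitely presented) hypothesis on $f$ to $W_n(f)$. Together with the \'{e}tale conclusion already proved, this yields the two remaining assertions.

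The only nontrivial input is the verification that $\mu_{B/A}$ satisfies the full hypothesis list of Theorem~\ref{thm:wittdescent2}; once one has noticed that \'{e}taleness of $f$ forces $\mu_{B/A}$ to itself be weakly \'{e}tale (so that Theorem~\ref{thm:frobpushout} supplies the relative Frobenius isomorphism), everything else is a bookkeeping exercise in chaining the preservation theorems. The conceptual obstacle, therefore, is not in this corollary but in the earlier machinery it rests on; here we are essentially harvesting consequences.
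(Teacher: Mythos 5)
Your proposal is correct and follows essentially the same route as the paper: reduce the finiteness conditions to Theorem~\ref{thm:wittdescent2}, identify $W_n(B) \otimes_{W_n(A)} W_n(B) \simeq W_n(B \otimes_A B)$ via the corollary after Theorem~\ref{thm:wittdescent}, and apply the Witt-vector preservation theorems to $\mu_{B/A}$ to upgrade weak unramifiedness to unramifiedness. Your explicit check that $\mu_{B/A}$ is weakly \'{e}tale, so that Theorem~\ref{thm:frobpushout} supplies the relative Frobenius isomorphism required by Theorem~\ref{thm:wittdescent2}, is a detail the paper leaves implicit.
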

\begin{proof}
By Theorem~\ref{thm:wittdescent2}, it is sufficient to show this for when $A \to B$ is (weakly) \'{e}tale. All we need to show is that $W_n(B)$ is (weakly) unramified over $W_n(A)$, which is to say, for the multiplication map $\mu_{W_n(B)/W_n(A)}: W_n(B) \otimes_{W_n(A)} W_n(B) \to W_n(B)$, $W_n(B)$ is required to be an (flat) almost projective $W_n(B) \otimes_{W_n(A)} W_n(B)$-module. But this is clear, because we know that $W_n(B) \otimes_{W_n(A)} W_n(B) \simeq W_n(B\otimes_A B)$, and $B \otimes_A B \to B$ is (flat) almost projective, so Theorem~\ref{thm:wittdescent2} gives the result.
\end{proof}
We end this section with a modest application. In what follows, we will use the adjective `almost' for a property of modules or algebras in the usual category to be a statement for their almostification. We say a ring $A$ is Witt perfect if $F: W_n(A) \to W_{n-1}(A)$ is surjective. 
\begin{cor}
If $A \to B$ is (almost) weakly \'{e}tale and $A$ is Witt perfect, then $B$ is almost Witt perfect. In the situation when $\ideal{m} = (p^{1/p^{\infty}})$, if $B$ is further assumed to be integrally closed in $B[1/p]$, then $B$ is Witt perfect.
\end{cor}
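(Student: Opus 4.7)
The plan is to invoke Corollary~\ref{cor:wittpushoutfrob} for the first assertion and to upgrade almost surjectivity to genuine surjectivity via the integral closure hypothesis for the second.

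For the first assertion, the hypotheses of Corollary~\ref{cor:wittpushoutfrob} (flatness of $A\to B$ and the relative Frobenius $\Phi_{B_0/A_0}$ being an isomorphism) are supplied by weak \'etaleness: flatness is part of the definition, while Theorem~\ref{thm:frobpushout} gives the Frobenius pushout. The corollary then yields the isomorphism
\[ W_n(B) \otimes_{W_n(A),\, F} W_{n-1}(A) \simeq W_{n-1}(B) \]
of almost $W_{n-1}(A)$-modules. Since $A$ is Witt perfect, $F\colon W_n(A) \to W_{n-1}(A)$ is surjective, so right exactness of $W_n(B) \otimes_{W_n(A)} -$ makes the natural map $W_n(B) \to W_n(B) \otimes_{W_n(A),\, F} W_{n-1}(A)$, $b \mapsto b \otimes 1$, (almost) surjective. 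Chasing through the pushout identification shows this composite is exactly $F\colon W_n(B)\to W_{n-1}(B)$, so $B$ is almost Witt perfect.

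For the second assertion, the plan is to promote almost surjectivity to genuine surjectivity. First, integral closure of $B$ in $B[1/p]$ forces $B$ to be $p$-torsion free, and a standard Witt-vector induction (using $p$-torsion freeness to solve for successive components) reduces the task to showing that the absolute Frobenius $\phi\colon B/pB\to B/pB$ is surjective. Fix $x \in B$. I would apply almost Witt perfectness at $n=2$ with $\epsilon = p^{1/p} \in \ideal{m}$ to obtain $y_0,y_1\in B$ satisfying $y_0^p + py_1 = p^{1/p} x$. Setting $z_0 := y_0/p^{1/p^2} \in B[1/p]$, one computes $z_0^p = x - p^{(p-1)/p}y_1 \in B$, so $z_0$ is a root of a monic polynomial over $B$ and hence lies in $B$ by integral closure. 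Iterating the procedure with $y_1$ in place of $x$ produces $z_1 \in B$ with $z_1^p = y_1 - p^{(p-1)/p} u$ for some $u \in B$. The combination $Z := z_0 + p^{(p-1)/p^2} z_1 \in B$, expanded via the binomial theorem (whose mixed terms each contain an extra factor of $p$ from the binomial coefficient $\binom{p}{k}$, $1 \le k \le p-1$), reduces to
\[ Z^p \equiv z_0^p + p^{(p-1)/p}z_1^p = x - p^{(p-1)/p} y_1 + p^{(p-1)/p}\bigl(y_1 - p^{(p-1)/p} u\bigr) = x - p^{2(p-1)/p} u \pmod{pB}, \]
and the inequality $2(p-1)/p \geq 1$ forces $p^{2(p-1)/p} u \in pB$, giving $Z^p \equiv x \pmod{pB}$.

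The hardest step is the two-stage iteration in Part~2: the fractional exponents must be tuned so that (i) $z_0$ and $z_1$ are integral over $B$ and therefore recovered in $B$ via the integral closure hypothesis, and (ii) after precisely two iterations the residual error is absorbed into $pB$ (the exponent $(p-1)/p^2$ in the definition of $Z$ is forced by this requirement). The reduction from Witt perfectness to Frobenius-mod-$p$ surjectivity is routine but essential, and it is here that $p$-torsion freeness — supplied by the integral closure hypothesis — is needed.
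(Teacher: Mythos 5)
Your first paragraph is exactly the paper's argument (the paper simply says the first part ``follows directly from Corollary~\ref{cor:wittpushoutfrob}''), and your closing computation producing $Z\in B$ with $Z^p\equiv x \pmod{pB}$ is correct and is essentially the paper's calculation in a mild variant form: the paper combines the two almost-liftings \emph{before} extracting a root, using $\epsilon=p^{1/p^2}$ and $p^{1/p}$ and one appeal to integral closedness, whereas you extract roots twice with $\epsilon=p^{1/p}$ and then recombine; both are fine.

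The gap is the reduction step you label routine: ``a standard Witt-vector induction (using $p$-torsion freeness to solve for successive components) reduces the task to showing that the absolute Frobenius $\phi\colon B/pB\to B/pB$ is surjective.'' This is false as a general claim about $p$-torsion-free rings integrally closed in $B[1/p]$. Take $B=\mathbb{Z}_p$: it is $p$-torsion free, integrally closed in $\mathbb{Z}_p[1/p]=\mathbb{Q}_p$, and Frobenius on $\mathbb{Z}_p/p\simeq\mathbb{F}_p$ is surjective, yet $F\colon W_3(\mathbb{Z}_p)\to W_2(\mathbb{Z}_p)$ is not surjective (the element $(0,1)$ has no preimage: solving the ghost equations forces $p\mid x_0$, and then the second ghost component is divisible by $p^2$ while the target requires it to equal $p$ modulo $p^2$). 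So no induction using only $p$-torsion-freeness, integral closedness and mod-$p$ Frobenius surjectivity can close the argument; surjectivity of $F\colon W_{n}\to W_{n-1}$ for all $n$ genuinely requires more than the $n=1$ case in general. The paper handles this by citing the Davis--Kedlaya result \cite{kedlaya}, whose criterion involves, beyond $p$-th roots modulo $pB$, an additional condition of the type ``every element of $pB$ is a $p$-th power modulo $p^2B$''; in the present situation that extra condition is available precisely because $B$ contains compatible $p$-power roots of $p$ (if $b\equiv y^p \pmod{pB}$ then $pb\equiv (p^{1/p}y)^p \pmod{p^2B}$), so the reduction to the mod-$p$ statement is legitimate but is a theorem, not a formal d\'evissage. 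Your proposal neither invokes that theorem nor verifies the extra condition, so as written the second assertion is not proved.
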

\begin{proof}
The first part follows directly from Corollary~\ref{cor:wittpushoutfrob}. It now suffices to show the second part. Assuming that $\ideal{m} = (p^{1/p^{\infty}})$, and that $B$ is integrally closed in $B[1/p]$, we must show that $F: W_n(B) \to W_{n-1}(B)$ is surjective. At the very least, we know that it is almost surjective, i.e. the cokernel is killed by $(p^{1/p^{\infty}})$. We now refer to \cite{kedlaya}, where proving the surjectivity of $F: W_{n}(B) \to W_{n-1}(B)$ is equivalent to doing it when $n=1$. Set $c=1-1/p$. For any $b \in B$, we may write $p^{1/p^2}b = b_0^p + pb_1$, and similarly $p^{1/p}b_1 = b_2^p + pb_3$, giving 
\[p^{1/p^2}b = b_0^p +p^{c}b_2^p + p^{1 + c}b_3 = x_0'^{p} + p^{1+c/p}x_1\]
where $x_0' = b_0 + p^{c/p}b_2$ and $x_1 \in B$. We then obtain that $x_0=x_0'/p^{1/p^3}$ solves the polynomial:
\[X^p - p^{1 + \frac{c}{p}-\frac{1}{p^2}}x_1 - b \in B[X].\]
Since $B$ is integrally closed in $B[1/p]$, we then obtain that $x_0 \in B$, and that $b-x_0^p \in pB$.
\end{proof}
\begin{rem}
\begin{enumerate}[(i)]
\item One can check that the condition that $A \to B$ is almost weakly \'{e}tale in the previous corollary could be relaxed to flat and $\phi_{B_0/A_0}$ being an almost isomorphism; see Theorem~\ref{thm:wittdescent}. 
\item It would be interesting to know whether the previous corollary could be specialised to perfectoid rings, the latter notion for our purposes being that the projection $\lim_{F} W_{n}(A) \to A$ has principal kernel in addition to $A$ being Witt-perfect.
\end{enumerate}
\end{rem}
\subsection{Almost purity}
We first deal with almost purity in characteristic $p$ following \cite{gabram}. Before getting into the discussion, let us consider a ring $R$, a non-zero divisor $t$ such that $\ideal{m} = (t^{1/p^{\infty}}) \subset R$. Note that $\ideal{m}^2 = \ideal{m}$, for the reason that $\ideal{m} \subset \ideal{m}^p$, and in this case, $\ideal{m}$ is flat as it is a colimit of free modules i.e. $\ideal{m} = \inj_{i} t^{1/p^{i}} R$ with the coresponding transition maps. Thus, $\tilde{\ideal{m}} \simeq \ideal{m}$, and we note that for an almost $R$-module $M^a$ such that $M$ is $t$-torsion-free:
\begin{align*}(M^a)_{*} = \hom{R}{\ideal{m}}{M} = \hom{R}{\inj_{i} t^{1/p^{i}}R}{M} &= \lim_i \hom{R}{t^{1/p^{i}}R}{M} \\
&= \{m \in M[1/t]: t^{1/p^i}m \in M, \forall i \ge 0\}.
\end{align*}

\begin{thm}[Theorem 3.5.28, Gabber-Ramero]\label{thm:almpurcharp}
Let $R$ be a perfect $\mathbb{F}_p$-algebra, and fix an almost set-up $\ideal{m} = (t^{1/p^{\infty}})$. We set $R^{a}\text{-f\'{E}t}$ as the category of almost finite presented \'{e}tale $R^a$-algebras. The functor $F$:
\[R^{a}\text{-f\'{E}t} \to R[1/t]\text{-f\'{E}t}: \; \; \; A \to A_{*}[1/t]\]
is an equivalence of categories.
\end{thm}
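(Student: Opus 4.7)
The plan is to exhibit an explicit quasi-inverse to $F$. Well-definedness of $F$ is the easier direction: because $\ideal{m} = (t^{1/p^{\infty}})$, the computation immediately preceding the theorem shows that $A_*$ is $t$-torsion-free, so $A_* \hookrightarrow A_*[1/t]$; inverting $t$ also inverts all of $\ideal{m}$, hence the functor $(-)[1/t]\colon R^a\text{-Mod} \to R[1/t]\text{-Mod}$ sends almost finitely presented, almost projective, unramified structures to their honest counterparts. So $A_*[1/t]$ is finite \'{e}tale over $R[1/t]$.

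For the quasi-inverse $G$, given $S \in R[1/t]\text{-f\'{e}t}$, I let $A$ be the integral closure of $R$ in $S$ and set $G(S) := A^a$. The decisive input is perfectness: Frobenius on $R[1/t]$ is bijective, and by Theorem~\ref{thm:frobpushout} applied to the weakly \'{e}tale morphism $R[1/t] \to S$, Frobenius on $S$ is bijective as well. Taking $p$-th roots of the monic integrality relations (using perfectness of $R$ on coefficients) shows that $A$ is itself a perfect $R$-subring of $S$, as is $A \otimes_R A$.

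The main technical step is showing $A^a \in R^a\text{-f\'{e}t}$. The diagonal idempotent $e_S \in S \otimes_{R[1/t]} S$ of the proposition preceding Theorem~\ref{thm:frobpushout} can be rescaled: choose $M \geq 0$ and $\tilde{e} \in A \otimes_R A$ with image $t^M e_S$. Because $A \otimes_R A$ is perfect and $e_S$ is idempotent (so $e_S^{1/p^k} = e_S$), the unique $p^k$-th root of $\tilde{e}$ lives in $A \otimes_R A$ and maps to $t^{M/p^k} e_S$. Since the family $\{t^{M/p^k}\}_{k \geq 0}$ generates $\ideal{m}$, these lifts assemble into an almost element $e_{A/R} \in (A \otimes_R A)^a_*$, which one verifies satisfies the three axioms of the idempotent characterisation of unramifiedness. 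A parallel clearing-plus-$p^k$-rooting procedure applied to an $R[1/t]$-basis of $S$ and its trace dual produces, for each $\epsilon \in \ideal{m}$, a factorisation of $\epsilon \cdot 1_A$ through a finite free $R$-module, giving almost finite generation together with the splittings required by Lemma~\ref{lem:almprojsplit}, hence almost projectivity (and so flatness by its corollary). Theorem~\ref{thm:presented}(i) then upgrades almost finitely generated projective to almost finitely presented, completing $A^a \in R^a\text{-f\'{e}t}$.

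The equivalence is closed off by $F \circ G(S) = A_*[1/t] = A[1/t] = S$ --- the last equality because module generators of $S$ over $R[1/t]$ can be rescaled by a power of $t$ into $A$, and $A_*/A$ is $t$-power torsion --- and by $G \circ F(A) \simeq A$, which follows by showing that every element of $A_*[1/t]$ integral over $R$ is an almost element of $A_*$, again by the perfect $p^k$-root procedure applied to $t$-denominators. The main obstacle is the third paragraph above: everything hinges on exchanging a fixed integer denominator $t^M$ for the arbitrarily small fractional denominators $t^{M/p^k}$ whose common generated ideal is precisely $\ideal{m}$. Perfectness of $R$ is exactly what licenses this exchange; without it, what would be a genuine obstruction to \'{e}taleness remains an obstruction, whereas here it dissolves in the almost category.
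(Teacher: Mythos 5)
The paper itself does not prove this theorem: it defers to Gabber--Ramero (Theorem 3.5.28), remarking only that coperfection and Frobenius make the characteristic-$p$ case tractable. Your sketch follows exactly that standard route --- integral closure (equivalently, the coperfection of a finite subalgebra, which it almost equals) as the quasi-inverse, the rescaled diagonal idempotent with its unique $p^k$-th roots supplied by perfectness, and trace maps to produce the splittings of Lemma~\ref{lem:almprojsplit}, upgraded to almost finite presentation by Theorem~\ref{thm:presented}(i). So the strategy is the right one and matches the cited argument in spirit.

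Three places are asserted where the real work lives. First, when you ``assemble'' the roots $\tilde{e}^{1/p^k}$ into an almost element of $(A\otimes_R A)^a$: the lifts are only well defined up to the kernel of $A\otimes_R A \to S\otimes_{R[1/t]}S$, i.e.\ up to $t$-power torsion, so the putative map $\ideal{m}\to A\otimes_R A$ is not obviously well defined. You need the observation that in a perfect (hence reduced) ring, $t^N x=0$ forces $t^{N/p^m}x=0$ for all $m$, so $t$-power torsion is almost zero; this is where perfectness enters a second time and it is exactly what makes the assembly and the verification of the three idempotent axioms legitimate. Second, the trace step: $S$ need not be free over $R[1/t]$, so work with the representation $t^{M/p^k}e_S=\sum_i a_i\otimes b_i$ with $a_i,b_i\in A$ rather than a basis; moreover $\mathrm{Tr}_{S/R[1/t]}$ of an element of $A$ lands only in the integral closure of $R$ in $R[1/t]$, which is merely \emph{almost} equal to $R$ (again by extracting $p$-power roots and using reducedness), so the factorisation of $\epsilon\cdot 1_A$ costs an extra $t^{1/p^m}$ --- harmless, but it must be accounted for. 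Third, for $G\circ F\simeq\mathrm{id}$ you check only one containment (elements of $A_*[1/t]$ integral over $R$ lie almost in $A_*$, which indeed follows from almost surjectivity of Frobenius on $A_*$ via Theorem~\ref{thm:frobpushout} plus reducedness); you also need the reverse containment, that $\ideal{m}A_*$ lies in the integral closure of $R$ in $A_*[1/t]$. This follows from almost finite generation by the determinant trick: choosing a finitely generated $M_\epsilon\subseteq A_*$ with $\epsilon A_*\subseteq M_\epsilon$, the element $\epsilon x$ stabilises the faithful finitely generated module $R\cdot 1+M_\epsilon$, hence is integral. With these points inserted, your argument is complete and agrees with the proof the paper cites.
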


We invite the reader to read the proof given in \cite{gabram}, upon which it should be clear that existence of coperfection in characteristic $p$, which helps to `almost' characterise the integral closure of $R$ in $B$, combined with the existence of Frobenius makes almost purity not too difficult in this context. Each of these do not exist in the characteristic 0, at least in the strict sense. One take-away from the work of Bhatt and Scholze in their paper on prismatic cohomology \cite{prism} is that coperfection should be replaced by a suitable `perfectoidisation' in characteristic $0$, where one ends up with a perfectoid ring instead of a perfect ring. The Frobenius in characteristic $p$ could then be accessed in characteristic $0$ via Frobenius lifts and delta rings. All in all, the ideas in characteristic $p$ seem to, rather magically, apply more broadly.\\

Let us now prove the main theorem of this paper. Fix a perfectoid valuation ring $R$ of rank one with an element $\varpi$ admitting a compatible system of $p$-power roots.
\begin{thm}[Almost purity in Characteristic 0]\label{thm:almpurchar0}
Let $S$ be a finitely presented $R$-algebra, and suppose $S[1/p]$ is a finite \'{e}tale algebra over $R[1/p]$. Then $\cal{O}_S$, the integral closure of $R$ in $S[1/p]$, is almost finitely presented \'{e}tale over $(R,(\varpi^{1/p^{\infty}}))$.
\end{thm}
To proceed, we begin with a few lemmas. Recall that $\lim_{F} W_n(R) \twoheadrightarrow R$ has principally generated kernel~\cite[Definition 16.3.1]{gabram2}, which we denote by $\varepsilon$.
\begin{lem}\label{lem:absabh} Define $R^{\flat}:= \lim_{\Phi} R/p = (\lim_{F} W_{n}(R))/p$, and let $\overline{R^{\flat}}$ be the absolute integral closure of $R^{\flat}$. Then, $\overline{R^{\flat}}$ is almost weakly \'{e}tale over $R^{\flat}$ and $\overline{R}:=W(\overline{R^{\flat}})/\varepsilon$ is absolutely integrally closed over $R$. Hence, $\overline{R}/p^n$ is almost weakly \'{e}tale over $R/p^n$ for each $n \ge 1$.
\end{lem}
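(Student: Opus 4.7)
The strategy is to first establish the almost weak étale claim for $\overline{R^{\flat}}$ over $R^{\flat}$ using the characteristic $p$ almost purity theorem (Theorem~\ref{thm:almpurcharp}), then transport this fact through the Witt vector functor using Corollary~\ref{cor:wittmain} and a base-change argument along $\varepsilon$ to deduce the statement for $\overline{R}/p^n$ over $R/p^n$. The absolute integral closure statement for $\overline{R}$ over $R$ should follow from the tilting correspondence between finite étale extensions in characteristic zero and characteristic $p$.

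First, since $R$ is a perfectoid ring with a non-discrete rank-one valuation, $R^{\flat}$ inherits a rank-one valuation, and there is a non-zero-divisor $t^{\flat} \in R^{\flat}$ corresponding to $p$ under tilting such that $\ideal{m}^{\flat} = (t^{\flat \, 1/p^{\infty}})$ is the relevant almost ideal. Since $R^{\flat}$ is perfect, its fraction field $K^{\flat} := R^{\flat}[1/t^{\flat}]$ is perfect of characteristic $p$, so every algebraic extension of $K^{\flat}$ is automatically separable, hence étale. Therefore $\overline{R^{\flat}}$ can be written as the filtered colimit $\colim_{\alpha} S_{\alpha}$ where each $S_{\alpha}$ is the integral closure of $R^{\flat}$ in a finite subextension of $\overline{K^{\flat}}/K^{\flat}$, and each $S_{\alpha}[1/t^{\flat}]$ is finite étale over $R^{\flat}[1/t^{\flat}]$. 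Applying Theorem~\ref{thm:almpurcharp} to each $S_{\alpha}$, we obtain that each $S_{\alpha}^a$ is almost finitely presented étale over $R^{\flat a}$, and Corollary~\ref{cor:colimwet} then yields that the filtered colimit $\overline{R^{\flat}}^a$ is weakly étale over $R^{\flat a}$.

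For the statement on $\overline{R}/p^n$: Corollary~\ref{cor:wittmain} applied to the almost weakly étale morphism $R^{\flat} \to \overline{R^{\flat}}$ (with the almost set-up on $W_n(R^{\flat})$ supplied by Theorem~\ref{thm:wittlift}) gives that $W_n(R^{\flat}) \to W_n(\overline{R^{\flat}})$ is almost weakly étale for every $n \geq 1$. Writing $\varepsilon_n$ for the image of $\varepsilon$ in $W_n(R^{\flat})$ (and in $W_n(\overline{R^{\flat}})$ via the natural map), we identify $R/p^n = W_n(R^{\flat})/\varepsilon_n$ and $\overline{R}/p^n = W_n(\overline{R^{\flat}})/\varepsilon_n$; here the second identification uses that $W_n$ is compatible with the quotient by a principal ideal generated by a non-zero-divisor, which is valid because the flatness provided by Theorem~\ref{thm:wittdescent} makes $\varepsilon_n$ remain a non-zero-divisor in $W_n(\overline{R^{\flat}})$. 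The morphism $R/p^n \to \overline{R}/p^n$ is then the base change of $W_n(R^{\flat}) \to W_n(\overline{R^{\flat}})$ along $W_n(R^{\flat}) \twoheadrightarrow R/p^n$, and Proposition~\ref{prop:assortlem}(i) preserves the almost weakly étale property under base change, completing this part.

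The hardest step is the absolute integral closure assertion for $\overline{R}$, since in characteristic zero one must contend with wild ramification above $p$, which is precisely what almost mathematics is designed to tame. My approach would be to use the perfectoid tilting correspondence: each finite extension $R \hookrightarrow S$ with $S[1/p]$ finite étale over $R[1/p]$ tilts to a finite étale extension of $R^{\flat}[1/t^{\flat}]$ (using Theorem~\ref{thm:almpurchar0} itself, giving a somewhat circular flavour that has to be handled by working instead with algebraic subextensions of $R$ in $\overline{R[1/p]}$ and checking they factor through $W(\overline{R^{\flat}})/\varepsilon$). Since $\overline{R^{\flat}}$ contains the integral closure of $R^{\flat}$ in the algebraic closure of $K^{\flat}$, and the Witt-vector/tilting formalism identifies perfectoid algebras over $R$ with perfectoid algebras over $R^{\flat}$, $W(\overline{R^{\flat}})/\varepsilon$ captures exactly the absolute integral closure in the perfectoid world. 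The main obstacle here is verifying that no finite integral extension of $R$ escapes the image of this Witt-vector construction, which reduces to checking that $\overline{R}$ is itself perfectoid (its tilt is $\overline{R^{\flat}}$) and then invoking the tilting equivalence for the perfectoid subcategory.
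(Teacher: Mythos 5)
Your first two steps track the paper's proof closely: the paper likewise obtains $\overline{R^{\flat}}$ as a filtered colimit of the integral closures in finite (automatically separable, since $K^{\flat}$ is perfect) extensions, applies Theorem~\ref{thm:almpurcharp} to each and Corollary~\ref{cor:colimwet} to the colimit, and then deduces the statement mod $p^n$ by applying Corollary~\ref{cor:wittmain} to $R^{\flat}\to\overline{R^{\flat}}$ and passing to the quotient by $\varepsilon$, i.e.\ base-changing along $W_n(R^{\flat})\twoheadrightarrow W_n(R^{\flat})/\varepsilon\simeq R/p^n$ exactly as you do via Proposition~\ref{prop:assortlem}(i). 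That part of your argument is fine.

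The genuine gap is the absolute integral closedness of $\overline{R}=W(\overline{R^{\flat}})/\varepsilon$ over $R$. Your sketch routes this through the tilting correspondence for finite \'{e}tale algebras and, as you yourself note, through Theorem~\ref{thm:almpurchar0}; but in this development the char-$0$ tilting equivalence is a \emph{consequence} of almost purity, and the present lemma is precisely an input to the proof of Theorem~\ref{thm:almpurchar0}, so the argument is circular and the promised fix (``working instead with algebraic subextensions and checking they factor through $W(\overline{R^{\flat}})/\varepsilon$'') is never carried out. What is needed here is a direct, non-\'{e}tale-theoretic argument, and this is how the paper handles it: it invokes \cite[Proposition 3.8]{perfectoid}, where one shows by hand that every monic polynomial over the untilt acquires a root, by successive approximation using the surjectivity of $W(\overline{R^{\flat}})\to\overline{R}$ together with the fact that $\overline{R^{\flat}}$ is absolutely integrally closed (a valuation-theoretic Newton-approximation argument, no almost purity required). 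Without supplying such an argument, or an equally independent proof that $\overline{R}$ is perfectoid with tilt $\overline{R^{\flat}}$ \emph{and} that untilting preserves absolute integral closedness, your proposal does not establish the second assertion of the lemma.
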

\begin{proof}
By Theorem~\ref{thm:almpurcharp}, we know that $\overline{R^{\flat}}$ is a colimit of almost weakly \'{e}tale algebras, and therefore is almost weakly \'{e}tale (Corollary~\ref{cor:colimwet}). For the second claim, we refer the reader to \cite[Proposition 3.8]{perfectoid}. For the third claim, we utilise Corollary~\ref{cor:wittmain} to conclude that $\overline{R}/p^n \simeq W_n(\overline{R^{\flat}})/\varepsilon$ is almost weakly \'{e}tale over $W_n(R^{\flat})/\varepsilon \simeq R/p^n$.
\end{proof}
\begin{lem}
$\cal{O}_S$ is uniformly almost finitely presented with uniform rank $[S[1/p]:R[1/p]]$.
\end{lem}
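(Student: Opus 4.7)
The plan is to exhibit, for each $\epsilon \in \ideal{m}$, a free $R$-submodule $N_{\epsilon} \subseteq \cal{O}_S$ of rank exactly $n := [S[1/p]:R[1/p]]$ with $\epsilon \cdot \cal{O}_S \subseteq N_{\epsilon}$. Given this, the inclusion $R^n \simeq N_{\epsilon} \hookrightarrow \cal{O}_S$ has zero kernel and cokernel killed by $\epsilon$, so that $\cal{O}_S$ lies in the uniform-structure entourage $E_{\cal{M}}(\epsilon)$ of $N_{\epsilon} \in \cal{M}_n(R^a)$; ranging $\epsilon$ over a cofinal family in $\ideal{m}$ places $\cal{O}_S$ in $\cal{U}_n(R^a)$, and the triviality of the kernel simultaneously provides almost finite presentation with the same uniform rank $n$.

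First I would set up the trace-pairing sandwich. Since $S[1/p]$ is finite \'{e}tale of rank $n$ over $R[1/p]$, the trace form $\text{Tr}_{S[1/p]/R[1/p]}$ is a perfect pairing. Pick any $R[1/p]$-basis $e_1, \ldots, e_n$ of $S[1/p]$ lying in $\cal{O}_S$ (possible after clearing denominators by powers of $p$) and let $M_0 := \sum_i R \cdot e_i$, a free rank-$n$ lattice, together with its trace dual $M_0^{\vee} := \{x \in S[1/p] : \text{Tr}(x M_0) \subseteq R\}$, itself free of rank $n$. Because $R$ is a valuation ring, hence integrally closed in $R[1/p]$, and products of elements of $\cal{O}_S$ have trace in $R$, we obtain the sandwich $M_0 \subseteq \cal{O}_S \subseteq M_0^{\vee}$. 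The quotient $M_0^{\vee}/M_0$ is killed by the discriminant $d := \det(\text{Tr}(e_i e_j)) \in R$, and the problem reduces to producing, for each $\epsilon \in \ideal{m}$, a basis whose discriminant $d_{\epsilon}$ satisfies $\epsilon \in d_{\epsilon} R$, equivalently $v(d_{\epsilon}) \le v(\epsilon)$ for the rank-one valuation $v$.

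The crucial input is the absolute integral closure $\overline{R}$ constructed in the previous lemma. Since $\overline{R}[1/p]$ is absolutely integrally closed, $\overline{S}[1/p] := S[1/p] \otimes_{R[1/p]} \overline{R}[1/p]$ decomposes as $\overline{R}[1/p]^n$, and its integral closure over $\overline{R}$ equals the free module $\overline{R}^n$ with standard basis $f_1, \ldots, f_n$ of discriminant $1$. The almost weakly \'{e}tale structure of $R/p^k \to \overline{R}/p^k$ supplied by the previous lemma makes the natural map $\cal{O}_S \otimes_R \overline{R} \to \cal{O}_{\overline{S}}$ an almost isomorphism after reducing modulo each power of $p$. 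Using this, for every $\epsilon \in \ideal{m}$ one should be able to produce almost elements $e_j^{(\epsilon)} \in (\cal{O}_S)_{*}$ with $f_j - e_j^{(\epsilon)} \in \epsilon \cdot \cal{O}_{\overline{S}}$. By multilinearity of the determinant the discriminant of the new basis then differs from $1$ by a factor of valuation bounded linearly in $v(\epsilon)$, driving $v(d_{\epsilon})$ arbitrarily close to $0$ and yielding the desired $N_{\epsilon}$.

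The principal obstacle is precisely this descent-and-approximation step above: transporting the triviality of the discriminant in $\overline{R}$ down to an $\epsilon$-effective almost triviality in $R$. This is where the characteristic-$p$ almost purity (Theorem~\ref{thm:almpurcharp}), via its consequences for $\overline{R}$ packaged in the previous lemma, does the decisive work. The rank-one non-discrete valuation hypothesis is essential, since density of the value group of $v$ near $0$ is precisely what allows an arbitrarily small positive bound on $v(d_{\epsilon})$ to substitute for $d_{\epsilon}$ being a genuine unit, and hence to produce the sandwich estimate $\epsilon \cal{O}_S \subseteq N_{\epsilon}$ invoked in the opening reduction.
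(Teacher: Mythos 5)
Your trace-pairing sandwich $M_0 \subseteq \cal{O}_S \subseteq M_0^{\vee}$ and the reduction to showing that the discriminant of an integral basis can be made of arbitrarily small valuation are both correct, and this matches the broad architecture of the Gabber-Ramero argument that the paper defers to. But the pivotal step has a gap that cannot be repaired as written. You assert that the almost weakly \'{e}tale structure of $R/p^k \to \overline{R}/p^k$ "makes the natural map $\cal{O}_S \otimes_R \overline{R} \to \cal{O}_{\overline{S}}$ an almost isomorphism." This does not follow from weak \'{e}taleness (formation of integral closure is not compatible with flat base change in general, almost or otherwise), and more to the point it is essentially equivalent to the statement being proved: if that map were an almost isomorphism, then $\cal{O}_S \otimes_R \overline{R}$ would be almost free of rank $n$, and faithfully flat descent along $R \to \overline{R}$ would immediately give that $\cal{O}_S$ is almost finitely generated projective of rank $n$, rendering the discriminant argument superfluous. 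Worse, the concrete form of the step --- producing $e_j^{(\epsilon)} \in (\cal{O}_S)_{*}$ with $f_j - e_j^{(\epsilon)} \in \epsilon\,\cal{O}_{\overline{S}}$ --- is impossible whenever $S[1/p]/R[1/p]$ is a nonsplit extension: the idempotent basis $f_j$ of $\overline{R}^n$ lies outside the $\overline{R}$-span of $S[1/p]$'s rational structure, so no element of $(\cal{O}_S)_{*}\subset S[1/p]$ can be $\epsilon$-close to it, no matter how small $\epsilon$ is.

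The actual argument in the cited reference runs differently: one shows, using that a rank-one nondiscrete perfectoid valuation ring is \emph{deeply ramified}, that the different of $\cal{O}_S$ over $R$ is almost zero, whence the discriminant of a suitable basis has valuation in every $\ideal{m}$-neighbourhood of $0$; your sandwich then concludes. Your instinct that the nondiscrete rank-one hypothesis and the perfectoid structure drive the smallness of the discriminant is right, but that input must enter through control of the different, not through an approximation of an $\overline{R}$-basis by elements of $\cal{O}_S$.
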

\begin{proof}
See~\cite[Chap 6.3.6]{gabram}.
\end{proof}
\begin{proof}[Proof of Theorem~\ref{thm:almpurchar0}]
First, we know that $\cal{O}_S$ is a Pr\"{u}fer domain, so $\overline{R}$ is faithfully flat over $\cal{O}_S$ due to the fact that $\overline{R}$ is torsion-free and integral over $\cal{O}_S$.\footnote{For our purposes Pr\"{u}fer domain is a commutative domain whose localisations at all prime ideals is a valuation ring; see~\cite[Chapter VI, Proposition 8.7.9]{nicb}. One can then see that flat modules over a Pr\"{u}fer domain are equivalent to torsion-free ones by localising at the maximal ideals and using this characterisation for valuation rings.} Therefore, for each $n \ge 1$, we conclude that $\overline{R}/p^n$ is faithfully flat over $\cal{O}_S/p^n$. By (viii) of Proposition~\ref{prop:assortlem}, we get that $\cal{O}_S/p^n$ is almost weakly \'{e}tale over $R/p^n$ for each $n$, and combined with the previous lemma and Theorem~\ref{thm:presented}, we conclude that $\cal{O}_S/p^n$ is almost finitely presented \'{e}tale over $R/p^n$ for each $n\ge 1$, for which one obtains via \cite[Theorem 5.3.27]{gabram} that $\cal{O}_S$ is almost finitely presented \'{e}tale over $R$, as desired.
\end{proof}

\bibliography{bip}

\begin{thebibliography}{10}

\bibitem{prism}
B.~Bhatt and P.~Scholze.
\newblock Prisms and prismatic cohomology.
\newblock 2017.

\bibitem{borger2015basic}
J.~Borger.
\newblock The basic geometry of witt vectors, i: The affine case.
\newblock 2015.

\bibitem{nicb}
N.~Bourbaki.
\newblock {\em Commutative Algebra}.
\newblock Hermann, 1972.

\bibitem{gerd}
G.~Faltings.
\newblock $p$-adic hodge theory.
\newblock {\em J. Amer. Math. Soc}, (1):255--299, 1998.

\bibitem{gabram}
O.~Gabber and L.~Ramero.
\newblock Almost ring theory.
\newblock 2002.

\bibitem{gabram2}
O.~Gabber and L.~Ramero.
\newblock Foundations of almost ring theory.
\newblock 2020.

\bibitem{algebragel}
S.~I. Gelfand and Yu. Manin.
\newblock {\em Homological Algebra}.
\newblock Springer, 2009.

\bibitem{kedlaya}
K~S. Kedlaya and C.~Davis.
\newblock On the witt vector frobenius.
\newblock 2014.

\bibitem{perfectoid}
P.~Scholze.
\newblock Perfectoid spaces.
\newblock 2011.

\bibitem{stacks}
The {Stack Project Authors}.
\newblock \textit{Stacks Project}, 2018.

\bibitem{tatepdiv}
J.~Tate.
\newblock p-divisible groups.
\newblock {\em Inv. Math.}, 2:105--154, 1967.

\bibitem{weibel}
C.~Weibel.
\newblock {\em Homological Algebra}.
\newblock Cambridge University Press, 1994.

\end{thebibliography}
\bibliographystyle{plain}
\end{document}